\newcommand{\R}{{\mathbb R}}
\def\B{{\cal B}}
\def\E{{\cal E}}
\def\F{{\cal F}}
\def\H{{\cal H}}
\def\F{{\cal F}}
\def\V{{\cal V}}
\def\A{{\cal A}}
\def\e{{\cal E}}
\def\ci{\mbox{$C_0^\infty(E)$}}
\def\tt{\mbox{$(T_t)_{t > 0}$}}
\def\ttb{\mbox{$(\overline{T}_t)_{t > 0}$}}
\def\ga{\mbox{$(G_\alpha)_{\alpha > 0}$}}
\def\e{{\varepsilon}}
\newtheorem{defn}{{\it DEFINITION}}
\newtheorem{theo}[defn]{{\it THEOREM}}
\newtheorem{lem}[defn]{{\it LEMMA}}
\newtheorem{prop}[defn]{{\it PROPOSITION}}
\newtheorem{cor}[defn]{{\it COROLLARY}}
\newtheorem{rem}[defn]{{\it REMARK}}
\newtheorem{exam}[defn]{{\it EXAMPLE}}
\newenvironment{proof}{{\bf\it Proof }}{{\vskip 0.1cm \hfill$\Box$}}
\begin{document}
\noindent
{\Large\bf Recurrence criteria for generalized Dirichlet forms}

\bigskip
\noindent
{\bf Minjung Gim},
{\bf Gerald Trutnau}
\\

\noindent
{\small{\bf Abstract.} We develop sufficient analytic conditions for recurrence and transience of non-sectorial 
perturbations of possibly non-symmetric Dirichlet forms on a general state space. These form an important 
subclass of generalized Dirichlet forms which were introduced in \cite{St1}. In case there exists an associated process, 
we show how the analytic conditions imply recurrence and transience in the classical probabilistic sense. As an application,
we consider a generalized Dirichlet form given on a closed or open subset of $\mathbb{R}^d$ which is given as a divergence free first order perturbation 
of a non-symmetric energy form. Then using volume growth conditions of the sectorial and non-sectorial first order part, we derive an explicit criterion 
for recurrence. Moreover, we present concrete examples with applications to Muckenhoupt weights and counterexamples. The counterexamples 
show that the non-sectorial case differs qualitatively from the symmetric or non-symmetric sectorial case. Namely, we make the observation that 
one of the main criteria for recurrence in these cases fails to be true for generalized Dirichlet forms.} \\

\noindent
{Mathematics Subject Classification (2010): primary; 31C25, 47D07, 60G17; secondary: 60J60, 47B44, 60J35.}\\

\noindent 
{Key words: Dirichlet forms, recurrence, transience, Markov semigroups.}

\section{Introduction}\label{1}
Recurrence and transience of Markov processes, as well as related problems are important topics in probability theory. These were hence studied by many authors under various probabilistic and analytic aspects in discrete and in continuous time (see for instance 
\cite{Ba,BeCiR,FOT,F07,Ge,H,K,WaMaU,MT,No,O,O13,Pi,Sc,Stu1} and references therein).\\
Here, we take a rather analytic point of view which fits to the frame of possibly unbounded and discontinuous coefficients. The main purpose of this paper is to develop recurrence and transience criteria for (Markov processes $\mathbb{M}$ corresponding to) a generalized Dirichlet form which can be expressed as a linear perturbation of a sectorial Dirichlet form. More precisely, we consider a nice Hausdorff space $E$, 
a $\sigma$-finite measure $\mu$ on the Borel $\sigma$-algebra $\B(E)$ of $E$ and a generalized Dirichlet form $\E$ that can be decomposed as
\begin{equation}\label{in}
\E(u,v)=\E^0(u,v)+\int_E u Nv d\mu,
\end{equation}
where $(\E^0,D(\E^0))$ is a sectorial Dirichlet form on $L^2(E,\mu)$ that is dominated by $\E$ on a subspace of the diagonal of $D(\E^0)$ and $(N,D(N))$ is a linear operator on $L^2(E,\mu)$. The precise conditions are formulated in (H1)-(H3) of Section 2 below.\\
Here as a warning, we emphasize that we use the term \lq\lq sectorial\rq\rq\ exclusively in the sense of strong sector condition (cf. Remark \ref{titr}(a) and end of Remark 5(b)).\\
The class of generalized Dirichlet forms as in (\ref{in}) is quite large. It contains symmetric Dirichlet forms as in \cite{FOT}, sectorial Dirichlet forms as in \cite{MR} (and also \cite{O13}, if the dual semigroup is supposed to be sub-Markovian there) and time-dependent Dirichlet forms as in \cite{O04}. After having introduced the basic notions, for even more general forms as in ($\ref{in}$),  namely generalized Dirichlet forms satisfying (H1)-(H2), we derive some domination principle on the diagonal (see Theorem \ref{tit} and Remark \ref{titr}) and the existence of a nice reference function in case of transience (see Lemma \ref{gg}). Our main result for general forms as in (\ref{in}) is Theorem \ref{notran1} and its Corollary \ref{res} which constitute a generalization of the symmetric case of \cite{FOT} and of the sectorial case of \cite{O13}, if $(\widehat{T}_t)_{t>0}$ is sub-Markovian there (cf. Remark \ref{res2}).\\
Recurrence and transience are described through potential operators and the potential operators can be defined in an analytic way through an underlying $C_0$-semigroup of contractions as for instance in (\ref{dg}) below or in a probabilistic way where the potential operator is defined through an underlying Markov process $\mathbb{M}$ as at the beginning of Subsection \ref{2.2}. In Subsection \ref{2.2}, we follow the main lines of the well-known work \cite{Ge} to point out the connection of the analytic recurrence and transience to the probabilistic one. In particular, if the generalized Dirichlet form in (\ref{in}) is associated to a right process $\mathbb{M}$ as at the beginning of Subsection \ref{2.2}, i.e. if
$$
G_\alpha f=E.\; \Big [ \int_0^\infty e^{-\alpha t} f(X_t)dt \Big ] \ \mu\text{-a.e.}
$$
for any bounded $f\in L^2(E,\mu)$ and $\alpha>0$, then the analytic recurrence (resp. transience) of $\E$ can be described probabilistically as in Proposition \ref{rpro} (resp. Proposition \ref{tpro}). Moreover, if the transition function $(p_t)_{t> 0}$ of $\mathbb{M}$ is strong Feller, then the $\mu$-a.e. statements of Propositions \ref{tpro} and \ref{rpro} can be transformed into everywhere statements as explained at the end of Subsection \ref{2.2}. Thus we obtain pointwise recurrence as in the case of (H\"older) continuous or locally bounded coefficients (cf. for instance \cite{Ba}, \cite{Pi}) even though in our situation the coefficients may be discontinuous and unbounded. In general only the transition from $\mu$-a.e. to $\E$-quasi-everywhere statements is possible in Propositions \ref{tpro} and \ref{rpro} through standard Dirichlet form theory arguments.\\
As an application in Section \ref{3}, we consider an open or closed subset $E$ of $\R^d$ and adapting the arguments of \cite{St1} in particular to the case with reflection (cf. Lemma \ref{prop} and its proof in Section \ref{4}), we construct a generalized Dirichlet form on $L^2(E,\mu)$, $d\mu=\varphi dx$, $\varphi>0$ $dx$-a.e., that extends
\begin{equation}\label{efg}
\E(f,g)=\int_E \langle A\nabla f, \nabla g \rangle d\mu -\int_E \langle B,\nabla f\rangle gd\mu,
\end{equation}
where $A=(a_{ij})_{1\leq i,j \leq d}$ is a possibly non-symmetric matrix of locally $\mu$-integrable functions and $B:=(B_1,\ldots ,B_d)\in L^2_{loc}(E,\R^d,\mu)$ is $\mu$-divergence free (see (\ref{div1}) below). For the precise conditions, we refer to Section \ref{3}. In particular, we show that the form (\ref{efg}) fits into the frame of (\ref{in}) and we obtain first sufficient recurrence and transience criteria for (\ref{efg}) by applying the results of Subsection \ref{2.1} (cf. Corollary \ref{cor} and Remark \ref{cor2}). Then following a construction scheme of \cite{FOT} that we adapt to the non-sectorial case (cf. Lemmas \ref{con1} and \ref{con2}), we show that recurrence of $\E$ in (\ref{efg}) implies recurrence of its symmetric part (cf. Theorem \ref{rec3}) and conservativeness of $\E$ (cf. Corollary \ref{cons3}). For ease of exposition some proofs of Section \ref{3} are postponed to Section \ref{4}.\\
In Subsection \ref{3.1}, we derive explicit conditions for recurrence under the existence of a nice function $\rho$ (see beginning of Subsection \ref{3.1}) which always exists if $E$ is closed and so in particular if  $E=\R^d$. Our main result here is Theorem \ref{ee} that characterizes recurrence in terms of volume growth. It can be seen as a generalization of \cite[Theorem 3]{Stu1} in the Euclidean case. 
\\
In Subsection \ref{3.2} we present examples and counterexamples. The counterexamples show that the non-sectorial case differs from the symmetric and from the non-symmetric sectorial case. In order to explain the difference, we first recall the well-known sufficient conditions for recurrence in the sectorial case:\\
If $(\E^0,D(\E^0))$ is a symmetric Dirichlet form on $L^2(E,\mu)$, then the existence of $(\chi_n)_{n \geq 1}\subset D(\E^0)$ such that $0\leq \chi_n \leq 1$, $\lim_{n \rightarrow \infty}\chi_n =1$ $\mu$-a.e. and $\lim_{n \rightarrow \infty} \E^0(\chi_n,\chi_n)=0$ is an equivalent condition for (analytic) recurrence of $(\E^0,D(\E^0))$ (see \cite[Theorem 1.6.3]{FOT} and beginning of Subsection \ref{3.1}). In addition, if $(\E^0,D(\E^0))$ is a sectorial Dirichlet form and strictly irreducible, then the existence of  $(\chi_n)_{n \geq 1}\subset D(\E^0)$ such that $0\leq \chi_n \leq 1$, $\lim_{n \rightarrow \infty}\chi_n =1$ $\mu$-a.e. and $\lim_{n \rightarrow \infty} \E^0(\chi_n,\chi_n)=0$ is a sufficient condition for recurrence of $(\E^0,D(\E^0))$ (\cite[Theorem 1.3.9]{O13}). In Subsections \ref{exam1}, \ref{exam2}, we present several counterexamples of generalized Dirichlet forms as in (\ref{efg}) for which there exists $(\chi_n)_{n\geq 1}$ as above with
$$
\lim_{n \rightarrow \infty} \E(\chi_n,\chi_n)=0,
$$  
but $\E$ is not recurrent.  
In Subsection \ref{exam3} we discuss concrete examples in the case where the density $\varphi$ is in some Muckenhoupt class.\\
Section \ref{4} is as already mentioned devoted to the postponed proofs of Section \ref{3}.\\
\section{Analytic and probabilistic characterization of recurrence and transience}\label{2}
This section consists of two parts. In the first part, we characterize recurrence and transience analytically in the non-sectorial case and derive an analytic criterion for a generalized Dirichlet form to be recurrent or more generally non-transient.\\
In the second part, we show that the analytic characterization of recurrence and transience indeed implies recurrence and transience in the classical probabilistic sense in case there exists a process associated with the generalized Dirichlet form.
\subsection{Framework and a general criterion for recurrence and transience of a generalized Dirichlet form}\label{2.1}
Let $E$ be a Hausdorff topological space such that its Borel $\sigma$-algebra $\B(E)$ is generated by the set of all continuous functions on $E$ and let $\mu$ be a $\sigma$-finite positive measure on $\B(E)$. For $1\leq p<\infty$, let $L^p(E,\mu)$ be the space of equivalence classes of $p$-integrable functions with respect to $\mu$ and $L^\infty(E,\mu)$ be the space of $\mu$-essentially bounded functions. Let further $\H:=L^2(E,\mu)$ with inner product $(\ ,\ )$. The support of a function $u$ on $E$ (=support of $|u|d\mu$) is denoted by supp($u)$. For any set of functions $W$ on $E$, we will denote by $W_0$ the set of functions $u\in W$ which have a compact support in $E$ and by $W_b$ the set of functions in $W$ which are bounded $\mu$-a.e. and let $W_{0,b}:=W_0 \cap W_b$. Let $(\A,\V)$ be a Dirichlet form (not necessarily symmetric) on $\H$ in the sense of \cite[I. Definition 4.5]{MR}. So $\V$ is a real Hilbert space with respect to the norm $\|u\|^2_\V:=\A(u,u)+(u,u)$. Denote the dual space of $\V$ by $\V'$. Assume that there exists a linear operator $(\Lambda,D(\Lambda,\H))$ on $\H$ satisfying the following assumptions:\\
\centerline{}
\begin{itemize}
	\item[(H1)] $(\Lambda,D(\Lambda,\H))$ generates a sub-Markovian $C_0$-semigroup of contractions $(U_t)_{t>0}$ on $\H$ that can be restricted to a $C_0$-semigroup on $\V$.
\end{itemize}
\centerline{}
It follows by (H1) that the conditions (D1) and (D2) in \cite[Chapter I]{St2} are satisfied. So $\Lambda : D(\Lambda,\H)\cap \V \longrightarrow \V'$ is closable. Let $(\Lambda,\F)$ be the closure of $\Lambda : D(\Lambda,\H)\cap \V \longrightarrow \V'$. Then $\F$ is a real Hilbert space with corresponding norm
$$
\|u\|^2_\F:=\|u\|^2_\V+\|\Lambda u \|^2_{\V'}.
$$
Let $\E$ be the bilinear form associated with $(\A,\V$) and $(\Lambda,D(\Lambda,\H))$ (see \cite[I. Definition 2.9]{St2}). Then $\E$ is a generalized Dirichlet form (see \cite[I. Proposition 4.7]{St2}). In particular, for $u\in \F,\ v\in \V$, $\E$ can be written as
$$
\E(u,v)=\ \A(u,v)-{}_{\V'}\langle \Lambda u, v \rangle_\V.
$$
Let ($G_\alpha)_{\alpha>0}$ and ($\widehat{G}_\alpha)_{\alpha>0}$ on $\H$ be associated with $\E$, i.e. ($G_\alpha)_{\alpha>0}$ is the sub-Markovian $C_0$-resolvent of contractions on $\H$ satisfying $G_\alpha (\H)\subset \F$,
$$
\E_\alpha (G_\alpha f, g)=(f,g),\quad f\in \H,\ g\in \V,
$$
where $\E_\alpha(u,v):=\E(u,v)+\alpha(u,v)$ for $\alpha>0$ and $(\widehat{G}_\alpha)_{\alpha>0}$ is the adjoint $C_0$-resolvent of contractions of $\ga$ (see \cite[I. Proposition 3.6]{St2}). By \cite[I. Proposition 1.5]{MR}, there exists exactly one linear operator ($L,D(L))$ on $\H$ corresponding to $(G_\alpha)_{\alpha>0}$. Let $\tt$ and $(\widehat{T}_t)_{t>0}$ be $C_0$-semigroup of contractions corresponding to $\ga$ and $(\widehat{G}_\alpha)_{\alpha>0}$ respectively. Next, we assume\\
\begin{itemize}
	\item[(H2)]$(\widehat{T_t})_{t> 0} $ is sub-Markovian.\\
\end{itemize}
Then $(T_t)_{t> 0}$ restricted to ${L^1(E,\mu) \cap L^2(E,\mu)}$  can be extended to a semigroup of contractions on $L^1(E,\mu)$, which is actually equivalent to (H2). Since $(T_t)_{t> 0}$ is positivity preserving, so is its $L^1(E,\mu)$-version. Let $f \in L^1(E,\mu)$ with $f \geq 0 $. Then for $0\leq N\leq M$,
$$
0\leq \int_0^N T_t f dt \leq \int_0^M T_t f dt
$$
and for $0\leq \beta \leq \alpha$,
$$
0\leq \int_0^\infty e^{-\alpha t} T_t f dt\leq \int_0^\infty e^{-\beta t} T_t f dt.
$$
Hence
\begin{equation}\label{dg}
G f:=\lim_{N\rightarrow \infty} \int_0^N T_t f dt =\lim_{\alpha \rightarrow 0} \int_0 ^\infty e^{-\alpha t} T_t f dt \leq \infty
\end{equation}
is uniquely defined $\mu$-a.e. $G$ is called potential operator associated with $\tt$.\\
We do assume (H1) and (H2) throughout the whole Section \ref{2}.
\centerline{}
\centerline{}
\begin{defn}\label{rec1}$ $
\begin{itemize}
	\item[(a)] $\tt$ is said to be recurrent, if for any $f\in L^1(E,\mu)$ with $f\geq 0$ $\mu$-a.e., we have
\[
G f=0 \ \text{ or } \  \infty \ \mu\text{-a.e.}
	\]
	\item[(b)]   $\tt$ is said to be transient, if there exists $g\in L^1(E,\mu)$ with $g>0$ $\mu$-a.e. such that
\[
Gg<\infty \ \mu\text{-a.e.}
\]
	\item[(c)]  Likewise, we can define recurrence and transience of any operator which is a generator of a positivity preserving semigroup of contractions on $L^1(E,\mu)$.
\centerline{}
\end{itemize}
\end{defn}
\centerline{}
\begin{defn}$ $
\begin{itemize}
\item[(a)] A measurable set $B\in \B(E)$ is called weakly invariant set relative to $\tt$, if\\
\centerline{}
\centerline{ $\displaystyle T_t (f \cdot 1_B ) (x)=0$ for $\mu$-a.e. $x\in E\setminus B$}
\centerline{}
for any $t> 0,$ $f\in L^2(E,\mu)$.
\item[(b)] $\tt$  is said to be strictly irreducible, if for any weakly invariant set $B$ relative to \tt, we have $\mu(B)=0$ or $\mu(E\setminus B)=0$.
\end{itemize}
\end{defn}

\centerline{}
\begin{rem}\label{rec2} From \cite[Section 2]{K}, we deduce:
\begin{itemize}
\item[(a)] \tt $ $ is transient, if and only if $Gf<\infty$ $\mu$-a.e. for any $f\in L^1(E,\mu)$ with $f\geq 0$ $\mu$-a.e.
\item[(b)] If $g\in L^1(E,\mu)$ with $g>0$ $\mu$-a.e., then $\{ x\in E\,:\,  Gg(x)=\infty\}$ is a weakly invariant set relative to \tt. Consequently, if $\tt$ is strictly irreducible, then it is either transient or recurrent.
\item[(c)] If for some $t>0$, there exists a $\mu\otimes\mu$-a.e. strictly positive measurable function $(p_t(x,y))_{x,y\in E}$ with
$$
T_t f(x) = \int_E p_t(x,y) f(y) \mu(dy)
$$
for $\mu$-a.e. $x\in E$ and any $f\in L^2(E,\mu)$, then $\tt$ is strictly irreducible.
\item[(d)] In the symmetric case (cf. \cite{FOT}), $B\in \B(E)$ is weakly invariant, if and only if it is invariant in the sense of \cite[Chapter 1.1.6]{FOT}. Therefore, a symmetric Dirichlet form is irreducible if and only if it is strictly irreducible.
\end{itemize}
\end{rem}
\centerline{}
Now, we shall show that the transience of $\tt$ is determined by the symmetric part of the corresponding generalized Dirichlet form under some domination on the diagonal.\\
\begin{theo}\label{tit}
If there exists a sectorial Dirichlet form $(\E^0,D(\E^0))$ in the sense of \cite[I. Definition 4.5 and I. (2.4)]{MR} such that $D(L)_b \subset D(\E^0)$ and $\E^0(u,u)\leq \E(u,u)$ for any $u\in D(L)_b$, then the transience of  $(\E^0,D(\E^0))$ implies the transience of $\tt$.
\end{theo}
\begin{proof} 
If we assume $(\E^0,D(\E^0))$ is transient, then by Remark \ref{rec2}(a), there exists $g^0 \in L^1(E,\mu)_b$ with $g^0>0$ $\mu$-a.e. such that $G^0 g^0 <\infty$ $\mu$-a.e. where $G^0$ is the potential operator associated with $(\E^0,D(\E^0))$. Let $g:=g^0 / \max(G^0 g^0,1)$. Then $g\in L^1(E,\mu)_b$ and $\int_E g G^0 g d\mu <\infty$. According to \cite[Theorem 1.3.9]{O13}, there exists a constant $K_g>0$ depending on $g$ and the sector constant of ($\E^0,D(\E^0))$ such that
\begin{equation}\label{leq1}
\int_E |u|g d\mu \leq K_g \sqrt{\E^0 (u,u)}
\end{equation}
for any $u\in D(\E^0)$. Choosing $u$ to be $G_\alpha g \in D(L)_b$ in (\ref{leq1}) where $\alpha>0$, we get
$$
\int_E g G_\alpha g d\mu \leq K_g \sqrt{\E^0(G_\alpha g,G_\alpha g)} \leq K_g \sqrt{ \E(G_\alpha g,G_\alpha g)} \leq K_g \sqrt{\int_E g G_\alpha g d\mu}.
$$
Therefore $\int_E g G_\alpha g d\mu \leq K^2_g$ for any $\alpha>0$, and it follows by B. Levi's theorem that
$$
\int_E g Gg d\mu \leq K^2_g.
$$
Consequently, $G g<\infty$ $\mu$-a.e.
\end{proof}
\centerline{}
\begin{rem}\label{titr}
\begin{itemize}
\item[(a)] In this article the term "sectorial" is exclusively meant in the sense of satisfying condition (2.4) of \cite[Chapter I. 2]{MR}.
\item[(b)] Let $(\E^0,D(\E^0))$ be a sectorial Dirichlet form on $\H$. Then its symmetric part $(\widetilde{\E}^0,D(\E^0))$ is a symmetric Dirichlet form on $\H$ $($see \cite[I. Exercise 4.6]{MR}$)$. By Theorem \ref{tit}, we obtain: a sectorial Dirichlet form $(\E^0,D(\E^0))$ is transient, if and only if $(\widetilde{\E}^0,D(\E^0))$ is transient. Note that if $(\E^0,D(\E^0))$ is only assumed to satisfy the weak sector condition as in  
\cite[(2.3) of Chapter I]{MR}, then its symmetric part may be recurrent, while $(\E^0,D(\E^0))$ is not. This can be seen in Example \ref{exam22} below.
\end{itemize}
\end{rem}
\centerline{}
Since \tt $ $ is a sub-Markovian $C_0$-semigroup of contractions on $ L^2(E,\mu)$, it can be defined on $L^\infty(E,\mu)$. Let $f\in L^\infty(E,\mu)$, with $f\geq 0$ $\mu$-a.e. and choose an increasing sequence of non-negative functions $(f_n)_{n \geq 1} \subset L^2 (E,\mu)_b$ converging to $f$ $\mu$-a.e. Then for any $t> 0$,
$$
T_t f:= \lim_{n \rightarrow \infty} T_t f_n
$$
exists $\mu$-a.e. independently of choice of functions $(f_n)_{n \geq 1}$. Furthermore for any $t,s > 0$,\\
\centerline{}
\centerline{$T_t T_s f =T_t (\lim_{n \rightarrow \infty} T_s f_n)= \lim_{n \rightarrow \infty}T_t T_s f_n =\lim_{n \rightarrow \infty} T_{t+s}f_n =T_{t+s}f, \ \mu$-a.e.}\\
\centerline{}
Consequently, \tt$ $ can be considered as a sub-Markovian semigroup of contractions on $L^\infty(E,\mu)$. The potential operator $G$ relative to \tt$ $ can be regarded as an operator on $L^\infty(E,\mu)$.\\
Using an idea from \cite[Theorem 15]{Sc} about invariant sets of discrete semigroups in the proof of the next lemma, we show that $g$ and $Gg$ in Definition \ref{rec1}(b) can be chosen $\mu$-uniformly bounded.
\centerline{}
\begin{lem}\label{gg}
If $(T_t)_{t> 0}$ is transient, then there exists a function $g\in L^1(E,\mu)_b$ with $g>0$ $\mu$-a.e. and $G g\in L^\infty(E, \mu)$.
\end{lem}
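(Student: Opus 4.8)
The plan is to start from the reference function supplied by transience and to regularize it through a level-set decomposition of its potential, using a maximum principle to keep the potential of each piece under control. First, since $\tt$ is transient, Definition \ref{rec1}(b) provides $g_0\in L^1(E,\mu)$ with $g_0>0$ $\mu$-a.e. and $Gg_0<\infty$ $\mu$-a.e.; replacing $g_0$ by $g_0\wedge 1$ (which only decreases $Gg_0$ by positivity of $G$ and keeps it strictly positive) I may assume $0<g_0\le 1$, so that $g_0\in L^1(E,\mu)_b\cap L^2(E,\mu)$. Set $u:=Gg_0$ and, for $n\ge 1$, $A_n:=\{n-1\le u<n\}$. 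Since $u<\infty$ $\mu$-a.e., the sets $(A_n)_{n\ge 1}$ form a measurable partition of $E$ up to a $\mu$-null set, and I define
$$g:=\sum_{n\ge 1}2^{-n}\,g_0\,1_{A_n},$$
so that $0<g\le g_0$ $\mu$-a.e.; in particular $g\in L^1(E,\mu)_b$ and $g>0$ $\mu$-a.e., as required.

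The heart of the argument is the bound $G(g_0\,1_{A_n})\le n$ $\mu$-a.e. for each $n$. On $\{g_0\,1_{A_n}>0\}=A_n$ (up to a null set) one has $G(g_0\,1_{A_n})\le Gg_0=u<n$, i.e. the potential of the charge $g_0\,1_{A_n}$ does not exceed $n$ on the support of that charge. I would upgrade this to an everywhere bound by a maximum principle: as $(G_\alpha)_{\alpha>0}$ is a sub-Markovian resolvent, it satisfies the complete maximum principle, so from $G_\alpha(g_0\,1_{A_n})\le Gg_0<n$ on $\{g_0\,1_{A_n}>0\}$ one deduces $G_\alpha(g_0\,1_{A_n})\le n$ everywhere, and letting $\alpha\downarrow 0$ with $G_\alpha(g_0\,1_{A_n})\uparrow G(g_0\,1_{A_n})$ gives $G(g_0\,1_{A_n})\le n$ $\mu$-a.e. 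Granting this, B. Levi's theorem yields
$$Gg=\sum_{n\ge 1}2^{-n}\,G(g_0\,1_{A_n})\le \sum_{n\ge 1}2^{-n}n=2\quad\mu\text{-a.e.},$$
whence $Gg\in L^\infty(E,\mu)$ and the proof is complete.

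The main obstacle is precisely this maximum principle, i.e. controlling the potential of $g_0\,1_{A_n}$ away from the support of the charge, where $u$ itself may be arbitrarily large. This is exactly where the cited idea of \cite[Theorem 15]{Sc} enters: instead of invoking the complete maximum principle for the resolvent, one argues directly that the excess set $\{G(g_0\,1_{A_n})>n\}$ is (weakly) invariant for the associated discrete semigroup and carries no charge of $g_0\,1_{A_n}$, which forces it to be $\mu$-negligible. The remaining points are routine and I would only check them briefly: the truncation preserving both $g_0>0$ and $Gg_0<\infty$, the measurability and the a.e.\ partition property of the $A_n$, the membership $g_0\,1_{A_n}\in L^2(E,\mu)$ (guaranteed by $0<g_0\le 1$ together with $g_0\in L^1(E,\mu)$) which makes the resolvent manipulations licit, and the monotone passage to the limit in the definition of $G$.
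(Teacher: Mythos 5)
Your overall scheme (level sets $A_n$ of the potential, geometric weights, and the bound $G(g_0 1_{A_n})\le n$ $\mu$-a.e.) would indeed prove the lemma, and it is in fact close in spirit to the paper's proof, which uses the very same level sets $A_m=\{m-1\le Gf<m\}$. But the step on which everything rests --- upgrading $G(g_0 1_{A_n})\le n$ from the charge set $A_n$ to all of $E$ by the ``complete maximum principle for sub-Markovian resolvents'' --- is a genuine gap. Under (H1)--(H2) one only has an a.e.-defined sub-Markovian $C_0$-semigroup on $L^2(E,\mu)$: there is no associated process, no kernel representation, and no symmetry or sector condition (Section \ref{2.1} is deliberately process-free, and non-sectoriality is the whole point of the paper). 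Every classical proof of the domination/complete maximum principle uses one of these missing structures: either the strong Markov property with fine continuity (writing $G\phi(x)=E_x[\,G\phi(X_{\sigma_A})1_{\{\sigma_A<\infty\}}]$ for the hitting time of the charge set and applying optional sampling), or balayage/r\'eduite theory for kernels, or a variational argument in the symmetric or sectorial case. None is available here, and the paper neither proves nor cites such a principle. Note also that the natural reduction to discrete time fails: in discrete time $G\phi=\phi+PG\phi$ admits an elementary operator-theoretic maximum principle precisely because the ``immediate'' term $\phi$ is carried by the charge set, whereas in continuous time the analogous identity $G\phi=\int_0^s T_t\phi\,dt+T_sG\phi$ has its first term smeared off $\{\phi>0\}$ for every $s>0$, and writing $G=\sum_{k\ge 0}(\lambda G_\lambda)^k G_\lambda$ replaces the charge $\phi$ by $G_\lambda\phi$, which in general is strictly positive far outside $\{\phi>0\}$.

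Your proposed repair is also not correct: the excess set $\{G(g_0 1_{A_n})>n\}$ is in general \emph{not} weakly invariant. Weak invariance in this paper means the set cannot be charged from its complement ($T_t(f1_B)=0$ $\mu$-a.e. on $E\setminus B$); sets of the form $\{Gg=\infty\}$ do have this property (Remark \ref{rec2}(b), which is what \cite[Theorem 15]{Sc} is really about), but finite-level excess sets of potentials do not. For instance, for Brownian motion on $\R^3$ (transient) and a nice compactly supported charge $\phi$, the set $\{G\phi>c\}$ is a bounded open set which the process enters from outside with positive probability, although it carries none of the charge once $c$ exceeds $G\phi$ on the support of $\phi$. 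The paper's proof is engineered exactly to avoid any maximum principle: instead of the pieces $g_0 1_{A_m}$ it uses $\widetilde g_{mk}=\bigl(Gf\wedge m-T_k(Gf\wedge m)\bigr)1_{A_m\cap B_k}$, because for $v=Gf\wedge m$ the potential of $v-T_kv$ is computed \emph{exactly} by telescoping, $G(v-T_kv)=\int_0^k T_tv\,dt\le mk$, using only positivity and sub-Markovianity of $(T_t)_{t>0}$; the auxiliary sets $B_k$ then guarantee strict positivity of the pieces. So to salvage your argument you would have to either prove an a.e. complete maximum principle under (H1)--(H2) alone (a substantial task, and unclear in this generality), or replace your charges by ones whose potentials can be computed directly, which is precisely what the paper does.
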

\begin{proof}
Fix a function $f\in L^1(E,\mu)_b$ with $f>0$ $\mu$-a.e., $\|f\|_{L^\infty(\mu)} \leq 1$ and $\|f\|_{L^1(\mu)} \leq 1$. By Remark \ref{rec2}(a), we have\\
$$
0<Gf<\infty\ \mu\text{-a.e.}
$$
Define functions for $m,k \geq 1$, by
$$
g_{mk}:=Gf \wedge m -T_k(Gf \wedge m)
$$
where $a\wedge b :=\min\{a,b\}$. Then $\|g_{mk} \|_{L^\infty(\mu)} \leq m$. Moreover, if $x\in E$ is such that $Gf(x)<m$, then since $T_k$ is positivity preserving,
\begin{align*}
g_{mk}(x)&=Gf(x)-T_k(Gf \wedge m)(x) \\
&\geq Gf(x)-T_k(Gf)(x) =\int_0^k T_t f(x) dt \geq 0.
\end{align*}
If $x\in E$ is such that $Gf(x)\geq m$, then since $T_k$ is sub-Markovian, we have
$$
g_{mk}(x)= m-T_k(Gf \wedge m)(x) \geq 0.
$$
Consequently, $g_{mk}\geq 0$ $\mu$-a.e. Define for $m,k \geq 1$,
$$
A_m:=\{ x\in E:\  (m-1) \leq Gf(x) < m\}
$$
and
$$
B_k:=\Big \{x\in E:\ \int_0^{k-1} T_t f(x) dt=0\ \text{but} \ \int_0^{k} T_t f(x) dt>0\Big \}.
$$
Since $(T_t)_{t> 0}$ is transient, $\cup_{m=1}^\infty A_m=\cup_{k=1}^\infty B_k=E$ up to some $\mu$-negligible set. Without loss of generality, we may assume that $\mu(A_m \cap B_k)<\infty$ for any $m,k\geq 1$. Otherwise, we may subdivide $A_m\cap B_k$ in countably many pairwise disjoint sets with finite $\mu$-measure and proceed as below.  Let $c_{mk}:=\max(1, \mu(A_m \cap B_k))$ and
$$
\widetilde{g}_{mk}:= g_{mk} \cdot 1_{A_m\cap B_k}.
$$
Then $\| \widetilde{g}_{mk} \|_{L^\infty(\mu)} \leq m$, $\| \widetilde{g}_{mk} \|_{L^1(\mu)} \leq m\cdot \mu(A_m \cap B_k)$, $\widetilde{g}_{mk}>0$ on $A_m \cap B_k$ and
\begin{align*}
G\widetilde{g}_{mk} \leq G g_{mk} &= G \Big( Gf\wedge m -T_k(Gf\wedge m) \Big) =\int_0^k T_t(Gf\wedge m) dt \leq mk
\end{align*}  $\mu$-a.e. on $E$.
Consequently,
$$
g:=\sum_{m=1}^\infty \sum_{k=1}^\infty \frac{\widetilde{g}_{mk}}{2^m 2^k c_{mk}}
$$
satisfies the desired properties.
\end{proof}
\centerline{}
\centerline{}
Next, we give a general criterion for recurrence in case the generalized Dirichlet form can be represented by a linear perturbation of a sectorial Dirichlet form. By this, we mean that there exist a sectorial Dirichlet form $(\E^0,D(\E^0))$ in the sense of \cite[I. Definition 4.5 and I. (2.4)]{MR} with $D(L)_b \subset D(\E^0)$ and a linear operator $(N,D(N))$ on $\H$ such that
\begin{equation}\label{euv}
\E(u,v)=(-Lu,v)=\E^0(u,v)+\int_E u Nv d\mu
\end{equation}
for any $u\in D(L)_b$ and $v\in D(N)\cap D(\E^0)$. Here the linear operator $(N,D(N))$ needs not to be a generator of a $C_0$-semigroup of contractions on $\H$.
Thus from now on, we assume that the generalized Dirichlet form $\E$ satisfies the following condition:\\
\begin{itemize}
\item[(H3)] $\E$ can be decomposed as in (\ref{euv}) and
$$
\E^0(u,u)\leq \E(u,u)
$$
for any $u\in D(L)_b$.
\end{itemize}
\centerline{}
Let
$$
D:= \{u \in D(N)\cap D(\E^0) :\  Nu \in L^1(E,\mu)\}.
$$
\centerline{}
For the given sectorial Dirichlet form $(\E^0,D(\E^0))$, we define the $extended$ $Dirichlet$ $space$ of $D(\E^0)$ as the set of all measurable functions $u$ with $|u|<\infty$ $\mu$-a.e. for which there exists an $\E^0$-Cauchy sequence $(u_n)_{n\geq 1}\subset D(\E^0)$ such that
$$
\lim_{n\rightarrow \infty}u_n=u\ \ \mu\text{-a.e.} 
$$
(see \cite[Chapter 1.3]{O13}). Since the Dirichlet form $(\E^0,D(\E^0))$ is sectorial, for $u$ in the extended Dirichlet space, 
$$
\E^0(u,u):=\lim_{n\rightarrow \infty}\E^0(u_n,u_n)
$$ 
exists and is independent of the choice of $(u_n)_{n\geq 1}\subset D(\E^0)$ (this can be shown as in the paragraph right before \cite[Theorem 1.3.9]{O13}).
\begin{theo}\label{notran1}
Suppose $\tt$ is transient and let $g$ be as in Lemma \ref{gg}. Then $Gg$ is in the extended Dirichlet space of $D(\E^0)$ and one can define $\E^0(Gg,u):= \lim_{\alpha\rightarrow 0} \E^0(G_\alpha g,u)$ for $u\in D(\E^0)$. Moreover, if $u\in D$, then
\begin{equation}\label{equation2}
(u,g) = \E^0(Gg, u) + \int_E Gg\cdot Nu  d\mu.
\end{equation}
\end{theo}
\begin{proof}
Suppose that  $\tt$ is transient and let $g\in L^1(E,\mu)_b$ with $g>0$ $\mu$-a.e. such that $G g\in L^\infty(E, \mu)$. Since for every $\alpha >0$,
$$
\E^0(G_\alpha g,G_\alpha g)\leq \E(G_\alpha g,G_\alpha g)= -\int_E LG_\alpha g G_\alpha gd\mu\leq \int_E gGg d\mu< \infty,
$$
there exists an $\E^0$-Cauchy sequence $(g_n)_{n\geq 1}\subset D(\E^0)$ consisting of a Ces\`aro mean of $(G_{\alpha_n}g)_{n\geq 1}$ for $\alpha_n \to 0$ as $n\to \infty$. Indeed, this follows from the theorems of Banach/Alaoglu and Banach/Saks applied in the abstract completion of $(\E^0,D(\E^0))$. Consequently, $\E^0(Gg, Gg)=\lim_{n \rightarrow \infty}\E^0(g_n,g_n)$ exists and $\lim_{n\rightarrow \infty} g_n=Gg$ $\mu$-a.e. On the other hand, by the special form of $(g_n)_{n \geq 1}$, $\E^0(Gg,Gg)\leq\int_E gGg d\mu$ and $\lim_{n\rightarrow \infty} \E^0(g_n,u)=\lim_{n \rightarrow \infty} \E^0(G_{\alpha_n}g,u)$ for any $u\in D(\E^0)$. By our assumption for $u\in D$, we have for any $n\geq 1$
$$
(g,u)-\alpha_n(G_{\alpha_n} g,u)=(-LG_{\alpha_n} g, u)= \E^0(G_{\alpha_n} g,u)+\int_E  G_{\alpha_n} g\cdot Nu d\mu.
$$
Since $\lim_{n\rightarrow \infty} G_{\alpha_n} g=Gg$ $\mu$-a.e. and $Nu \in L^1(E,\mu)$, we obtain by Lebesgue's theorem
$$
\lim_{n\rightarrow \infty} \int_E G_{\alpha_n} g \cdot Nu d\mu=\int_E  Gg\cdot Nu d\mu.
$$
Since  $\E_\alpha (G_\alpha g,G_\alpha g)= \int_E gG_\alpha g d\mu\leq \int_E gGgd\mu$, for any $\alpha_n >0$, we get
$$
|\alpha_n (G_{\alpha_n}g,u)|\leq \sqrt{\alpha_n \int_E gGg d\mu}\cdot \| u\|_{L^2(\mu)}.
$$
Let $n \to \infty$ (thus $\alpha_n\to 0$) and we obtain (\ref{equation2}).
\end{proof}\\
\centerline{}
\begin{cor}\label{res}$ $
\begin{itemize}
\item[(a)] If there exists a sequence of functions $(\chi_n)_{n\geq 1} \subset D$ with $0\leq \chi_n \leq 1$, $\lim_{n\rightarrow \infty} \chi_n =1$ $\mu$-a.e. satisfying
\begin{equation}\label{suf}
\lim_{n \rightarrow \infty} \Big( \E^0(g,\chi_n) + \int_E g N\chi_n d\mu\Big) =0,
\end{equation}
for any non-negative bounded $g$ in the extended Dirichlet space of $D(\E^0)$, then $\tt$ is not transient.
\item[(b)] If $\tt$ is strictly irreducible and there is a sequence of functions $(\chi_n)_{n\geq 1} \subset D$ with $0\leq \chi_n \leq 1$, $\lim_{n\rightarrow \infty} \chi_n =1$ $\mu$-a.e. satisfying (\ref{suf}), then $\tt$ is recurrent by Remark \ref{rec2}(b).
\end{itemize}
\end{cor}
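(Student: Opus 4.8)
The plan is to establish (a) by contradiction and then to read off (b) from (a) together with Remark \ref{rec2}(b). So suppose \tt{} were transient. By Lemma \ref{gg} there is $g\in L^1(E,\mu)_b$ with $g>0$ $\mu$-a.e. and $Gg\in L^\infty(E,\mu)$, and Theorem \ref{notran1} then guarantees that $Gg$ lies in the extended Dirichlet space of $D(\E^0)$ and that (\ref{equation2}) holds for every $u\in D$. The idea is to feed the cut-off sequence $(\chi_n)_{n\geq 1}\subset D$ into (\ref{equation2}) and to notice that the function $Gg$ appearing there is of exactly the type against which hypothesis (\ref{suf}) is formulated.

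The key step is the admissibility observation: $Gg$ is non-negative because $g>0$ and $G$ is positivity preserving, it is bounded by the very choice of $g$ from Lemma \ref{gg}, and it belongs to the extended Dirichlet space by Theorem \ref{notran1}. Hence I may apply (\ref{suf}) with the generic non-negative bounded extended-Dirichlet-space function there taken to be this specific $Gg$. Substituting $u=\chi_n$ into (\ref{equation2}) gives
$$
(\chi_n,g)=\E^0(Gg,\chi_n)+\int_E Gg\cdot N\chi_n\, d\mu,
$$
and by the specialization of (\ref{suf}) just described the right-hand side tends to $0$. Therefore $\lim_{n\to\infty}(\chi_n,g)=0$.

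On the other hand, $0\leq\chi_n\leq 1$ and $\chi_n\to 1$ $\mu$-a.e. together with $g\in L^1(E,\mu)$ allow dominated convergence (with dominating function $g$) to give $\lim_{n\to\infty}(\chi_n,g)=\int_E g\, d\mu$, which is strictly positive since $g>0$ $\mu$-a.e. This contradicts the previous conclusion, so \tt{} is not transient, proving (a). For (b), strict irreducibility forces \tt{} to be either transient or recurrent by Remark \ref{rec2}(b); since it is not transient by (a), it is recurrent.

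I expect the only genuinely delicate point to be this admissibility check, namely verifying that the particular $Gg$ furnished by Lemma \ref{gg} simultaneously satisfies all three defining properties of the test class in (\ref{suf}) --- positivity, boundedness, and membership in the extended Dirichlet space --- so that the quantified hypothesis can legitimately be specialized to it. Once this is in place, the argument reduces to the short contradiction above, whose only analytic ingredient is the dominated-convergence identity $\lim_{n}(\chi_n,g)=\int_E g\,d\mu$.
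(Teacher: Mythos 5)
Your proof is correct and is exactly the argument the paper intends: Corollary \ref{res}(a) is the immediate consequence of Lemma \ref{gg} and Theorem \ref{notran1} obtained by taking $u=\chi_n$ in (\ref{equation2}), specializing (\ref{suf}) to the admissible function $Gg$, and contradicting $\lim_n(\chi_n,g)=\int_E g\,d\mu>0$ from dominated convergence, while (b) follows from Remark \ref{rec2}(b). The admissibility check you flag (non-negativity, boundedness, and extended-Dirichlet-space membership of $Gg$) is indeed the only point requiring care, and it is settled by Lemma \ref{gg} and Theorem \ref{notran1} just as you argue.
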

\centerline{}
\centerline{}
\begin{rem}\label{res2}
If $\E$ is a symmetric Dirichlet form, then we can drop the assumption that $\tt$ is strictly irreducible in 
Corollary \ref{res}(b). Indeed in this case one can use the $($weak$)$ invariance of $E_d:=\{x\in E:\ Gg(x)<\infty\}$, $g\in L^1(E,\mu)$, $g>0$ $\mu$-a.e. and the reduced form on $E_d$ in order to conclude $($cf. proof of \cite[Theorem 1.6.3]{FOT}$)$. Thus Corollary \ref{res}(b) can be seen as generalization of the symmetric case \cite[Theorem 1.6.5]{FOT}. However, in our general non-symmetric situation, even if $N\equiv 0$, $E_d$ is not weakly invariant in general and tools as in the symmetric case are not at hand. Consequently, strict irreducibility is imposed in Corollary \ref{res}(b). Moreover, Theorem \ref{notran1} is a generalization \cite[Theorem 1.3.9]{O13} in case $(\widehat{T}_t)_{t>0}$ is sub-Markovian.
\end{rem}

\subsection{Connection to recurrence and transience in the classical sense}\label{2.2}
For all notations, results that may not be defined, proved and cited in this Subsection, we refer to \cite{FOT}.\\
\centerline{}
Let $\mathbb{M}=(\Omega,(\F_t)_{t\geq 0},(X_t)_{t\geq 0},(P_x)_{x\in E_\triangle})$ with life time $\zeta$  be a right process with state space $E$, resolvent $R_\alpha f(x):=E_x \Big[ \int_0^\infty e^{-\alpha t} f(X_t)dt \Big ]$ and semigroup $p_t f(x):= E_x \Big [f(X_t) \Big ],$ $x\in E$, $\alpha >0,$ $t>0$, $f\in B(E)_b$, where $B(E)$ denotes the set of Borel measurable functions on $E$, $E_x$ denotes the expectation with respect to $P_x$. We assume that the measure $\mu$ is excessive relative to $(p_t)_{t>0}$, i.e.
$$
\int_E p_t 1_A(x) \mu(dx) \leq \mu(A),\ A\in \B(E).
$$
Hence, $(p_t)_{t>0}$ can be regarded as a linear operator sending a $\mu$-equivalence class to another $\mu$-equivalence class and can be extended as a linear operator on $L^1(E,\mu)$.\\
\centerline{}
We are able to define recurrence and transience of $\mathbb{M}$ as in Definition \ref{rec1}. The Markov process $\mathbb{M}$ is said to be recurrent, if for any $f\in L^1(E,\mu)$ with $f\geq 0$ $\mu$-a.e., we have
$$
E_x \Big [ \int_0^\infty f(X_t)dt \Big ] =0\ \text{or}\ \infty \text{ for } \mu \text{-a.e.}\ x\in E.
$$
$\mathbb{M}$ is said to be transient, if there exists $g\in L^1(E,\mu)$ with $g>0$ $\mu$-a.e., such that\\
\centerline{}
\centerline{$\displaystyle E_x \Big [ \int_0^\infty g(X_t)dt \Big ]<\infty$ \text{for} $\mu$-a.e. $x\in E$.}
\centerline{}
 A $\mu$-measurable set $B$ is said to be weakly invariant relative to $\mathbb{M}$, if for any $t>0$,\\
\centerline{}
\centerline{$\displaystyle E_x \Big [ 1_B(X_t) \Big ]=0$ for $\mu$-a.e. $x\in E\setminus B$.}
\centerline{}
$\mathbb{M}$ is said to be strictly irreducible, if for any weakly invariant set $B$ relative to $\mathbb{M}$, we have $\mu(B)=0$ or $\mu(E\setminus B)=0.$ A function $u$ is said to be excessive, if $p_t u(x) \nearrow u(x)$ as $t \searrow 0$ for all $x\in E$. Here, according to \cite[Theorem A.2.5]{FOT}, we will make the assumption that excessive functions are nearly Borel measurable with respect to $\mathbb{M}$. In particular, we may assume that sets like $\{u>0\}$, etc., are $\mu$-measurable.\\
For $\omega \in \Omega$, define the first hitting time $\sigma_B$ and last exit time $L_B$ from $B$ by
$$
\sigma_B(\omega):=\inf \{  t>0:\  X_t(\omega)\in B\}\ \text{ and } \ L_B(\omega):=\sup \{t\geq 0 :\ X_t(\omega)\in B\}.
$$
Note that $\sigma_B$ is $\F_t$-stopping time and $L_B$ is $\F_\infty$-measurable. Let
$$
p_B(x):= P_x(\sigma_B<\infty).
$$
Now we can characterize recurrence and transience of $\mathbb{M}$ in terms of its sample paths behavior following \cite{Ge}. More precisely, we have the following:\\
\begin{prop}\label{tpro}
$\mathbb{M}$ is transient, if and only if there exists a sequence of Borel finely open sets $(B_n)_{n\geq 1}$ increasing to $E$ up to some $\mu$-negligible set and for any $n\geq 1$\\
\begin{equation}\label{lbn}
P_x(L_{B_n}<\infty)=1\ \text{for}\ \mu\text{-a.e.}\ x\in E.
\end{equation}
\end{prop}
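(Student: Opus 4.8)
The plan is to reduce both implications to the single sample-path identity
$$
P_x(L_B>t)=p_t\,p_B(x)\qquad(t>0),
$$
valid for every nearly Borel finely open set $B$, and then to exploit that the excessive function $p_B=P_\cdot(\sigma_B<\infty)$ is a genuine potential (its $p_t$-limit vanishes) precisely when $B$ has $\mu$-a.e. finite last exit time. The identity itself I obtain from the strong Markov property at the fixed time $t$: on $\{L_B>t\}$ the post-$t$ path still meets $B$, so conditioning on $X_t$ gives $P_x(L_B>t)=E_x[P_{X_t}(\sigma_B<\infty)]=p_tp_B(x)$, where fine openness of $B$ guarantees $\sigma_B=0$ on $B$ and that the process sits in (the fine closure of) $B$ at the hitting time, which is what makes the conditioning exact. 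Letting $t\to\infty$ yields $P_x(L_B<\infty)=1-\lim_{t\to\infty}p_tp_B(x)$ $\mu$-a.e., so that $P_x(L_B<\infty)=1$ $\mu$-a.e. is equivalent to $p_tp_B\downarrow0$ $\mu$-a.e. Throughout I use the identification $G_\alpha=R_\alpha$ $\mu$-a.e., so that the analytic potential $G$ coincides $\mu$-a.e. with $Gf(x)=E_x[\int_0^\infty f(X_t)\,dt]$.

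For the implication $\Rightarrow$, assume $\mathbb{M}$ (equivalently $\tt$) is transient. By Lemma \ref{gg} I may fix $g_0\in L^1(E,\mu)_b$ with $g_0>0$ $\mu$-a.e. and $Gg_0\in L^\infty(E,\mu)$. The function $u:=Gg_0$ is bounded, excessive, and since $p_tu=u-\int_0^t p_sg_0\,ds\downarrow0$ it is a potential; being excessive it is finely continuous, so the super-level sets $B_n:=\{u>1/n\}$ are finely open and increase to $\{u>0\}=E$ up to a $\mu$-null set, as $g_0>0$ forces $u>0$ $\mu$-a.e. On $\{\sigma_{B_n}<\infty\}$ fine continuity of $u$ gives $u(X_{\sigma_{B_n}})\ge 1/n$, whence by excessiveness
$$
u(x)\ge E_x\big[u(X_{\sigma_{B_n}})\,;\,\sigma_{B_n}<\infty\big]\ge \tfrac1n\,p_{B_n}(x),
$$
i.e. $p_{B_n}\le n\,u$. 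Applying $p_t$ and using that $u$ is a potential gives $p_tp_{B_n}\le n\,p_tu\to0$ $\mu$-a.e., so by the identity $P_x(L_{B_n}<\infty)=1$ $\mu$-a.e., as required.

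For the implication $\Leftarrow$, suppose finely open $B_n\uparrow E$ (up to a null set) with $P_x(L_{B_n}<\infty)=1$ $\mu$-a.e. are given. I set $v:=\sum_{n\ge1}2^{-n}p_{B_n}$, an excessive function with $0\le v\le1$. Since a.e. $x$ lies in some $B_n$ and $p_{B_n}=1$ on $B_n$ (fine openness), $v>0$ $\mu$-a.e.; on the other hand the identity and the hypothesis give $p_tp_{B_n}(x)=P_x(L_{B_n}>t)\to0$, so by dominated convergence $p_tv\to0$ $\mu$-a.e., i.e. $v$ is a potential. Put $g:=v-p_1v\ge0$; telescoping the semigroup yields
$$
Gg=\int_0^\infty p_s(v-p_1v)\,ds=\int_0^1 p_sv\,ds\le v\le1<\infty\quad\mu\text{-a.e.}
$$
Moreover $g>0$ $\mu$-a.e.: if $p_1v=v$ on a set, iteration gives $p_nv=v$ there, contradicting $p_tv\to0$ except where $v=0$, so $\{g=0\}\subseteq\{v=0\}$ is $\mu$-null. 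Choosing any $\psi\in L^1(E,\mu)$ with $0<\psi\le g$ (possible as $\mu$ is $\sigma$-finite) gives $\psi>0$ $\mu$-a.e. and $G\psi\le Gg<\infty$ $\mu$-a.e., which is exactly transience of $\mathbb{M}$.

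The main obstacle is the careful justification of the last-exit identity $P_x(L_B>t)=p_tp_B(x)$ together with the attendant measurability and fine-topology facts --- nearly Borel measurability of $p_B$ and of $\{u>1/n\}$, the $\F_\infty$-measurability of $L_B$, the location $X_{\sigma_B}$ of the process in (the fine closure of) $B$ at the hitting time, and the passage between $\mu$-a.e. and pointwise statements; this is precisely the point at which I follow \cite{Ge}. The remaining steps (the Riesz-type identity $Gg=\int_0^1 p_sv\,ds$, the strict positivity of $g$, and the final truncation to an $L^1$ reference function) are then routine.
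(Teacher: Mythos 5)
Your overall strategy is the same as the paper's (which follows Getoor): for the forward implication, super-level sets $B_n=\{Rg_0>1/n\}$ of a potential, optional sampling at $\sigma_{B_n}$, and the last-exit identity $P_x(L_B>t)=p_tp_B(x)$; for the converse, an integrable reference function built from the hitting probabilities $p_{B_n}$. Your forward direction is correct (your use of Lemma \ref{gg} is harmless but unnecessary: the paper gets by with any $g>0$ $\mu$-a.e. such that $Rg<\infty$ $\mu$-a.e., boundedness playing no role). One cosmetic remark: the identity $P_x(L_B>t)=p_tp_B(x)$ needs only the simple Markov property and near-Borel measurability of $B$; fine openness is irrelevant for it (it is needed instead for $p_{B_n}=1$ on $B_n$ and for $u(X_{\sigma_{B_n}})\ge 1/n$).

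There is, however, a genuine gap in the converse direction, at the strict positivity of $g:=v-p_1v$. Your justification --- ``if $p_1v=v$ on a set, iteration gives $p_nv=v$ there'' --- does not work: from $v(x)=p_1v(x)$ for $x$ in a set $A$ you cannot iterate, because $p_2v(x)=p_1(p_1v)(x)$ and the only comparison available is $p_1v\le v$, which yields $p_2v(x)\le p_1v(x)=v(x)$, not equality; equality would require $p_1v=v$ to hold $p_1(x,\cdot)$-a.e., i.e. along the paths started at $x$, not merely on $A$. (Iteration would be legitimate if $p_1v=v$ held $\mu$-a.e. on all of $E$, using excessivity of $\mu$ to propagate null sets, but not on a proper subset.) Consequently $\{g=0\}\subseteq\{v=0\}$ is unproved; if $g$ vanished on a set of positive $\mu$-measure, no $\psi\in L^1(E,\mu)$ with $0<\psi\le g$ $\mu$-a.e. would exist and your final step collapses. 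This is exactly why the paper (and Getoor) take differences at \emph{all} time lags: with $g_{nk}:=\frac1k\bigl(p_{B_n}-p_kp_{B_n}\bigr)$ and $\widetilde g:=\sum_{n,k}2^{-n}2^{-k}g_{nk}$, vanishing of $\widetilde g$ at $x$ forces $p_{B_n}(x)=p_kp_{B_n}(x)$ for all $n,k$; since $p_kp_{B_n}(x)=P_x(L_{B_n}>k)\downarrow 0$ as $k\to\infty$ for $\mu$-a.e. $x$, this gives $p_{B_n}(x)=0$ for all $n$, which is impossible $\mu$-a.e. because $p_{B_n}=1$ on $B_n$ and $\bigcup_n B_n=E$ up to a $\mu$-null set. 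In your notation the repair is to replace $v-p_1v$ by $\sum_{k\ge1}2^{-k}\frac1k(v-p_kv)$, or by $v-R_1v=\int_0^\infty e^{-s}(v-p_sv)\,ds$ (whose vanishing at $x$ forces $v(x)=p_sv(x)$ for Lebesgue-a.e. $s$, hence $v(x)=0$ for $\mu$-a.e. such $x$, since $p_sv\downarrow 0$ $\mu$-a.e.); the telescoping bound $R\bigl(\frac1k(v-p_kv)\bigr)=\frac1k\int_0^kp_sv\,ds\le 1$ $\mu$-a.e. keeps the potential finite, and the rest of your argument then goes through unchanged.
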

\centerline{}
\begin{proof}
For $g\in L^1(E,\mu)$, $g\geq0$ pointwise, define 
$$
Rg(x):=E_x \Big [ \int_0^\infty g(X_t)dt \Big ]\in [0,\infty], \  x\in E.
$$
Then $Rg=Gg$ $\mu$-a.e. Assume that $\mathbb{M}$ is transient. Then by definition there exists $g\in L^1(E,\mu)$, $g>0$ $\mu$-a.e. such that $E_x \Big [ \int_0^\infty g(X_t)dt \Big ]<\infty$ for $\mu$-a.e. $x\in E$. Modifying $g$ on a $\mu$-negligible set, we may assume that $g\geq0$ pointwise, $g>0$ $\mu$-a.e. and $Rg<\infty$ $\mu$-a.e. In particular, $Rg$ is excessive, hence finely continuous and so
$$
B_n:=\{x\in E:\  Rg(x)>\frac{1}{n}\}, \ n\geq 1,
$$
are finely open sets that increase to  $E$ up to some $\mu$-negligible set. Let $(\theta_t)_{t\geq 0}$ be the shift operator of $\mathbb{M}$. Since $Rg<\infty$ $\mu$-a.e., $((Rg(X_t))_{t\ge 0}, (\F_t)_{t\geq 0}, P_x)$ is a positive supermartingale for $\mu$-a.e. $x\in E$. We hence obtain by the optional sampling theorem for positive supermartingales, which holds for arbitrary $(\F_t)$-stopping times, that for $n\geq 1$ and $t>0$
\begin{align*}
p_tRg(x) &= E_x[Rg(X_{t}) ] \\
&\geq E_x[Rg(X_{t+\sigma_{B_n}\circ\theta_t})]\\
&\geq E_x[Rg(X_{t+\sigma_{B_n}\circ\theta_t})\,;\, t+\sigma_{B_n}\circ\theta_t<\infty]\\
&\geq \frac{1}{n}\,P_x(t+\sigma_{B_n}\circ\theta_t<\infty).
\end{align*}
The last inequality followed since $X_{t+\sigma_{B_n}\circ\theta_t}$ is in the closure of $B_n$ by the right-continuity of the sample paths and so by the fine continuity of $Rg$, we have $Rg(X_{t+\sigma_{B_n}\circ\theta_t})\ge \frac{1}{n}$. Thus $P_x(L_{B_n}<\infty)=1$ for $\mu$-a.e $x\in E$. \\
Conversely, suppose there exists a sequence of Borel finely open sets $(B_n)_{n\geq 1}$ increasing to $E$ up to some $\mu$-negligible set satisfying (\ref{lbn}). Let
$$
g_{B_n}(x):= P_x(L_{B_n}>0)=P_x(\sigma_{B_n}<\infty).
$$
Then $g_{B_n}$ is excessive and bounded. 
Set
$$
g_{nk}(x):=\frac{1}{k}\Big (g_{B_n}(x)-p_{k}g_{B_n}(x)\Big ).
$$
Using (\ref{lbn}) and a similar argument to \cite[proof of (3.1) Lemma on p. 404 and proof of (2.2) Proposition (iii')$\Rightarrow$(i) on page 402]{Ge}, we construct
$$
\widetilde{g}:= \sum_{n=1}^\infty \sum_{k=1}^\infty \frac{g_{nk}}{2^n 2^k} >0 \ \mu\text{-a.e.}
$$
with $\widetilde{g}\geq0$ pointwise such that $R\widetilde{g}<\infty$ $\mu$-a.e. Finally, since $\mu$ is $\sigma$-finite, there exists $h >0$ $\mu$-a.e., $h\geq0$ pointwise with $h\in L^1(E,\mu)$. Then $g:=\widetilde{g}\wedge h$ is in $L^1(E,\mu)$ and $Rg<\infty$ $\mu$-a.e. Therefore $\mathbb{M}$ is transient.
\end{proof}\\
\centerline{}
A set $B\subset E$ is called $\mu$-polar, if there exists $B_0\in \B(E)$, $B_0\supset B$, such that 
$$
\int_E P_x(\sigma_{B_0}<\infty) \mu(dx)=0.
$$
\begin{prop}\label{rpro}
Let $\mathbb{M}$ be strictly irreducible and recurrent. Then the following holds:
\begin{itemize}
	\item[(a)] Any bounded excessive function $u$ satisfies for any $t>0$,\\
\centerline{}
\centerline{$\displaystyle p_t u(x)=u(x)$ for $\mu$-a.e. $x\in E$.}
	\item[(b)] Any excessive function is constant on $E$ $\mu$-a.e.
	\item[(c)] If  there are two finely open sets $G, \tilde G\subset E$, with $G\cap \tilde G=\emptyset$ and $\mu(G), \mu(\tilde G)>0$, then 
$\displaystyle P_x(\zeta=\infty)=1$ for $\mu$-a.e. $x\in E$.
\item[(d)]  If $B$ is not $\mu$-polar and finely open in $E$, then\\
\centerline{}
\centerline{$\displaystyle P_x(L_B<\infty)=0$ for $\mu$-a.e. $x\in E$.}
\end{itemize}
\end{prop}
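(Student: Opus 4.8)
The plan is to establish the four assertions in the order (a), (d), (b), (c): part (d) is the technical core from which (b) and (c) follow in a few lines, and (d) itself rests on (a). Throughout I work with $\mu$-a.e.\ versions and use that excessive functions are nearly Borel and finely continuous, and that $Rg=Gg$ $\mu$-a.e.\ for nonnegative $g$ as recorded in the proof of Proposition \ref{tpro}.

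For (a), fix $t>0$ and a bounded excessive $u$, and set $v:=u-p_tu\ge 0$. Writing $p_sv=p_su-p_{s+t}u$ and telescoping gives the bound $Rv=\int_0^\infty p_sv\,ds\le\int_0^t p_su\,ds\le t\,\|u\|_{L^\infty(\mu)}<\infty$, which is the only estimate on $Rv$ I need. Now choose $h\in L^1(E,\mu)\cap L^2(E,\mu)$ with $h>0$ $\mu$-a.e.\ (possible since $\mu$ is $\sigma$-finite) and put $f:=v\wedge h$, so that $f\in L^1(E,\mu)\cap L^2(E,\mu)$ and $Rf\le Rv<\infty$ $\mu$-a.e. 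Since $\mathbb{M}$ is recurrent, $Gf=Rf\in\{0,\infty\}$ $\mu$-a.e., and finiteness forces $Gf=0$ $\mu$-a.e. Hence $\int_0^\infty T_sf\,ds=0$ $\mu$-a.e., so $T_sf=0$ $\mu$-a.e.\ for Lebesgue-a.e.\ $s$; picking $s_n\downarrow 0$ along such $s$ and using the strong $L^2$-continuity of $(T_s)$ yields $f=0$ $\mu$-a.e. Replacing $h$ by $nh$ and letting $n\to\infty$ gives $v=0$, i.e.\ $p_tu=u$ $\mu$-a.e. This step uses recurrence only (and, applied to $u\equiv 1$, already yields $p_t1=1$ $\mu$-a.e.).

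For (d), let $B$ be finely open and not $\mu$-polar, and set $g_B:=P_\cdot(\sigma_B<\infty)$, which is bounded and excessive, hence invariant by (a). A last-exit decomposition gives, for each $t>0$, $P_x(L_B>t)=P_x(\sigma_B\circ\theta_t<\infty)=E_x[g_B(X_t)]=p_tg_B(x)=g_B(x)$ for $\mu$-a.e.\ $x$; letting $t\to\infty$ through integers off a common null set yields $P_x(L_B=\infty)=g_B(x)$ $\mu$-a.e. It remains to show $g_B=1$ $\mu$-a.e. The set $S:=\{g_B<1\}$ is weakly invariant relative to $\mathbb{M}$: if $g_B(x)=1$, then $E_x[g_B(X_t)]=g_B(x)=1$ together with $g_B\le 1$ forces $g_B(X_t)=1$ $P_x$-a.s., i.e.\ $p_t1_S(x)=0$. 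Since $B$ is finely open, $\sigma_B=0$ $P_y$-a.s.\ for $y\in B$, so $g_B\equiv 1$ on $B$; as a finely open set of positive measure is non-$\mu$-polar (indeed $\int_E P_x(\sigma_{B_0}<\infty)\,\mu(dx)\ge\mu(B)>0$ for every Borel $B_0\supseteq B$), we get $\mu(S^c)\ge\mu(B)>0$. Strict irreducibility then forces $\mu(S)=0$, i.e.\ $g_B=1$ $\mu$-a.e., so $P_x(L_B<\infty)=0$ $\mu$-a.e.

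For (b), let $u\ge 0$ be excessive; then $(u(X_t))_{t\ge 0}$ is a nonnegative $P_x$-supermartingale, so $\lim_{t\to\infty}u(X_t)$ exists $P_x$-a.s.\ for $\mu$-a.e.\ $x$. If $u$ were not $\mu$-a.e.\ constant there would be $a<b$ with $A:=\{u<a\}$ and $C:=\{u>b\}$ both finely open (fine continuity of $u$) and of positive $\mu$-measure, hence non-$\mu$-polar; by (d), $P_x(L_A=\infty)=P_x(L_C=\infty)=1$ $\mu$-a.e., so the path visits $A$ and $C$ at times tending to $\infty$, giving $\liminf_t u(X_t)\le a<b\le\limsup_t u(X_t)$, contradicting the existence of the limit. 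For (c), the hypothesis furnishes a finely open set, say $G$, with $\mu(G)>0$, which is therefore non-$\mu$-polar; by (d), $P_x(L_G=\infty)=1$ $\mu$-a.e. Since $X_s\in G$ forces $s<\zeta$, the existence of $s_n\to\infty$ with $X_{s_n}\in G$ forces $\zeta=\infty$, whence $P_x(\zeta=\infty)=1$ $\mu$-a.e. The main obstacles I expect are the two foundational steps: in (a), the implication $Gf=0\Rightarrow f=0$ (handled by the $L^1\cap L^2$ truncation and strong $L^2$-continuity), and in (d) the last-exit identity $P_x(L_B>t)=p_tg_B(x)$ together with the fine-topology fact $\sigma_B=0$ on a finely open $B$; once (a) and (d) are secured, (b) and (c) are short. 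Care is also needed throughout in passing between everywhere and $\mu$-a.e.\ statements.
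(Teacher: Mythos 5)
Your route is genuinely different from the paper's: you prove (a) by truncating $u-p_tu$ against an $L^1\cap L^2$ function and invoking the recurrence dichotomy, then get (d) from (a) plus strict irreducibility, and deduce (b), (c) from (d); the paper proves (a) via weak invariance of $\{R1_A=\infty\}$ together with strict irreducibility, proves (b) from (a) by optional sampling, and only then derives (c), (d) from (a) and (b). Your (b) and (c) are correct given (d) (your (c) is even leaner than the paper's: one finely open set of positive measure suffices). However, the two steps you yourself flag as foundational both contain genuine gaps.

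In (a), the inference $Gf=0\Rightarrow f=0$ rests on ``strong $L^2$-continuity of $(T_s)$''. This is not available here. Proposition \ref{rpro} concerns a right process $\mathbb{M}$ whose only link to $\mu$ is sub-invariance, $\int_E p_t1_A\,d\mu\le\mu(A)$; the association of $\mathbb{M}$ with the $C_0$-semigroup $(T_t)_{t>0}$ of Subsection \ref{2.1} is assumed only \emph{after} this proposition. So the operators acting in your argument are the transition operators $p_t$, and their strong continuity on $L^2(E,\mu)$ at $t=0$ is neither a hypothesis nor established anywhere in the paper: sub-invariance gives contractivity on $L^1$ and, by interpolation, on $L^2$, but continuity at $0$ amounts to $\mu p_t\to\mu$ as $t\to 0$, which the paper's weak notion of excessiveness does not supply. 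This is precisely the point the paper's proof of (a) is built to avoid: there one uses that $\{R1_A=\infty\}$ is weakly invariant, strict irreducibility, and the recurrence dichotomy to contradict the bound $Rg_n\le g\le\|g\|_{L^\infty}$. Your own remark that your (a) ``uses recurrence only'' is the symptom of the gap: the paper's (a) genuinely uses strict irreducibility, exactly in place of the continuity property you invoke.

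In (d), to conclude $\mu(S)=0$ from strict irreducibility you must exclude $\mu(S^c)=0$, and you do so by writing $\mu(S^c)\ge\mu(B)>0$. But $\mu(B)>0$ is not a hypothesis: $B$ is only assumed finely open and not $\mu$-polar, and your parenthetical establishes the \emph{converse} implication (positive measure implies non-$\mu$-polar), not the one you need. Non-$\mu$-polarity directly yields only $\mu(\{g_B>0\})>0$. To close the argument you must either prove that a finely open non-$\mu$-polar set has positive $\mu$-measure --- true, but it needs a real argument: on $\{\sigma_B<\infty\}$ the path, once it enters the finely open set $B$, remains there for a positive time, so $R1_B>0$ on a set of positive $\mu$-measure, whence $\mu p_t(B)>0$ for some $t$ and thus $\mu(B)\ge\mu p_t(B)>0$ by sub-invariance --- or follow the paper and obtain (d) from (b); the latter is closed to you because your (b) is itself deduced from (d). (To be fair, the paper's own proof of (d) is terse at the same spot: the constant furnished by (b) equals $1$ because $p_B\equiv 1$ on $B$, which settles it only when $\mu(B)>0$.) So the architecture of your proof is sound and in places more economical, but both (a) and (d) need repair before it stands.
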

\begin{proof}
(a) Let $u$ be a bounded and excessive function. Then $t\to p_tu$ is decreasing as $t\to \infty$. Set $\psi(x):= \lim_{t\rightarrow \infty}p_t u(x)$. Then for any $s > 0$,
$$
p_s\psi(x)=\displaystyle \lim_{t\rightarrow \infty} p_{s+t}u(x)=\psi(x).
$$
Set $g:=u-\psi$. Then for any $t>0$,
$$
p_tg=p_tu-p_t\psi=p_tu-\psi
$$
and $p_tg \nearrow g $ as $t\searrow 0$, since $u$ is excessive. It follows that $g$ is also excessive and bounded. Furthermore, since $p_t g(x) \to 0$ as $t\to \infty$ and $p_t g(x) \to g(x)$ as $t\to 0$,
$$
g_n:=n(g-p_{1/n}g)
$$satisfies $Rg_n \nearrow g$ as $n \to \infty$. If $\mu(\{ x\in E\, : \, g_n(x)>0\})>0$, then there exists $\varepsilon >0$ with $\mu(\{  x\in E\,:\, g_n(x)>\varepsilon \})>0$. Set $A:=\{ x\in E\,:\, g_n(x)>\varepsilon\}$. Then $\varepsilon 1_A < g_n$. Since $\mu$ is $\sigma$-finite, we may assume that $\mu(A)<\infty$. Thus $\{ x\in E\,:\, R 1_A(x)=\infty\}$ is weakly invariant and so by strict irreducibility and recurrence  of $\mathbb{M}$, $R 1_A=\infty$ $\mu$-a.e., hence $R g_n=\infty$ $\mu$-a.e. However, since $g$ is bounded, we must have that $Rg_n$ is bounded for any $n\ge 1$.  Thus $g_n=0$ $\mu$-a.e. for any $n \geq 1$, which further implies that $g=0$ $\mu$-a.e. Equivalently, $u=\lim_{t\rightarrow \infty}p_tu$. Since $t\to p_tu$ is decreasing and $p_tu\leq u$, we obtain for all $t>0$,\\
\centerline{}
\centerline{$\displaystyle p_tu(x)=u(x)$ for $\mu$-a.e. $x\in E$.}
\centerline{}
(b) Let $f$ be a non-constant excessive function. Then there exist $0<a\le b$ with $A:=\{x\in E:\ f(x)<a\}$ and $B:=\{x\in E:\  f(x)>b\}$ satisfying\\
\centerline{}
\centerline{$\mu(A)>0$ and  $\mu(B)>0$.}
\centerline{}
Let $C:=\{x\in E:\  p_B(x)=1\}$. Then for any $x\in B$, we have $P_x(\sigma_B=0)=1$, hence $B\subset C$ and so $\mu(C)>0$. Set $D:=\{x\in E:\  p_B(x)<1\}$. Since $p_B$ is bounded and excessive, $D$ is nearly Borel measurable  and for any $t>0$, by (a)\\
\centerline{}
\centerline{$\displaystyle 1=p_B(x)=p_t p_B(x)=E_x[p_B(X_t)], \quad$ for $\mu$-a.e. $x\in C$.}
\centerline{}
Consequently, for each $t>0$, $P_x(X_t \in D)=0$ for $\mu$-a.e $x\in C$, hence $R 1_D=0$ $\mu$-a.e. on $C$. Since $\mu$ is $\sigma$-finite, there exists a $\mu$-integrable function $h$, with $0<h<1$ $\mu$-a.e. If $\mu(D)>0$, then $\mu(\{ x\in E\, : \, R (h 1_D)(x)>0\})>0$ and so as in (a) by strict irreducibility, we obtain $R (h1_D)=\infty$ $\mu$-a.e. which further implies that $R 1_D=\infty$ $\mu$-a.e. But this is a contradiction, since $\mu(C)>0$. Thus $\mu(D)=0$.  Now, for $\mu$-a.e.  $x\in A$, we have since  $((f(X_t))_{t\ge 0}, (\F_t)_{t\geq 0}, P_x)$ is a positive supermartingale
\begin{align*}
a>f(x) \geq E_{x} [f(X_{\sigma_B})] &\geq b\, P_{x}(\sigma_B<\infty)=b\,p_B(x).
\end{align*}
This contradicts $\mu(A)>0$. Therefore any excessive function is constant on $E$ $\mu$-a.e.\\
(c) Let $\psi(x)= E_x[1-e^{-\zeta}]$. Then
$$
p_t \psi(x)=E_x[\psi(X_t)\  ; \ t<\zeta]=E_x[1-e^{-(\zeta-t)}\  ; \  t<\zeta]\ \nearrow \ \psi(x)\quad as\  \ t\searrow 0.
$$
Hence $\psi$ is excessive and by (b) there is some $c\ge 0$ with $E_x[e^{-\zeta}]=c$ for $\mu$-a.e $x\in E$. 
By assumption, there are two finely open sets $G, \tilde G\subset E$, with $G\cap \tilde G=\emptyset$ and $\mu(G), \mu(\tilde G)>0$. Since $p_G=1$ on $G$, we obtain by (b)\\
\centerline{}
\centerline{ $p_G(x)=1$ for $\mu$-a.e. $x\in E$.}
\centerline{}
Since $\{\sigma_G<\infty\}=\{\sigma_G<\zeta\}$, we obtain for $\mu$-a.e. $x\in E$
\begin{align*}
c=E_x[e^{-\zeta}]&= E_x[e^{-\zeta}\  ;\  \sigma_G<\zeta]\\
&=E_x[e^{-\sigma_G} E_{X_{\sigma_G}}[e^{-\zeta}]\  ; \  \sigma_G<\zeta]\\
&=c\, E_x[e^{-\sigma_G}].
\end{align*}
But for $x\in \tilde{G}$, we have $E_x[e^{-\sigma_G}]<1$. Therefore $E_x[e^{-\zeta}]=0$ for $\mu$-a.e $x\in E$.\\
(d) Let $B$ be not $\mu$-polar and finely open. Using  (b), we get $p_B(x)=1$ for $\mu\text{-a.e.}\ x\in E$. Since $p_B(x)=P_x(L_B>0)$ for every $x\in E$, we get
$$
\psi(x):=P_x(0<L_B<\infty)=P_x(L_B<\infty) \text{ for } \mu\text{-a.e. } x\in E.
$$
Since $p_t \psi(x)=P_x(t<L_B<\infty)$ for any $x\in E$, we obtain that $\psi$ is excessive, bounded and $p_t \psi \to 0$ as $t\to \infty$. By (a) and (b), for some constant $c$
$$
\psi(x)=c\  \text{and} \ c=p_t c\text{ for } \mu\text{-a.e.}\ x\in E.
$$
But since $p_t \psi=p_t c \to 0$ as $t\to \infty$, we must have $c=0$, i.e. $P_x(L_B<\infty)=0$ for $\mu$-a.e. $x\in E$.
\end{proof}\\
\centerline{}
Suppose that the process $\mathbb{M}$ is associated with $\E$, i.e. $R_\alpha f$ is a $\mu$-version of $G_\alpha f$ for any $\alpha >0$, $f\in B(E)\cap L^2(E,\mu)$. Then the strict irreducibility and recurrence of $\tt$ implies the strict irreducibility and recurrence of $\mathbb{M}$. Consequently, by Proposition \ref{rpro}(d) for any non-empty and non-$\mu$-polar open set $B$,\\
\centerline{}
\centerline{$P_x(\Lambda)=1$ for $\mu$-a.e. $x\in E$,}
\centerline{}
where $\Lambda:=\{\omega \in \Omega:\ L_B(\omega)=\infty\}$. Furthermore, assume that the semigroup $p_t$ of $\mathbb{M}$ is strong Feller in the following sense: there exists a measurable function $\left (p_t(x,y)\right )_{t>0, x,y\in E}$ with
$$
E_x\Big [ f(X_t) \Big ] =p_t f(x)=\int_E p_t(x,y) f(y) \mu (dy)
$$
for any $x\in E$, $f\in B(E)_b$ and\\
\centerline{}
\centerline{$\displaystyle p_t f$ is continuous for any $f\in B(E)_b$.}
\centerline{}
Since $\Lambda$ is a shift invariant set, we can use the argument of \cite[Lemma 7.1]{DR} to see that\\
\centerline{}
\centerline{$\displaystyle P_x(\Lambda)=1$ for any $x$ in the support of $\mu$.}
\centerline{}
Consequently, for an arbitrary non-empty and non-$\mu$-polar open set $B$, the sample paths of $(X_t)_{t\geq 0}$ starting from any point $x$ in the support of $\mu$ come back to $B$ infinitely often. In particular, if $\mu$ has full support, then any non-empty open set $B$, satisfies $\mu(B)>0$ and is hence non-$\mu$-polar. Thus if $\mu$ has full support, for any non-empty open set $B$, the sample paths of $(X_t)_{t\geq 0}$ starting from any point $x$ in $E$ come back to $B$ infinitely often.

\section{Applications on Euclidean space}\label{3}
Throughout this section, we make the following assumptions: \\
Let $E\subset \R^d$ be either open or closed. If $E$ is closed, we assume $dx(\partial E)=0$ where $E$ is the disjoint union of its interior $E^0$ and its boundary $\partial E$. Let $\varphi \in L_{loc}^1 (E,dx)$ with $\varphi >0$ $dx$-a.e. and $d\mu :=\varphi dx$. Then $\mu$ is a $\sigma$-finite measure on $\B(E)$ and has full support. Let $C_0^\infty(E)$ be the set of infinitely often differentiable functions with compact support in $E$ if $E$ is open and $C_0^\infty(E):=\{ u\in E \longrightarrow \R :\ \exists \widetilde{u}\in C_0^\infty(\R^d)$ with $\widetilde{u}=u$ on $E \}$ if $E$ is closed. Let $\partial_i u$ denote the weak derivative of $u$ with respect to $x_i$, $\nabla u:=( \partial_1 u,\ldots,\partial_d u)$, $|\cdot|$ the Euclidean norm and $\langle\ ,\ \rangle$ the Euclidean inner product. \\
Consider $A=(a_{ij})_{1\leq i,j \leq d} \in L_{loc}^1(E,\mu)$ with symmetric part $\widetilde{a}_{ij}:=\frac{1}{2}(a_{ij}+a_{ji})$ and anti-symmetric part $\check{a}_{ij}:=\frac{1}{2}(a_{ij}-a_{ji})$ and suppose that for each relatively compact open set $V \subset E$, i.e. $V$ is relatively open in $E$ and its closure $\overline{V}$ is compact and contained in $E$, there exists $\nu_V >0$ such that
\begin{equation}\label{loc}
\nu_V^{-1} |\xi|^2 \leq \sum_{i,j=1}^d \widetilde{a}_{ij}(x) \xi_i \xi_j \leq \nu_V |\xi|^2
\end{equation}
for all $\xi \in \R^d$, $x\in V$. We assume further that\\
\centerline{}
\centerline{$\displaystyle \E^0(f,g):=  \int_E \langle A(x) \nabla f(x), \nabla g(x)\rangle \mu(dx),\ $ $f,g \in \ci$} \\
\centerline{}
is closable on $L^2(E,\mu)$ and that $(\E^0,C_0^\infty(E))$ satisfies the strong sector condition, i.e. there is a constant $K>0$ such that\\
\centerline{}
\centerline{$\displaystyle|\E^0(f,g)|\leq K \sqrt{\E^0(f,f)}\sqrt{\E^0(g,g)}$ for any $f,g \in \ci$.} \\
\centerline{}
Denote the closure of $(\E^0,\ci)$ on $L^2(E,\mu)$ by $(\E^0,D(\E^0))$. Then $(\E^0,D(\E^0))$ is a non-symmetric regular sectorial Dirichlet form on $L^2(E,\mu)$. \\
By $V\subset\subset E$, we mean that $V$ is relatively compact open in $E$. For $V\subset\subset E$, let $C_0^\infty(V):=\{u\in \ci :\ $supp$(u)\subset V\}$.  Since $(\E^0,C_0^\infty(E))$ is closable on $L^2(E,\mu)$, for any $V\subset\subset E$, $(\E^0,C_0^\infty(V))$ is closable on $L^2(V,\mu)$. Denote its closure by $(\E^{0,V},D(\E^{0,V}))$, then $D(\E^{0,V})\subset D(\E^0)$. Furthermore by (\ref{loc}), the $\E^0_1$-norm is equivalent to the norm $ \sqrt{\int_V (u^2+|\nabla u|^2) d\mu}$ on $D(\E^{0,V})$. Let
$$
D(\E^{0,E}):=\bigcup_{V \subset \subset E} D(\E^{0,V}),
$$
i.e. $f\in D(\E^{0,E})$, if and only if there exists a subset $V \subset \subset E$ such that $f \in D(\E^{0,V})$. Note that $D(\E^{0,E})\subset D(\E^{0}).$\\
Let $(L^0,D(L^0))$ be the linear operator corresponding to $(\E^0,D(\E^0))$ on $L^2(E,\mu)$. By \cite[I. Proposition 2.16]{MR}, we know that $D(L^0)=\{u\in D(\E^0) : v \longmapsto \E^0(u,v)$ is continuous with respect to $\sqrt{(v,v)}$ on $D(\E^0) \}$ and that $\E^0(f,g)=(-L^0 f,g)$ for any $f\in D(L^0)$, $g\in D(\E^0)$. Let $(T^0_t)_{t> 0}$ be the $C_0$-semigroup corresponding to $(L^0,D(L^0))$. \\
\centerline{}
Let $B:=(B_1,\ldots ,B_d)\in L_{loc}^2 (E,\R^d,\mu)$ be $\mu$-divergence free, i.e.
\begin{equation}\label{div1}
\int _E \langle B(x), \nabla f(x)\rangle \mu(dx)=0
\end{equation}
for any $f\in \ci$, hence for any $f\in D(\E^{0,E})$. Using the same technique as in \cite{St1}, we can construct a closed extension 
$(\overline{L},D(\overline{L}))$ of 
$$
Lu:=L^0 u+ \langle B,\nabla u \rangle, \ u\in D(L^0)_{0,b}
$$ 
on $L^1(E,\mu)$. For this, we need condition
\begin{itemize}
\item[(C)] $D(L^0)_{0,b}$ is a dense subset of $L^1(E,\mu)$,
\end{itemize}
which we assume from now on. 
\begin{rem}\label{opdenseness}
Condition (C) is needed to obtain strong continuity of the resolvent of $(\overline{L},D(\overline{L}))$, exactly as it is obtained in \cite{St1} right after display (1.15). 
It is a weak condition. For instance, consider $E:=\R^d$ and assume that
the coefficients of the generator of $L^{0}$ are locally square integrable with respect to the measure $\mu$ and that there are no boundary conditions. Then $C_0^{\infty}(E)\subset D(L^0)_{0,b}$, cf. e.g. Subsections \ref{exam1}, \ref{exam2} and Remark \ref{condC} below. Condition (C) can even be obtained when the coefficients are not locally integrable 
with respect to the measure $\mu$ (see end of Remark \ref{condC}). Similarly, one can obtain nice dense subsets of $D_0$ in case of boundary conditions. 
\end{rem}
\begin{lem}\label{prop}
There exists a closed operator $(\overline{L},D(\overline{L}))$ on $ L^1(E,\mu)$ which is the generator of a sub-Markovian $C_0$-semigroup of contractions $(\overline{T}_t)_{t> 0}$ satisfying the following properties:
\begin{itemize}
\item[(a)] $(\overline{L},D(\overline{L}))$ is a closed extension of $L u=L^{0}u+\langle B,\nabla u \rangle$, $u\in D(L^0)_{0,b}$ on $ L^1(E,\mu)$.
\item[(b)] $D(\overline{L})_b \subset D(\E^0)$ and for $u\in D(\overline{L})_b$, $v\in D(\E^{0,E})_{b}$, we have
$$
\E^0(u,v)-\int_E \langle B,\nabla u\rangle v d\mu=-\int_E \overline{L} u v d\mu
$$
and
$$
\E^0(u,u)\leq -\int_E \overline{L} u u d\mu.
$$
\end{itemize}
\end{lem}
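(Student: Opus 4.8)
The plan is to follow the $L^1$-construction of \cite{St1}, the decisive input being the cancellation produced by the $\mu$-divergence freeness of $B$. First I would record the elementary identity: if $u\in D(L^0)_{0,b}$ and $\phi\in C^1(\R)$ has $\phi(0)=0$ and $\phi'$ bounded, then the primitive $\Phi$ with $\Phi'=\phi$, $\Phi(0)=0$ satisfies $\Phi(u)\in D(\E^{0,E})_b$ (same compact support as $u$, and $\nabla\Phi(u)=\phi(u)\nabla u\in L^2$), so by (\ref{div1}) extended to $D(\E^{0,E})$,
$$
\int_E \langle B,\nabla u\rangle\,\phi(u)\,d\mu=\int_E\langle B,\nabla\Phi(u)\rangle\,d\mu=0.
$$
Taking $\phi=\mathrm{id}$ gives $\int_E\langle B,\nabla u\rangle\,u\,d\mu=0$, and taking $\phi$ to approximate $\mathrm{sgn}$ and the truncations $t\mapsto(t-c)^+\wedge 1$ shows that the drift term disappears from every $L^1$-contractivity and sub-Markovianity estimate. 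Hence on the core $D(L^0)_{0,b}$ the operator $Lu=L^0u+\langle B,\nabla u\rangle$ is $L^1$-dissipative and inherits the sub-Markovian bounds of $(L^0,D(L^0))$, whose semigroup $(T^0_t)_{t>0}$ and its dual are sub-Markovian.

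Next I would produce the resolvent as in \cite{St1}. Since $B$ is $\mu$-divergence free, $\Lambda:=\langle B,\nabla\,\cdot\,\rangle$ is antisymmetric on $\ci$; following \cite{St1} its closure generates a sub-Markovian $C_0$-semigroup on $\H$ (transport along $B$, which preserves $\mu$) that restricts to $\V=D(\E^0)$, with sub-Markovian dual (transport along $-B$), so that (H1) and (H2) hold for $\E(u,v)=\E^0(u,v)-\int_E\langle B,\nabla u\rangle v\,d\mu$ and one obtains the $L^2$-resolvent $(G_\alpha)_{\alpha>0}$ with generator $L$ and $G_\alpha(\H)\subset\F\subset D(\E^0)$. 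Restricting $G_\alpha$ to $L^1(E,\mu)\cap\H$ and invoking the $L^1$-contraction bound of the first paragraph, $G_\alpha$ extends to sub-Markovian contractions $\overline{G}_\alpha$ on $L^1(E,\mu)$ obeying the resolvent equation, with $D(L^0)_{0,b}\subset D(\overline L)$ and $\overline L u=Lu$ there. The only non-automatic point, and the main obstacle, is strong continuity $\alpha\overline{G}_\alpha f\to f$ in $L^1(E,\mu)$ as $\alpha\to\infty$: here condition (C) enters, since the density of $D(L^0)_{0,b}$ in $L^1(E,\mu)$ yields the convergence on this dense core via the resolvent equation and the uniform contraction bound, hence on all of $L^1(E,\mu)$, exactly as after display (1.15) of \cite{St1}. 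Thus $(\overline{G}_\alpha)_{\alpha>0}$ is a sub-Markovian $C_0$-resolvent of contractions; its generator $(\overline{L},D(\overline{L}))$ is closed, generates the sub-Markovian $C_0$-semigroup $(\overline{T}_t)_{t>0}$, and extends $L$ on $D(L^0)_{0,b}$, proving (a).

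For (b) I would begin from the weak identity on the core: for $w\in D(L^0)_{0,b}$ and $v\in D(\E^{0,E})_b$ one has $-\int_E Lw\,v\,d\mu=\E^0(w,v)-\int_E\langle B,\nabla w\rangle v\,d\mu$, immediate from $\E^0(w,v)=(-L^0w,v)$. To pass to $u\in D(\overline{L})_b$, write $f:=(\alpha-\overline{L})u\in L^1(E,\mu)$, truncate $f$ to bounded, compactly supported $f_k\in L^1(E,\mu)\cap\H$, and set $u_k:=G_\alpha f_k\in D(L)_b\subset D(\E^0)$; first one upgrades the cancellation of the first paragraph from $\ci$ to $D(L)_b$ by approximating $u_k$ with compactly supported functions and using lower semicontinuity, which gives the energy bound $\E^0(u_k,u_k)\le\E(u_k,u_k)=(-Lu_k,u_k)=(f_k-\alpha u_k,u_k)$, while $u_k\to u$ in $L^1(E,\mu)$ and $\mu$-a.e. along a subsequence. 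A Banach/Saks argument places $u$ in $D(\E^0)$, and letting $k\to\infty$ yields the first asserted identity $\E^0(u,v)-\int_E\langle B,\nabla u\rangle v\,d\mu=-\int_E\overline{L}u\,v\,d\mu$ for all $v\in D(\E^{0,E})_b$. Finally, by lower semicontinuity of the symmetric part of $\E^0$, $\E^0(u,u)\le\liminf_k\E^0(u_k,u_k)\le\liminf_k(f_k-\alpha u_k,u_k)=-\int_E\overline{L}u\,u\,d\mu$, which is the diagonal inequality; the inequality rather than an equality is precisely the $\liminf$ produced by lower semicontinuity, reflecting a possible loss of a non-negative drift contribution at infinity. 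I expect the two genuinely delicate steps to be the strong continuity via condition (C) and this successive upgrading of the divergence-free cancellation from $\ci$ through $D(L)_b$ to $D(\overline{L})_b$.
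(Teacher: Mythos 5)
Your construction of the resolvent in the second paragraph contains the decisive gap. You claim that, since $B$ is $\mu$-divergence free, the closure of $\Lambda:=\langle B,\nabla\,\cdot\,\rangle$ on $\ci$ generates a sub-Markovian $C_0$-semigroup on $\H$ (``transport along $B$'') that restricts to a $C_0$-semigroup on $\V=D(\E^0)$, so that (H1)--(H2) hold with $\A=\E^0$ and the $L^2$-resolvent of the full operator then falls out of generalized Dirichlet form theory. This is not what \cite{St1} does, and it is false at the stated level of generality: $B$ is only assumed to be in $L^2_{loc}(E,\R^d,\mu)$, so no flow along $B$ need exist (and when it exists it need not be unique), and antisymmetry of $\Lambda$ on $\ci$ gives closability but by no means skew-adjointness of the closure, hence no generation theorem. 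Non-uniqueness phenomena for transport equations with rough divergence-free drifts are exactly the obstruction here; indeed, in the paper's own counterexample of Subsection \ref{exam1} ($B(x)=-6e^{x^2}$) the characteristic flow explodes in finite time. Relatedly, your first paragraph only establishes $L^1$-dissipativity of $L$ on the core $D(L^0)_{0,b}$; without the range condition, i.e. density of $(\alpha-L)(D(L^0)_{0,b})$ in $L^1(E,\mu)$, Lumer--Phillips gives no generator, and the range condition is precisely the hard analytic content that your proposal never addresses. Note also that the logical order matters for the paper: the generalized Dirichlet form of Section \ref{3} is built with $\A\equiv 0$ and $\Lambda$ equal to the \emph{full} $L^2$-generator $L$, which only becomes available \emph{after} Lemma \ref{prop} is proved; using generalized Dirichlet form theory to produce the resolvent, as you do, inverts this order and cannot be made to work with the drift alone playing the role of $\Lambda$.

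The paper instead proves the lemma by localization. For $V\subset\subset E$ the form $(\E^0,D(\E^{0,V}))$ is a regular sectorial Dirichlet form and $B|_V\in L^2(V,\R^d,\mu)$ globally, so \cite[Proposition 1.1]{St1} applies directly: $L^{0,V}u+\langle B,\nabla u\rangle$, $u\in D(L^{0,V})_b$, is closable on $L^1(V,\mu)$ and its closure generates a sub-Markovian $C_0$-resolvent $(\overline{G}^V_\alpha)_{\alpha>0}$, with the analogue of (b) holding on $V$ \emph{with equality} on the diagonal, since $\int_V\langle B,\nabla u\rangle u\,d\mu=\frac12\int\langle B,\nabla(u^2)\rangle\,d\mu=0$ there. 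By \cite[Lemma 1.6]{St1} these local resolvents are monotone in $V$ on non-negative functions, so along an exhaustion $(V_n)_{n\geq 1}$ the limit $\overline{G}_\alpha f:=\lim_{n\rightarrow\infty}\overline{G}^{V_n}_\alpha(f\cdot 1_{V_n})$ exists, is independent of the exhaustion, and defines a sub-Markovian $C_0$-resolvent of contractions on $L^1(E,\mu)$, whose generator is a closed extension of $L$ (not necessarily the closure, which is another reason a pure closability argument is the wrong frame); strong continuity is where condition (C) enters, and this is the one point where your proposal and the paper genuinely agree. Your treatment of (b) --- passing from the core identity to $D(\overline{L})_b$ by truncation, Banach--Saks and lower semicontinuity, with the diagonal inequality arising as a $\liminf$ --- is correct in spirit and close to how the paper handles the verification of (H3) after the lemma, but as written it presupposes both the existence of $(G_\alpha)_{\alpha>0}$ and the inclusion $D(L)_b\subset D(\E^0)$, and both of these are outputs of the localization procedure you skipped, not inputs available beforehand.
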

Lemma \ref{prop} is proven in Section 4. Denote the  $C_0$-resolvent of $(\overline{L},D(\overline{L}))$ by $(\overline{G}_\alpha)_{\alpha>0}$. Since $(\overline{T}_t)_{t> 0}$ is a sub-Markovian $C_0$-semigroup of contractions on $L^1(E,\mu)$ and $L^1(E,\mu)_b \subset L^2(E,\mu)$ densely, we can construct uniquely a sub-Markovian $C_0$-semigroup of contractions $\tt$ on $L^2(E,\mu)$ such that $T_t \equiv \overline{T}_t$ for $t> 0$ on $L^1(E,\mu) \cap L^2(E,\mu)$ (cf. the Riesz-Thorin interpolation Theorem). Let $(L,D(L))$ be the generator of $\tt$ and $\ga$ be the corresponding $C_0$-resolvent.  Clearly, $G_\alpha \equiv \overline{G}_\alpha$ for $\alpha> 0$ on $L^1(E,\mu) \cap L^2(E,\mu)$. Let $(\widehat{L},D(\widehat{L}))$ be the adjoint operator of $(L,D(L))$ in $L^2(E,\mu)$.  Then
$$
\E(f,g):= \begin{cases}\ (-Lf,g)\qquad &\  f\in D(L),\ g\in L^2(E,\mu),\\
\ (-\widehat{L}g,f)\qquad &\  g\in D(\widehat{L}),\ f\in L^2(E,\mu),
\end{cases}
$$
is a generalized Dirichlet form on $L^2(E,\mu)$ according to Subsection \ref{2.1} with $\A\equiv 0$ on $\V=\H=L^2(E,\mu)$ and $(L,D(L))=(\Lambda,D(\Lambda))$ (see also \cite[I. Examples 4.9 (ii)]{St2}). Thus (H1) is satisfied. Clearly (H2) holds since $(\widehat{L},D(\widehat{L}))$ satisfies the same assumptions as $(L,D(L))$. In particular, the co-form\\
\centerline{}
\centerline{$\displaystyle\widehat{\E}(f,g):= \E(g,f)$ for $(f,g)\in D(\widehat{L})\times L^2(E,\mu) \cup L^2(E,\mu)\times D(L) $}
\centerline{}
is also a generalized Dirichlet form. Though in general $\E$ is neither symmetric nor sectorial, it has the same fundamental properties as $\widehat{\E}$. Moreover, the bilinear form $\E$ is an extension of
\begin{equation*}
\int_E \langle A \nabla f , \nabla g \rangle d\mu -  \int_E \langle B,\nabla f\rangle g d\mu, \  f,g\in \{ f\in D(L^0)_{0,b}\,:\,\langle B,\nabla f\rangle \in L^2(E,\mu)\}.
\end{equation*}
Since the $L^1(E,\mu)$-version $\ttb$ of $\tt$ is a sub-Markovian $C_0$-semigroup of contractions on $L^1(E,\mu)$, one can define recurrence and transience of $\tt$. Put
$$
Nv:=\langle B,\nabla v \rangle,\  v\in D(N):=D(\E^{0,E})_b.
$$
Then $D=D(N)\cap D(\E^0)=D(N)=D(\E^{0,E})_b$ and $\E$ satisfies assumption (H3). Indeed, if $u\in D(L)_b$ and $v\in D$, then there exists a function $f\in L^2(E,\mu)$ such that $u=G_1f$. We may assume that $f\geq 0$ $\mu$-a.e. Otherwise, we put $u=u^+-u^-$ where $u^+:=G_1 f^+$ and $u^-:=G^1 f^-$. Choose an increasing sequence of functions $(f_n)_{n\geq 1} \subset L^1(E,\mu)_b$ such that $0\leq f_n \nearrow f$ $\mu$-a.e. as $n \nearrow \infty$. Then $f_n \to f$ in $L^2(E,\mu)$ and $\overline{G}_1 f_n =G_1 f_n \to G_1 f$ in $L^2(E,\mu)$ as $n \to \infty$. Furthermore, since $\overline{G}_1 f_n$ is increasing in $n$, we obtain $u_n:=\overline{G}_1 f_n \leq G_1 f$ converges to $G_1 f$ $\mu$-a.e. as $n\to \infty$. Thus $(u_n)_{n\geq 1}\subset D(\overline{L})_b$ satisfies
$$
u_n \to u\ \text{in}\ L^2(E,\mu),\ \overline{L}u_n \to Lu\ \text{in}\ L^2(E,\mu),\ u_n \nearrow u\ \mu\text{-a.e. as}\ n \to \infty
$$
and $(u_n)_{n\geq 1}$ is uniformly bounded in $n$. Applying Lemma \ref{prop}, we can see that 
$$
\sup_{n \geq 1} \E^0(u_n,u_n)<\infty
$$
and so $u_n \to u$ weakly in $D(\E^0)$ as $n \to \infty$ as well as 
$$
\E^0(u,u)\leq \liminf_{n\rightarrow \infty}\E^0(u_n,u_n)
$$
by \cite[I. Lemma 2.12]{MR}. Hence using Lemma \ref{prop} and the approximation of $u$ with $(u_n)_{n\geq 1}$, we obtain
$$
\E^0(u,u)\leq \E(u,u),\ u\in D(L)_b
$$
and
\begin{equation}\label{luv}
(-Lu,v)= \E^0(u,v)+\int_E \langle B, \nabla v \rangle u d\mu,\ u\in D(L)_b,\  v\in D
\end{equation}
which achieves the proof that (H3) is satisfied. Consequently, by Theorem \ref{tit} and Corollary \ref{res} of Subsection \ref{2.1}, we get the following facts.
\centerline{}
\begin{cor}\label{cor} $ $
\begin{itemize}
\item[(a)] If $(\E^0,D(\E^0))$ is transient, then $\tt$ is also transient.
\item[(b)] If there exists a sequence of functions $(\chi_n)_{n\geq 1} \subset D$ with $0\leq \chi_n \leq 1$, $\lim_{n\rightarrow \infty} \chi_n =1$ $\mu$-a.e. satisfying
$$
\lim_{n \rightarrow \infty} \Big( \E^0(g,\chi_n) + \int_E \langle B, \nabla \chi_n \rangle g  d\mu\Big) =0,
$$
for any non-negative bounded $g$ in the extended Dirichlet space of $D(\E^0)$, then $\tt$ is not transient.
\end{itemize}
\end{cor}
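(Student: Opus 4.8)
The plan is to reduce both assertions to the abstract results of Subsection \ref{2.1}, since the paragraphs leading up to (\ref{luv}) have already arranged for the concrete form $\E$ to fit that framework. Concretely, that discussion shows via Lemma \ref{prop} that $\E$ satisfies (H1) and (H2) (the latter because $(\widehat{L},D(\widehat{L}))$ obeys the same hypotheses as $(L,D(L))$), and that (H3) holds with $(\E^0,D(\E^0))$ the sectorial Dirichlet form generated by $A$ and with $Nv=\langle B,\nabla v\rangle$ on $D(N)=D(\E^{0,E})_b$. In particular $D=D(N)\cap D(\E^0)=D(\E^{0,E})_b$, one has $D(L)_b\subset D(\E^0)$, and $\E^0(u,u)\le\E(u,u)$ for every $u\in D(L)_b$. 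Once these identifications are recorded, nothing further is specific to the Euclidean setting.

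For part (a) I would simply invoke Theorem \ref{tit}. Its hypotheses, namely the existence of a sectorial Dirichlet form $(\E^0,D(\E^0))$ with $D(L)_b\subset D(\E^0)$ and with $\E^0(u,u)\le\E(u,u)$ on $D(L)_b$, are exactly the two properties recorded just above (\ref{luv}). Hence transience of $(\E^0,D(\E^0))$ forces transience of $\tt$, which is precisely (a).

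For part (b) I would appeal to Corollary \ref{res}(a). The sufficient condition (\ref{suf}) there reads
$$
\lim_{n\rightarrow\infty}\Big(\E^0(g,\chi_n)+\int_E gN\chi_n\,d\mu\Big)=0
$$
for all non-negative bounded $g$ in the extended Dirichlet space of $D(\E^0)$; substituting the concrete operator $N\chi_n=\langle B,\nabla\chi_n\rangle$ turns this verbatim into the hypothesis of (b). Since a sequence $(\chi_n)_{n\geq 1}\subset D$ with the stated boundedness and convergence properties is assumed, Corollary \ref{res}(a) yields that $\tt$ is not transient.

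The only genuinely non-routine ingredient lies not in the corollary itself but in the verification preceding it: establishing (H3), i.e.\ the diagonal domination $\E^0(u,u)\le\E(u,u)$ together with the integration-by-parts identity (\ref{luv}) for $u\in D(L)_b$. This requires approximating a general $u=G_1f\in D(L)_b$ by $u_n=\overline G_1 f_n$ with $0\le f_n\nearrow f$, invoking Lemma \ref{prop} to bound $\sup_n\E^0(u_n,u_n)$, and passing to the weak limit in $D(\E^0)$ through \cite[I. Lemma 2.12]{MR}; it is the $\mu$-divergence-free property (\ref{div1}) of $B$ that makes the first-order term integrate to zero on the diagonal and so prevents the perturbation from destroying the domination. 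Granting that verification, Corollary \ref{cor} is an immediate transcription of Theorem \ref{tit} and Corollary \ref{res}(a).
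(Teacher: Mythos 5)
Your proposal is correct and follows exactly the paper's own route: the text preceding the corollary verifies (H1)--(H3) for the concrete form (via Lemma \ref{prop}, the approximation $u_n=\overline{G}_1 f_n$ of $u=G_1 f$, and \cite[I. Lemma 2.12]{MR}, leading to (\ref{luv})), and the corollary is then read off from Theorem \ref{tit} for part (a) and Corollary \ref{res}(a) for part (b) with $N\chi_n=\langle B,\nabla\chi_n\rangle$. Your closing remark correctly identifies that the only substantive work is this verification of (H3), including the role of the divergence-free condition (\ref{div1}), which is precisely where the paper places it.
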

\centerline{}
\begin{rem}\label{cor2}
If we can construct a sequence of functions $(\chi_n)_{n\geq 1} \subset D$ with $0\leq \chi_n \leq 1$, $\lim_{n\rightarrow \infty} \chi_n =1$ $\mu$-a.e. satisfying
$$
\lim_{n \rightarrow \infty} \Big( \E^0(\chi_n,\chi_n) + \int_E | \langle B, \nabla \chi_n \rangle|  d\mu\Big) =0,
$$
then $(\chi_n)_{n\geq 1}$ satisfies the conditions of Corollary \ref{cor}(b). Furthermore, since $-B$ satisfies the same assumptions as $B$, the co-form is then also not transient.
\end{rem}
\centerline{}
\centerline{}
Since $T_t \equiv \overline{T}_t$ for any $t> 0$ on $L^1(E,\mu) \cap L^2(E,\mu)$, it follows that the potential operator $G$ obtained from $\tt$ (see paragraph right before Definition \ref{rec1}) is equal to the potential operator obtained from $\ttb$ (cf. Definition \ref{rec1}(c)). Hence the recurrence $($resp. transience$)$ of $\tt$ is equivalent to the recurrence $($resp. transience$)$ of $\ttb$. Next, we want to show that the recurrence of $\ttb$ implies the existence of a nice sequence of functions $(\chi_n)_{n\geq 1}$. This will be achieved in Theorem \ref{rec3} below.\\
\centerline{}
\centerline{}
Let $h\in L^\infty(E,\mu)$, $h\geq 0$ $\mu$-a.e. and let $(\E^{0,h},D(\E^0))$ be the bilinear form on  $ L^2(E,\mu)$ defined by
\centerline{}
\centerline{$\displaystyle\E^{0,h}(f,g):= \E^0(f,g)+\int_E fgh d\mu,$ $f,g\in D(\E^0)$.}
\centerline{}
Since the $\E_1^{0,h}$- and $\E_1^0$-norms are equivalent on $D(\E^0)$, $(\E^{0,h},D(\E^0))$ is also a regular Dirichlet form on $ L^2(E,\mu)$. Let $(L^{0,h},D(L^{0,h}))$ be the generator of $(\E^{0,h},D(\E^0))$. Then $D(L^{0,h})=D(L^0)$ and $L^{0,h}u=L^0 u-h\cdot u$ for $u\in D(L^0)=D(L^{0,h}).$ The following construction Lemma \ref{con1} is also proven in Section 4.
\centerline{}
\centerline{}
\begin{lem}\label{con1}
There exists a closed operator $(\overline{L}^h,D(\overline{L}^h))$ on $ L^1(E,\mu)$ which is the generator of sub-Markovian $C_0$-resolvent of contractions $(\overline{G}_\alpha^h)_{\alpha>0}$ satisfying the following properties:
\begin{itemize}
\item[(a)] $(\overline{L}^h,D(\overline{L}^h))$ is a closed extension of $L^h u:=L^{0,h}u+\langle B,\nabla u \rangle$ $u\in D(L^0)_{0,b}$ on $ L^1(E,\mu)$.
\item[(b)] $D(\overline{L}^h)_b \subset D(\E^0)$ and for $u\in D(\overline{L}^h)_b$, $v\in D(\E^{0,E})_{b}$, we have\\
$$
\E^{0,h}(u,v)-\int_E \langle B, \nabla u \rangle v d\mu=-\int_E \overline{L}^h uv d\mu
$$
and
$$
\displaystyle  \E^{0,h}(u,u)\leq -\int_E \overline{L}^h uu d\mu.
$$
\item[(c)] $D(\overline{L}^h)=D(\overline{L})$ and for $f\in L^1(E,\mu)$ with $f\geq 0$\\
$$
\overline{G}_\alpha^h f=\overline{G}_\alpha(f-h \overline{G}_\alpha ^h f).
$$
\end{itemize}
\end{lem}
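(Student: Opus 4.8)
The plan is to split the proof into two parts. Parts (a) and (b) are obtained by rerunning, essentially verbatim, the construction behind Lemma \ref{prop}, now with the perturbed sectorial form $(\E^{0,h},D(\E^0))$ and its generator $(L^{0,h},D(L^{0,h}))$ playing the roles of $(\E^0,D(\E^0))$ and $(L^0,D(L^0))$. Part (c) is then a bounded-perturbation identification of $\overline{L}^h$ with $\overline{L}-h$, from which the resolvent formula is read off.

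For (a) and (b), first I would check that all the structural hypotheses used in the proof of Lemma \ref{prop} (carried out in Section \ref{4}) are inherited by the perturbed data. Since $h\in L^\infty(E,\mu)$ with $h\geq 0$, the form $\E^{0,h}$ is a bounded, symmetric, positive perturbation of $\E^0$; the $\E^{0,h}_1$- and $\E^0_1$-norms are equivalent on $D(\E^0)$, so $(\E^{0,h},D(\E^0))$ is again a regular sectorial Dirichlet form, and its generator satisfies $D(L^{0,h})=D(L^0)$ with $L^{0,h}u=L^0u-h\cdot u$. In particular $D(L^{0,h})_{0,b}=D(L^0)_{0,b}$, so condition (C) holds verbatim for $L^{0,h}$, while $B$ is unchanged and still $\mu$-divergence free. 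Hence the identical procedure produces a sub-Markovian $C_0$-resolvent $(\overline{G}_\alpha^h)_{\alpha>0}$ whose generator $(\overline{L}^h,D(\overline{L}^h))$ extends $L^hu=L^{0,h}u+\langle B,\nabla u\rangle$ on $D(L^0)_{0,b}$ and yields both displayed identities of (b) with $\E^{0,h}$ in place of $\E^0$; the energy inequality again follows by setting $v=u$ and using $\int_E\langle B,\nabla u\rangle u\,d\mu\geq 0$ in the approximation, exactly as in Lemma \ref{prop}.

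For (c) the key observation is that, because $h\in L^\infty(E,\mu)$, multiplication by $h$ is a bounded operator on $L^1(E,\mu)$ of norm at most $\|h\|_{L^\infty(\mu)}$. On $D(L^0)_{0,b}$ one has $\overline{L}^hu=L^0u-h\cdot u+\langle B,\nabla u\rangle=\overline{L}u-h\cdot u$, so $\overline{L}^h$ and the bounded perturbation $\overline{L}-h$ (with domain $D(\overline{L})$) agree on this set. Since $D(L^0)_{0,b}$ is an operator core for $\overline{L}^h$ by construction and $\overline{L}-h$ is a closed operator, $\overline{L}-h$ extends the closed operator $\overline{L}^h$; as both $\overline{L}^h$ and $\overline{L}-h$ generate $C_0$-semigroups, one generator extending another forces equality, so $D(\overline{L}^h)=D(\overline{L})$ and $\overline{L}^h=\overline{L}-h$.

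Finally I would read off the resolvent identity. Given $f\in L^1(E,\mu)$ with $f\geq 0$ and $\alpha>0$, set $u:=\overline{G}_\alpha^hf\in D(\overline{L}^h)=D(\overline{L})$. Then $\alpha u-\overline{L}^hu=f$, i.e. $(\alpha-\overline{L})u=f-h\cdot u\in L^1(E,\mu)$, and applying $\overline{G}_\alpha$ gives $u=\overline{G}_\alpha(f-h\,\overline{G}_\alpha^hf)$, which is the asserted formula. I expect the only genuinely delicate point to be the identification in (c): one must be sure that $D(L^0)_{0,b}$ is an \emph{operator core} for $\overline{L}^h$, and not merely a set on which $\overline{L}^h$ and $\overline{L}-h$ coincide, so that $\overline{L}-h$ extends all of $\overline{L}^h$ rather than just a restriction of it. This is exactly what the construction of Lemma \ref{prop}, applied to $L^{0,h}$, delivers, which is one reason it is worth carrying that construction out explicitly in Section \ref{4}.
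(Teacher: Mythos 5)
Your treatment of (a) and (b) is essentially the paper's: localize to $V\subset\subset E$, note that the $\E_1^{0}$- and $\E_1^{0,h}$-norms are equivalent so that $(\E^{0,h},D(\E^{0,V}))$ is again a regular sectorial Dirichlet form with $D(L^{0,h,V})=D(L^{0,V})$ and $L^{0,h,V}u=L^{0,V}u-h\cdot u$, and rerun the construction of Lemma \ref{prop} (closure of the localized operator on $L^1(V,\mu)$, then a monotone limit of the localized resolvents over $V_n\nearrow E$). That part is sound.

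Part (c), however, has a genuine gap, and it sits exactly at the point you flagged. Your identification $\overline{L}^h=\overline{L}-h$ rests on $D(L^0)_{0,b}$ being an \emph{operator core} for $\overline{L}^h$, and you assert that the construction of Lemma \ref{prop} delivers this. It does not. The construction yields the core property only for the \emph{localized} operators: $\overline{L}^{h,V}$ is by definition the closure of $L^{h,V}$ on $D(L^{0,V})_b$ in $L^1(V,\mu)$. The global operator $(\overline{L}^h,D(\overline{L}^h))$ is defined as the generator of the monotone limit $\overline{G}^h_\alpha f=\lim_{n\to\infty}\overline{G}^{h,V_n}_\alpha(f\cdot 1_{V_n})$, and this produces a closed \emph{extension} of $L^h$ on $D(L^0)_{0,b}$ whose domain may be strictly larger than the closure; the paper states this explicitly right after the proof of Lemma \ref{prop}: $(\overline{L},D(\overline{L}))$ is a closed extension of $(L,D(L^0)_{0,b})$ on $L^1(E,\mu)$, \emph{but not necessarily the closure}, and the same caveat applies to $\overline{L}^h$. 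Without the core property, your chain ``$\overline{L}-h$ is closed and agrees with $\overline{L}^h$ on a core, hence extends it, hence equals it'' collapses at the first step: agreement on $D(L^0)_{0,b}$ alone gives no inclusion between the two closed operators in either direction. The paper's proof of (c) is precisely the substitute for this missing step. It uses the local identification $D(\overline{L}^{h,V_n})=D(\overline{L}^{V_n})$, $\overline{L}^{h,V_n}=\overline{L}^{V_n}-h$ (legitimate there, by equivalence of the graph norms on the local closures), writes $\overline{G}^{V_n}_\alpha(f\cdot 1_{V_n})=\overline{G}^{h,V_n}_\alpha g_n$ with $g_n=f\cdot 1_{V_n}+h\,\overline{G}^{V_n}_\alpha(f\cdot 1_{V_n})\ge 0$, and then passes to the limit, controlling the error via
$$
\|\overline{G}^{h,V_n}_\alpha(g\cdot 1_{V_n})-\overline{G}^{h,V_n}_\alpha g_n\|_{L^1(\mu)}\le\tfrac{1}{\alpha}\|h\|_{L^\infty(\mu)}\|\overline{G}_\alpha f-\overline{G}^{V_n}_\alpha(f\cdot 1_{V_n})\|_{L^1(\mu)}\longrightarrow 0,
$$
to conclude $\overline{G}_\alpha f=\overline{G}^h_\alpha(f+h\,\overline{G}_\alpha f)$ and, by the symmetric argument, $\overline{G}^h_\alpha f=\overline{G}_\alpha(f-h\,\overline{G}^h_\alpha f)$; this yields $D(\overline{L})=D(\overline{L}^h)$ together with the resolvent formula. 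Your bounded-perturbation shortcut cannot replace this monotone-limit comparison of resolvents unless you separately prove the global core property, which is not available here.
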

\centerline{}
Let $\varepsilon>0$ (be a constant) and let $h(\nequiv \e)$ be as in the paragraph preceding Lemma \ref{con1}. Consider the Hilbert space $L^2(E,(h+\varepsilon)\mu).$ Since
$$
\varepsilon \cdot (f,f) \leq (f,f)_{L^2((h+\varepsilon)\mu)} \leq ( \varepsilon+ \|h\|_{L^\infty(\mu)} ) \cdot (f,f)
$$
for any $f\in  L^2(E,\mu)$, $(\E^0,D(\E^0))$ is a regular Dirichlet form on $L^2(E,(h+\varepsilon)\mu)$ whose Dirichlet norm is equivalent to the norm of $(\E^0,D(\E^0))$ on $ L^2(E,\mu)$. Let $(L^{0,\varepsilon},D(L^{0,\varepsilon}))$ be the generator of $(\E^0,D(\E^0))$ on $L^2(E,(h+\varepsilon)\mu)$. Then $D(L^0)=D(L^{0,\e})$ and for $f\in D(L^0)$ and $g\in D(\E^0),$
$$
\E^0(f,g)=(-L^0f,g)=(-L^{0,\e}f,g)_{L^2((h+\e)\mu)}.
$$
It follows that $L^{0,\e}f= \frac{1}{h+\e}L^0 f$ for any $f\in D(L^{0,\e})$. For $V\subset \subset E$, since $L^2(\mu)$- and $L^2((h+\e)\mu)$-norms are equivalent and $(\E^0,C_0^\infty(E))$ is closable on $L^2(E,\mu)$, $(\E^0,C_0^\infty(V))$ is also closable on $L^2(V,(h+\e)\mu)$. Denote the closure of $(\E^0,C_0^\infty (V))$ on $L^2(V,(h+\e)d\mu)$ by $(\E^{0,V},D(\E^{0,\e,V}))$. Then it is easy to show that $D(\E^{0,\e,V})=D(\E^{0,V})$. Let
$$
B^\e(x):=\frac{1}{h(x)+\e}B(x).
$$
By (\ref{div1}),
\begin{equation*}
 \int_E \langle B^\e, \nabla f  \rangle(h+\e) d\mu=0
\end{equation*}
for any $f\in \ci$.
\centerline{}
\centerline{}
\begin{lem}\label{con2}
There exists a closed operator $(\overline{L}^\e,D(\overline{L}^\e))$ on $L^1(E,(h+\e)\mu)$ which is the generator of sub-Markovian $C_0$-resolvent of contractions $(\overline{G}_\alpha^\e)_{\alpha>0}$ satisfying the following properties:
\begin{itemize}
\item[(a)] $(\overline{L}^\e,D(\overline{L}^\e))$ is a closed extension of $L^\e u:=L^{0,\e}u+\langle B^\e,\nabla u \rangle$ $u\in D(L^{0,\e})_{0,b}$ on $L^1(E,(h+\e)\mu)$.
\item[(b)] $D(\overline{L}^\e)_b \subset D(\E^0)$ and for $u\in D(\overline{L}^\e)_b$, $v\in D(\E^{0,E})_{b}$, we have\\
$$
 \E^0(u,v)-\int_E \langle B^\e, \nabla u \rangle v (h+\e) d\mu=-\int_E \overline{L}^\e uv (h+\e) d\mu
$$
and
$$
\E^0(u,u)\leq -\int_E \overline{L}^\e uu (h+\e) d\mu.
$$
\item[(c)] $D(\overline{L}^\e)=D(\overline{L})$ and for $f\in L^1(E,(h+\e)\mu) $ with $f\geq 0$\\
$$
\overline{G}_\alpha^\e f=\overline{G}_\alpha((h+\e)f+\alpha(1-(h+\e))\overline{G}_\alpha^\e f).
$$
\end{itemize}
\end{lem}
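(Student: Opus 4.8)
The plan is to reduce everything to the construction already performed for Lemma~\ref{prop} (carried out in Section~\ref{4}), applied to the rescaled data $((h+\e)\mu,\,L^{0,\e},\,B^\e)$ in place of $(\mu,\,L^0,\,B)$, and then to read off (c) from the multiplicative identity $\overline{L}^\e=\frac{1}{h+\e}\overline{L}$, exactly as Lemma~\ref{con1}(c) is read off from $\overline{L}^h=\overline{L}-h$.

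First I would verify that all the ingredients feeding the construction of Lemma~\ref{prop} survive the change of measure. By the paragraph preceding the statement, $(\E^0,D(\E^0))$ is a regular sectorial Dirichlet form on $L^2(E,(h+\e)\mu)$ with Dirichlet norm equivalent to the one on $L^2(E,\mu)$, and (\ref{div1}) shows that $B^\e$ is $(h+\e)\mu$-divergence free. Because $\e\le h+\e\le \e+\|h\|_{L^\infty(\mu)}$, the spaces $L^1(E,\mu)$ and $L^1(E,(h+\e)\mu)$ coincide as sets with equivalent norms; consequently $B^\e\in L^2_{loc}(E,\R^d,(h+\e)\mu)$, and since $D(L^{0,\e})_{0,b}=D(L^0)_{0,b}$, condition~(C) for $\mu$ yields condition~(C) for the rescaled data. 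Thus the construction of Lemma~\ref{prop} applies verbatim and produces a closed operator $(\overline{L}^\e,D(\overline{L}^\e))$ on $L^1(E,(h+\e)\mu)$ generating a sub-Markovian $C_0$-semigroup, extending $L^\e u=L^{0,\e}u+\langle B^\e,\nabla u\rangle$ on $D(L^{0,\e})_{0,b}$, together with the energy inequality and the integration-by-parts formula of (b) (now with all integrals taken against $(h+\e)\,d\mu$). This gives (a) and (b).

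For (c) I would exploit that on the common core $D(L^0)_{0,b}=D(L^{0,\e})_{0,b}$, using $L^{0,\e}=\frac{1}{h+\e}L^0$ and $B^\e=\frac{1}{h+\e}B$,
$$
L^\e u=L^{0,\e}u+\langle B^\e,\nabla u\rangle=\tfrac{1}{h+\e}\big(L^0u+\langle B,\nabla u\rangle\big)=\tfrac{1}{h+\e}Lu.
$$
Since multiplication by $\frac{1}{h+\e}$ is a bounded linear bijection of $L^1(E,\mu)$ with bounded inverse (as $\frac{1}{h+\e}$ is bounded above by $\e^{-1}$ and bounded away from $0$), the operator $\frac{1}{h+\e}\overline{L}$ is closed with domain $D(\overline{L})$ and extends $L^\e$ on the core; identifying it with the generator $\overline{L}^\e$ then gives $\overline{L}^\e=\frac{1}{h+\e}\overline{L}$ and $D(\overline{L}^\e)=D(\overline{L})$. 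The resolvent identity is now pure algebra: for $f\ge 0$ put $u:=\overline{G}_\alpha^\e f\in D(\overline{L}^\e)=D(\overline{L})$, so $\alpha u-\frac{1}{h+\e}\overline{L}u=f$, whence $\overline{L}u=\alpha(h+\e)u-(h+\e)f$ and
$$
(\alpha-\overline{L})u=\alpha u-\alpha(h+\e)u+(h+\e)f=(h+\e)f+\alpha\big(1-(h+\e)\big)u.
$$
As the right-hand side lies in $L^1(E,\mu)$, applying $\overline{G}_\alpha$ yields the claimed formula.

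\emph{Main obstacle.} The transfer of hypotheses in the second step is routine. The one point demanding genuine care is the identification $\overline{L}^\e=\frac{1}{h+\e}\overline{L}$: one must check that $D(L^0)_{0,b}$ is a core for the constructed operator $\overline{L}^\e$, so that the two closed extensions of $L^\e$ — namely $\overline{L}^\e$ and $\frac{1}{h+\e}\overline{L}$ — actually coincide by uniqueness of the generator on a core. This is exactly the step where the approximation scheme of \cite{St1} underlying Lemma~\ref{prop} must be invoked, and it is cleanest to handle it in parallel with the analogous identification in Lemma~\ref{con1}(c).
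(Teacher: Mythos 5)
Your parts (a) and (b) are sound and essentially match the paper: the construction behind Lemma \ref{prop} is indeed applied to the rescaled data $((h+\e)\mu, L^{0,\e}, B^\e)$ (the paper redoes it locally, obtaining its items (f) and (g)), and your verification that the hypotheses survive the change of measure is the same routine check. Your algebra deriving the displayed resolvent formula \emph{from} the operator identity $\overline{L}^\e=\frac{1}{h+\e}\overline{L}$ with $D(\overline{L}^\e)=D(\overline{L})$ is also correct. The genuine gap is exactly the step you flag in part (c): you propose to identify $\overline{L}^\e$ with $\frac{1}{h+\e}\overline{L}$ by showing that $D(L^0)_{0,b}$ is a core for $\overline{L}^\e$ and invoking uniqueness of closed (or maximal dissipative) extensions from a core. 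This cannot work as stated. The global operators here are not defined as closures of $L^\e$ on $D(L^0)_{0,b}$; they are defined as generators of the monotone limits $\overline{G}_\alpha f=\lim_{n}\overline{G}^{V_n}_\alpha(f\cdot 1_{V_n})$ along an exhaustion, and the paper explicitly cautions, right after the proof of Lemma \ref{prop}, that $(\overline{L},D(\overline{L}))$ is a closed extension of $(L,D(L^0)_{0,b})$ \emph{but not necessarily the closure}. Moreover, maximal dissipativity cannot break the tie: both $\overline{L}^\e$ and $\frac{1}{h+\e}\overline{L}$ are generators of contraction semigroups on $L^1(E,(h+\e)\mu)$ extending $L^\e|_{D(L^0)_{0,b}}$, and two generators extending the same operator on a non-core domain may well differ (Dirichlet versus Neumann boundary conditions is the standard example --- and boundary behaviour is precisely what is at stake when one takes limits of the local resolvents $\overline{G}^{V_n}_\alpha$).

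The paper's proof of (c) performs the identification where it \emph{is} available, namely locally: on each $V_n$ the operators $\overline{L}^{V_n}$ and $\overline{L}^{\e,V_n}$ are closures of the common core $D(L^{0,V_n})_b$ with equivalent graph norms, whence $D(\overline{L}^{\e,V_n})=D(\overline{L}^{V_n})$ and $\overline{L}^{\e,V_n}=\frac{1}{h+\e}\overline{L}^{V_n}$ (item (h) of the paper's proof). The global statement is then obtained by a resolvent comparison along the exhaustion: for $u=\overline{G}^\e_\alpha f=\lim_{n}\overline{G}^{\e,V_n}_\alpha(f\cdot 1_{V_n})$ one writes $\overline{G}^{\e,V_n}_\alpha(f\cdot 1_{V_n})=\overline{G}^{V_n}_\alpha g_n$ with $g_n=(h+\e)f\cdot 1_{V_n}+\alpha(1-(h+\e))\overline{G}^{\e,V_n}_\alpha(f\cdot 1_{V_n})$, and proves by an explicit $L^1$-estimate (using $\|h\|_{L^\infty(\mu)}<\infty$, $h+\e\geq\e$, and the $L^1$-convergence of the local resolvents) that $\lim_{n}\overline{G}^{V_n}_\alpha g_n=\lim_{n}\overline{G}^{V_n}_\alpha(g\cdot 1_{V_n})=\overline{G}_\alpha g$, where $g=(h+\e)f+\alpha(1-(h+\e))\overline{G}^\e_\alpha f$. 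This yields simultaneously $D(\overline{L}^\e)\subset D(\overline{L})$ and the implicit formula of (c); the reverse inclusion is symmetric, and the pointwise identity $\overline{L}^\e u=\frac{1}{h+\e}\overline{L}u$ (for $u\in D(\overline{L})_b$) is deduced in the paper only \emph{after} the lemma, from (b) and (c), not the other way around. To repair your proof you must replace the core argument by this local-to-global resolvent comparison.
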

\centerline{}
Lemma \ref{con2} is also proven in Section 4. Lemmas \ref{prop}, \ref{con1} and \ref{con2} assert that $D(\overline{L})=D(\overline{L}^h)=D(\overline{L}^\e)$ and for $u\in
 D(\overline{L})_b$, we have
$$
\int_E(\overline{L}u-\overline{L}^hu-hu)v d\mu =0, \  \int_E(\overline{L}u-(h+\e)\overline{L}^\e u)v d\mu=0,
$$
for any $v\in D(\E^{0,E})_b$. Since $D(\E^{0,E})_b \subset L^\infty(E, \mu)$ densely, we obtain for any $u\in D(\overline{L})_b,$\\
$$
\overline{L}^h u = \overline{L}u-h\cdot u\ \ \text{and}\ \ \overline{L}^\e u= \frac{1}{h+\e}\overline{L} u.
$$
\centerline{}
\begin{theo}\label{rec3}
If $\ttb$ is recurrent, then there exists a sequence of functions $(\chi_n)_{n\geq 1}$ in $D(\overline{L})_b$ with $0\leq \chi_n \leq 1$ and  $\lim_{n \rightarrow \infty} \chi_n =1$ $\mu$-a.e. satisfying $\lim_{n \rightarrow \infty} (-\overline{L} \chi_n, \chi_n)=0$. Furthermore, $\lim_{n \rightarrow \infty} -\overline{L} \chi_n=0$ $\mu$-a.e. and in $L^1(E,\mu)$. In particular, $(\widetilde{\E}^0,D(\E^0))$ is recurrent (see \cite[Theorem 1.6.3]{FOT} and Lemma \ref{prop}(b)).
\end{theo}
\begin{proof}
Let us choose $h\in L^1(E,\mu)_b$ with $h>0$ $\mu$-a.e. and $\e>0$. Then we know that by Lemma \ref{con1}, $\overline{G}^h_\e (\e f+fh)\in D(\overline{L})_b$ for any $f\in L^1(E,\mu)_b$ with $f\geq 0$. Observe
\begin{align*}
(1-\overline{L}^\e) \overline{G}_\e ^h (\e f+fh) &= \overline{G}^h_\e (\e f +f h)- \frac{1}{h+\e}\overline{L}\  \overline{G}^h_\e (\e f+ fh) \\
&= \overline{G}^h_\e (\e f +f h)- \frac{1}{h+\e}(\overline{L}^h-\e +h +\e )\overline{G}^h_\e (\e f+ fh) \\
&= \overline{G}^h_\e (\e f +f h)+ \frac{1}{h+\e}(\e f+fh)-\overline{G}^h_\e (\e f+ fh)=f 
\end{align*}
$\mu$-a.e. Consequently $\overline{G}^h_\e (\e f+fh)=\overline{G}^\e_1 f$. Thus if $0\leq f \leq 1$, then $0\leq \overline{G}^h_\e(\e f +fh)\leq 1$ for all $\e>0$.  Choosing $(f_n)_{n \geq 1} \subset L^1(E,\mu)_b,$ $f_n \geq 0$ for $n\geq 1$, $f_n \nearrow 1$ $\mu$-a.e. as $n\nearrow \infty$, we obtain
$$
0\leq \overline{G}_\e^h h=\lim_{n \rightarrow \infty} \overline{G}_\e^h(f_n h) \leq \limsup_{n \rightarrow \infty}\ \overline{G}_\e^h(\e f_n+f_nh)\leq 1.
$$
Letting $\e \to 0$, it follows $0\leq G^h h\leq 1$ $\mu$-a.e. where $G^h$ is the potential operator associated with $(\overline{G}^h_\alpha)_{\alpha>0}$.
Then using Lemma \ref{con1}(c), we get
$$
0\leq G(h(1-G^hh))=\lim_{\e \rightarrow 0} \overline{G}_\e (h(1-G^h h))= \lim_{\e \rightarrow 0}\overline{G}_\e (h-h \overline{G}^h_\e h)=\lim_{\e \rightarrow 0}\overline{G}_\e^h h =G^h h\leq 1.
$$
Since $\ttb$ is recurrent and $h(1-G^h h) \in L^1(E,\mu)_b$, hence by Definition \ref{rec1}(b) $G(h(1-G^h h))=0$ $\mu$-a.e. Consequently, $\overline{G}_1(h(1-G^h h))=0$ and by injectivity of $\overline{G}_1$, $G^h h=1$ $\mu$-a.e. If we put $\chi_n:= \overline{G}^h_{\frac{1}{n}}h$ for $n\geq 1$, then $0\leq \chi_n \leq 1$ and $\chi_n \nearrow 1$ $\mu$-a.e. as $n\nearrow \infty$. Moreover for all $n\geq 1$,
\begin{align*}
0\leq (-\overline{L} \chi_n, \chi_n) &= -\int_E \overline{L} \ \overline{G}^h_{\frac{1}{n}}h \chi_n d\mu = -\int_E \overline{L}\  \overline{G}_{\frac{1}{n}}(h-h\overline{G}^h_{\frac{1}{n}}h) \chi_n d\mu \\
&=-\frac{1}{n} \int_E \overline{G}_\frac{1}{n}(h-h\chi_n) \chi_n d\mu +\int_E (h-h\chi_n) \chi_n d\mu\\
&\leq \int_E h(1-\chi_n) d\mu
\end{align*}
and so $\lim_{n \rightarrow \infty} (-\overline{L} \chi_n, \chi_n)=0$.
\end{proof}\\
\centerline{}
\begin{defn}\label{cons1} $\ttb$ is said to be conservative if for some (and hence any) $t>0$, $\overline{T}_t 1=1$ $\mu$-a.e.
\end{defn}
\centerline{}
It is well known that if the Dirichlet form $(\E^0,D(\E^0))$ is strictly irreducible recurrent, then it is conservative (cf. \cite[Lemma 1.6.5]{FOT} and \cite[Corollary 1.3.8]{O13}). We have the following similar result in the non-sectorial situation of this section.\\
\begin{cor}\label{cons3} If $\ttb$ is recurrent, then it is conservative.
\end{cor}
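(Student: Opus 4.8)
The plan is to feed the sequence $(\chi_n)_{n\geq 1}$ produced by Theorem \ref{rec3} into the integrated form of the semigroup. Recall that this sequence lies in $D(\overline{L})_b$, satisfies $0\leq\chi_n\leq 1$ and $\chi_n\nearrow 1$ $\mu$-a.e. (the monotonicity being explicit in the construction $\chi_n=\overline{G}^h_{1/n}h$), and, crucially, $\overline{L}\chi_n\to 0$ in $L^1(E,\mu)$. Each $\chi_n$ is bounded and lies in $L^1(E,\mu)$, hence in $D(\overline{L})$, so that the fundamental identity for the $C_0$-semigroup $\ttb$ on $L^1(E,\mu)$ gives
$$
\overline{T}_t\chi_n-\chi_n=\int_0^t \overline{T}_s\overline{L}\chi_n\,ds
$$
in $L^1(E,\mu)$ for every $t>0$.

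First I would estimate the right-hand side using that each $\overline{T}_s$ is an $L^1(E,\mu)$-contraction, so that $\|\overline{T}_s\overline{L}\chi_n\|_{L^1(\mu)}\leq\|\overline{L}\chi_n\|_{L^1(\mu)}$ and therefore
$$
\|\overline{T}_t\chi_n-\chi_n\|_{L^1(\mu)}\leq t\,\|\overline{L}\chi_n\|_{L^1(\mu)}\longrightarrow 0\quad(n\to\infty).
$$
Thus, for each fixed $t>0$, $\overline{T}_t\chi_n-\chi_n\to 0$ in $L^1(E,\mu)$.

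Next I would identify the $\mu$-a.e. limit of the same quantity. Since $\chi_n\in L^2(E,\mu)_b$ with $\chi_n\nearrow 1$ and $\overline{T}_t$ is positivity preserving, the functions $\overline{T}_t\chi_n$ increase and, by the construction of the $L^\infty$-extension of the semigroup recalled in Subsection \ref{2.1} (which is the sense in which $\overline{T}_t1$ is understood in Definition \ref{cons1}), $\overline{T}_t\chi_n\nearrow\overline{T}_t1$ $\mu$-a.e. Together with $\chi_n\to 1$ $\mu$-a.e., this yields $\overline{T}_t\chi_n-\chi_n\to\overline{T}_t1-1$ $\mu$-a.e.

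Finally, passing to a subsequence along which the $L^1(E,\mu)$-convergence of the second step also holds $\mu$-a.e., the $\mu$-a.e. limit $\overline{T}_t1-1$ must agree with the $L^1$-limit $0$. Hence $\overline{T}_t1=1$ $\mu$-a.e. for every $t>0$, which is precisely conservativeness of $\ttb$. I expect the only delicate point to be the identification of the monotone pointwise limit $\overline{T}_t\chi_n\nearrow\overline{T}_t1$, i.e. checking that the $L^\infty$-extension of $(\overline{T}_t)_{t>0}$ is compatible with the monotone limit $\chi_n\nearrow 1$; the remaining steps follow directly from $L^1$-contractivity and the fundamental theorem of calculus for $C_0$-semigroups.
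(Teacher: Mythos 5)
Your proof is correct, and it reaches the conclusion by a genuinely different route than the paper. Both arguments rest on the same key input from Theorem \ref{rec3}, namely $-\overline{L}\chi_n\to 0$ in $L^1(E,\mu)$ with $0\le\chi_n\le 1$, $\chi_n\to 1$ $\mu$-a.e.; they differ in how this is converted into $\overline{T}_t1=1$. The paper tests $-\overline{L}\chi_n$ against the bounded co-potential $\widehat{G}_1f$ ($f\in L^1(E,\mu)_b$, $f>0$), obtains $\lim_{n\to\infty}\E(\chi_n,\widehat{G}_1f)=0$, and then invokes \lq\lq well-known standard arguments\rq\rq, i.e. the duality computation $\E(\chi_n,\widehat{G}_1f)=(\chi_n,f)-(\chi_n,\widehat{G}_1f)$, which in the limit yields $\int_E\widehat{G}_1f\,d\mu=\int_E f\,d\mu$ and hence, via $\int_E\widehat{T}_tf\,d\mu=\int_E f\,\overline{T}_t1\,d\mu$, $\overline{T}_t1\le 1$ and $f>0$, forces $\overline{T}_t1=1$ $\mu$-a.e. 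You work instead entirely on the $L^1$-semigroup side, never touching the co-resolvent or the form $\E$: the identity $\overline{T}_t\chi_n-\chi_n=\int_0^t\overline{T}_s\overline{L}\chi_n\,ds$ and $L^1$-contractivity give $\overline{T}_t\chi_n-\chi_n\to 0$ in $L^1(E,\mu)$, and the monotone construction $\chi_n=\overline{G}^h_{1/n}h\nearrow 1$ together with the definition of the $L^\infty$-extension in Subsection \ref{2.1} (which is indeed the meaning of $\overline{T}_t1$ in Definition \ref{cons1}) identifies the $\mu$-a.e. limit as $\overline{T}_t1-1$; a subsequence argument then finishes. What each approach buys: yours is more elementary and self-contained, making explicit precisely the step the paper delegates to \lq\lq standard arguments\rq\rq; its one extra requirement is the monotonicity of $(\chi_n)_{n\geq 1}$, which is not part of the statement of Theorem \ref{rec3} but, as you correctly note, is explicit in its proof. (In fact you could dispense even with that: positivity preservation and the definition of the extension give $\overline{T}_t\chi_n\le\overline{T}_t1\le 1$, so that $\overline{T}_t1-1\ge(\overline{T}_t\chi_n-\chi_n)+(\chi_n-1)\to 0$ $\mu$-a.e. along a subsequence, which already suffices.) The paper's duality route, by contrast, uses nothing about the sequence beyond the statement of Theorem \ref{rec3} and stays within the Dirichlet-form formalism, following the pattern of argument familiar from the symmetric theory in \cite{FOT}.
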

\begin{proof}
Let $f\in L^1(E,\mu)_b$ with $f>0$. Then by Theorem \ref{rec3}, there exists $(\chi_n)_{n\geq 1}\subset D(\overline{L})_b$ such that $\lim_{n \rightarrow \infty} -\overline{L} \chi_n =0$ in $L^1(E,\mu)$. Consequently, we obtain
$$
\lim_{n \rightarrow \infty}\E(\chi_n, \widehat{G}_1 f)  =\lim_{n \rightarrow \infty}-\int_E \overline{L} \chi_n \widehat{G}_1 f d\mu=0.
$$
From this, the conservativeness of $\ttb$ follows by well-known standard arguments.
\end{proof}
\centerline{}
\centerline{}

\subsection{Explicit conditions for recurrence}\label{3.1}
Now, we shall find an explicit sequence of functions to determine recurrence of $\tt$. Assume that there exists a non-negative continuous function $\rho$ on $E$ with
$$
\nabla \rho \in L^\infty_{loc}(E,\R^d,\mu)
$$
such that for $r>0$
$$
E_r:=\{x\in E:\  \rho(x)< r\}
$$
is a relatively compact open set in $E$ and $\cup_{r> 0} E_r =E$. For instance, if $E$ is closed and so in particular if  $E=\R^d$, we may choose $\rho(x)=|x|$. Define for $ r > 0$,
\begin{equation}\label{v1}
v_1(r):= \int_{E_r} \langle A(x)\nabla \rho(x), \nabla \rho(x) \rangle \mu(dx).
\end{equation}
Since $E_r$ is increasing in $r$, we may assume that $v_1(r)> 0$ for $r>0$. From \cite[Theorem 3]{Stu1}, if
\begin{equation}\label{vr}
\int_1^\infty \frac{r}{v_1(r)}dr=\infty,
\end{equation}
then the symmetric Dirichlet form $(\widetilde{\E}^0,D(\E^0))$ is recurrent. Furthermore, starting from (\ref{vr}) we can explicitly construct a sequence of functions $(\chi_n)_{n\geq 1}\subset D(\E^{0,E})_{b}$ such that $0\leq \chi_n \leq 1,$ $\lim_{n \rightarrow \infty} \chi_n =1$ $\mu$-a.e. and $\lim_{n \rightarrow \infty} \E^0 (\chi_n, \chi_n)=0$. Indeed, let 
$$
a_n:= \int_1^n \frac{r}{v_1(r)}dr,
$$
then $a_n \geq 0$, $a_n$ is finite for all $n\geq 1$ and $\lim_{n\rightarrow \infty} a_n =\infty.$ Let
$$
\psi_n(r):= \begin{cases}\ 1 \qquad \qquad \qquad \qquad &0\leq r \leq 1,\\
\ \displaystyle 1-\frac{1}{a_n}\int_{1}^{r} \frac{t}{v_1(t)}dt &1\leq r \leq n,\\
\ 0&  n\leq r.
\end{cases}
$$
Then $\lim_{n \rightarrow \infty}\psi_n(r)=1$ $dr$-a.e. Let $\chi_n(x):= \psi_n(\rho(x))$. Since the support of $\psi_n(r)$ is $[0,n]$, the support of $\chi_n$ is $\overline{E}_n$. Similarly to \cite[Theorem 2.2]{GT}, we can show that $\chi_n \in D(\E^{0,E})_{b}$. We have $\nabla \chi_n(x)=-\frac{1}{a_n}1_{\overline{E}_n \setminus \overline{E}_1}(x)\frac{\rho(x)}{v_1(\rho(x))} \nabla \rho(x)$. Hence by the transformation theorem for $n\geq 1$,
$$
\E^0(\chi_n, \chi_n) = \int_{E_n\setminus E_1} \langle A(x) \nabla \chi_n(x), \nabla \chi_n(x)\rangle  \mu(dx)=\frac{1}{a_n^2} \int_1^n  \frac{r^2}{v_1(r)^2} \nu_1(dr)
$$
where $\nu_1$ is the unique measure on $([0,\infty), \B([0,\infty)))$ which has $v_1$ as distribution function. Let $\eta$ be a standard mollifier on $\R$. Set $\eta_\e(r)=\frac{1}{\e}\eta(\frac{r}{\e})$ so that $\int_\R \eta_\e(r) dr=1$. Let
$$
v_1^\e (r):= \int_\R v_1(r-t)\eta_\e(t)dt.
$$
Then since $v_1$ is continuous and strictly increasing, $v_1^\e$ is also continuous and strictly increasing and $v_1^\e$ uniformly converges to $v_1$ as $\e \to 0$ on each compact set in $[0,\infty)$. Let $\nu_1^\varepsilon$ be the unique measure on $([0,\infty), \B([0,\infty)))$ which has $v_1^\e$ as distribution function. Then, for any continuous function $f$, we have
$$
\int_1^n f(r) \nu_1^\e (dr)=\int_1^n f(r) v_1^\e(r)' dr \longrightarrow \int_1^n f(r) \nu_1(dr)
$$
as $\varepsilon \to 0$. Consequently,
\begin{align*}
\int_1^n \frac{r^2}{v_1(r) ^2} \nu_1(dr)&= \lim_{\e\rightarrow 0} \int_1^n \frac{r^2}{v_1(r) ^2}v_1 ^\e(r)'dr \\
&=\lim_{\e\rightarrow 0} \int_1^n \frac{r^2}{v_1 ^\e(r) ^2}v_1 ^\e(r)'dr \\
&=\lim_{\e\rightarrow 0} \int_1^n r^2 \frac{d}{dr}\Big ( \frac{-1}{v_1^\e(r)} \Big) dr\\
&=2\int_1^n \frac{r}{v_1(r)}dr +\frac{1}{v_1 (1)}-\frac{n^2}{v_1(n)}.
\end{align*}
Thus, $\E^0(\chi_n,\chi_n)\leq \frac{2}{a_n}+\frac{1}{a_n ^2 v_1(1)}$. Since the last term tends to $0$ as $n\to \infty$, there exists a sequence of functions $(\chi_n)_{n\geq 1} \subset D(\E^{0,E})_{b} $ with $0\leq \chi_n \leq 1$, $\lim_{n\rightarrow \infty} \chi_n =1$ $\mu$-a.e. satisfying
$$
\lim_{n \rightarrow \infty} \E^0(\chi_n,\chi_n) =0.
$$
\centerline{}
Now, we present an explicit sufficient condition for recurrence of $\tt$.  Let
\begin{equation}\label{v2}
v_2(r):= \int_{E_r} \rho(x)\cdot | \langle B(x) ,\nabla \rho(x)\rangle | \mu(dx)
\end{equation}
and $\nu_2$ be the measure on $([0,\infty), \B([0,\infty)))$ which has $v_2$ as distribution function. Let
\begin{equation}\label{v1p2}
v(r):=v_1(r)+v_2(r)
\end{equation}
and $\nu$ be the measure on $([0,\infty), \B([0,\infty)))$ which has $v$ as distribution function. Then it is easy to see that $\nu(A)\geq \nu_i(A)$ for $A\in \B([0,\infty))$, $i=1,2$.\\
\begin{theo}\label{ee}
Let $v_1$, $v_2$, $v$ be defined as in (\ref{v1}), (\ref{v2}) and (\ref{v1p2}). If the sequence $(a_n)_{n\ge 1}$ defined by
$$
a_n:=\int_1^n \frac{r}{v(r)}dr, \ n\ge 1,
$$
satisfies $\lim_{n \rightarrow \infty}a_n=\infty$ and $\lim_{n \rightarrow \infty} \frac{\log (v_2(n)\vee 1)}{a_n}=0$, 
then $\tt$ is not transient. In particular, if $\tt$ is additionally strictly irreducible, then  $\tt$ is recurrent.
\end{theo}
\begin{proof} In view of Corollary \ref{res}(b), the last assertion follows from the first one. Concerning the first one, it follows from Remark \ref{cor2}, that it suffices to construct a sequence of functions $(\chi_n)_{n\geq 1} \subset D(\E^{0,E})_{b}$ with $0\leq \chi_n \leq 1$, $\lim_{n\rightarrow \infty} \chi_n =1$ $\mu$-a.e. satisfying
\begin{equation}\label{ee1}
\lim_{n \rightarrow \infty} \Big( \E^0(\chi_n,\chi_n) + \int_{E} | \langle B, \nabla \chi_n \rangle|  d\mu\Big) =0.
\end{equation}
First assume that $B$ is not identically zero with respect to $\mu$. For $r>0$, let
$$
\psi_n(r):= \begin{cases}\ 1 \qquad \qquad \qquad \qquad &0\leq r \leq 1,\\
\ \displaystyle 1-\frac{1}{a_n}\int_{1}^{r} \frac{t}{v(t)}dt &1\leq r \leq n,\\
\ 0&n\leq r.
\end{cases}
$$
Then $\lim_{n \rightarrow \infty}\psi_n(r)=1$ $dr$-a.e. Let $\chi_n(x):= \psi_n(\rho(x))$. Then $\chi_n \in D(\E^{0,E})_{b}$. We have $\nabla \chi_n(x)=-\frac{1}{a_n}1_{\overline{E}_n \setminus \overline{E}_1}(x)\frac{\rho(x)}{v(\rho(x))}\nabla \rho(x)$. Hence for $n\geq 1$,
\begin{align*}
\E^0(\chi_n, \chi_n)+\int_{E} |\langle B, \nabla \chi_n \rangle | d\mu &= \int_{E_n\setminus E_1} \langle A(x) \nabla \chi_n(x), \nabla \chi_n(x)\rangle +|\langle B(x),\nabla \chi_n(x)\rangle | \mu(dx)\\
&=\frac{1}{a_n^2} \int_{E_n\setminus E_1}    \frac{\rho(x)^2}{v(\rho(x))^2}\langle A(x)\nabla \rho(x),\nabla \rho(x)\rangle\mu(dx)\\
&\qquad  + \frac{1}{{a_n}} \int_{E_n\setminus E_1} \frac{\rho(x)}{v(\rho(x))}|\langle B(x),\nabla \rho(x) \rangle| \mu(dx) \\
&=\frac{1}{a_n^2} \int_1^n \frac{r^2}{v(r)^2}\nu_1(dr) +\frac{1}{a_n}\int_1^n \frac{1}{v(r)}\nu_2(dr)\\
&\leq \frac{1}{a_n^2} \int_1^n \frac{r^2}{v(r)^2}\nu(dr) +\frac{1}{a_n}\int_1^n \frac{1}{v_2(r)}\nu_2(dr)\\
&\leq \frac{2}{a_n}+\frac{1}{a_n^2 v(1)}+ \frac{\log (v_2(n)\vee 1)}{a_n}.
\end{align*}
By our assumptions, the last term tends to $0$ as $n\to \infty$. Consequently $\tt$ is recurrent. If $B\equiv 0$ $\mu$-a.e., then $\log( v_2(n)\vee 1)\equiv 0$ and (\ref{ee1}) also holds.
\end{proof}
\begin{cor}\label{result} Let $v_1$, $v_2$, and $v$ be defined as in (\ref{v1}), (\ref{v2}), and (\ref{v1p2}). 
The conditions on $(a_n)_{n\ge 1}$ in Theorem \ref{ee} are satisfied, if one of the following conditions is fulfilled for sufficiently large $r$:
\begin{itemize}
\item[(a)] $v_1(r) \leq b r^2$ and $v_2(r) \leq b\log r$ for some constant $b>0$,
\item[(b)] $v(r) \leq c r^\alpha$ for some constants $c>0$ and $ \alpha <2$.
\end{itemize}
Consequently, if either (a) or (b) holds, then $\tt$ is not transient. In particular, if $\tt$ is additionally strictly irreducible, then  $\tt$ is recurrent.
\end{cor}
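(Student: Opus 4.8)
The plan is to verify directly that each of the two conditions imposed on $(a_n)_{n\ge1}$ in Theorem \ref{ee} --- namely $\lim_{n\to\infty}a_n=\infty$ and $\lim_{n\to\infty}\frac{\log(v_2(n)\vee1)}{a_n}=0$ --- follows from hypothesis (a) and from hypothesis (b), respectively. Throughout I use that $v_1,v_2\ge0$ and $v=v_1+v_2$, so that $0\le v_2\le v$, and that $v\ge v_1>0$ on $(0,\infty)$, which makes the integrand $r/v(r)$ locally bounded. Hence in $a_n=\int_1^n\frac{r}{v(r)}\,dr$ the contribution of any fixed bounded interval $[1,r_0]$ is a harmless nonnegative constant, and both the divergence and the growth rate of $a_n$ are governed by the tail $\int_{r_0}^n\frac{r}{v(r)}\,dr$, where (for $r_0$ large enough) the stated bounds on $v_1,v_2,v$ are assumed to hold.

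For (a) I would first note that $\log r\le r^2$ for large $r$, so $v(r)=v_1(r)+v_2(r)\le b\,r^2+b\log r\le 2b\,r^2$. This gives
$$
a_n\ \ge\ \frac{1}{2b}\int_{r_0}^n\frac{dr}{r}\ =\ \frac{1}{2b}\big(\log n-\log r_0\big),
$$
so $a_n\to\infty$ and $a_n\gtrsim\log n$. On the other hand $v_2(n)\le b\log n$ yields $\log(v_2(n)\vee1)=O(\log\log n)$, whence $\frac{\log(v_2(n)\vee1)}{a_n}=O\!\big(\frac{\log\log n}{\log n}\big)\to0$. Thus both conditions of Theorem \ref{ee} hold.

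For (b) I would use $v(r)\le c\,r^\alpha$ with $\alpha<2$, so that $1-\alpha>-1$ and
$$
a_n\ \ge\ \frac{1}{c}\int_{r_0}^n r^{1-\alpha}\,dr\ =\ \frac{1}{c(2-\alpha)}\big(n^{2-\alpha}-r_0^{2-\alpha}\big),
$$
which diverges and satisfies $a_n\gtrsim n^{2-\alpha}$. Since $v_2\le v\le c\,r^\alpha$, we get $\log(v_2(n)\vee1)=O(\log n)$, and therefore $\frac{\log(v_2(n)\vee1)}{a_n}=O\!\big(\frac{\log n}{n^{2-\alpha}}\big)\to0$ because $2-\alpha>0$. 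Again both conditions hold, so in either case Theorem \ref{ee} applies and yields that $\tt$ is not transient, and recurrent when additionally strictly irreducible.

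As this is purely an exercise in estimating the elementary integral $a_n$, I do not expect a genuine obstacle; the only points demanding a little care are the splitting off of the fixed interval $[1,r_0]$ (legitimate because $v\ge v_1>0$ there), the inequality $v_2\le v$ used in (b), and the treatment of the truncation $\,\vee1$, none of which affects the limiting behaviour.
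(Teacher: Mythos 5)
Your proposal is correct, and it is precisely the routine verification the paper intends: the paper states Corollary \ref{result} without proof, as an immediate consequence of Theorem \ref{ee}. Your elementary estimates (splitting off $[1,r_0]$ where $v\ge v_1>0$, bounding $v\le 2br^2$ in case (a) to get $a_n\gtrsim\log n$ against $\log(v_2(n)\vee 1)=O(\log\log n)$, and $a_n\gtrsim n^{2-\alpha}$ in case (b) against $\log(v_2(n)\vee 1)=O(\log n)$) are exactly the intended argument and contain no gaps.
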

\subsection{Examples and counterexamples}\label{3.2}
In this Subsection, we provide explicit examples and counterexamples. We start with several counterexamples which show that the existence of ($\chi_n)_{n \geq 1}\subset D(\E^0)$ such that  $0\leq \chi_n \leq 1$, $\lim_{n \rightarrow \infty}\chi_n =1$ $\mu$-a.e. and $\lim_{n \rightarrow \infty} \E(\chi_n,\chi_n)=0$ is not a sufficient condition for recurrence of $\tt$ in contrast to the symmetric case where this is always true (cf. \cite[Theorem 1.6.3]{FOT}). At the end of this Subsection, we discuss recurrence and transience related to Muckenhoupt weights.
\subsubsection{A counterexample using results from \cite{St1}}\label{exam1}
Consider the case where $E=\R$ and $(\E^0,D(\E^0))$ is given as the closure of
$$
\E^0(f,g):=\int_\R f'(x)g'(x)\mu(dx),\  f,g\in C_0^\infty(\R)
$$
on $L^2(\R,\mu)$ where $d\mu:=e^{-x^2}dx$. Then it is easy to see that $1\in D(L^0)$, $L^0 1=0$ and $C_0^\infty(\R)\subset D(L^0)$. In particular, condition (C) is satisfied. Moreover, $B(x):=-6e^{x^2}$ satisfies $(\ref{div1})$ and so by Lemma \ref{prop}, we can construct a closed operator $(\overline{L},D(\overline{L}))$ which is a closed extension of $Lu:=L^0 u+Bu'$, $u\in D(L^0)_{0,b}$ on $L^1(\R,\mu)$ satisfying (a) and (b) in Lemma \ref{prop}. By \cite[Remark 1.11 and Example 1.12]{St1}, $\ttb$ is not conservative, hence not recurrent by Corollary \ref{cons3}.\\
Since $\mu (\R)<\infty$, the restriction of $\ttb$ on $L^2(\R,\mu)$ coincides with the $L^2(\R,\mu)$-semigroup $\tt$. Thus $(L,D(L))$ is given as the part of ($\overline{L},D(\overline{L}))$ on $L^2(\R,\mu)$, i.e.
$$
D(L)=\{u\in L^2(\R,\mu)\cap D(\overline{L}):\  \overline{L}u\in L^2(\R,\mu)\}
$$
and
$$
Lf:=\overline{L} f \quad f\in D(L).
$$
Let $D:=D(\E^0)_{0,b}.$ Then for $f\in D(L)_b$, $g\in D$, we have by (\ref{luv}),\\
$$
\E(f,g)=(-Lf,g)=\E^0(f,g)+\int_\R Bg'f d\mu.
$$
and
$$
\E^0(f,f)\leq \E(f,f)
$$
where $g'$ denotes the derivative of $g$. Thus $\E$ satisfies (H1)-(H3). By construction of $(\overline{L},D(\overline{L}))$, we have
$$
D(L^0)_{0,b}\subset D(\overline{L})_{0,b}
$$
and if $u \in D(L^0)_{0,b}$, then $u\in L^2(\R,\mu)$ and
$$
\overline{L}u=L^0u+Bu'\in L^2(\R,\mu).
$$
Consequently, $C_0^\infty (\R)\subset D(L^0)_{0,b}\subset D(L)_{0,b}$. Choose $(\chi_n)_{n \geq 1} \subset C_0^\infty(\R)$ such that $0\leq \chi_n\leq 1$, $\lim_{n\rightarrow \infty} \chi_n=1$ $\mu$-a.e. and $\| \chi'_n\|_{L^\infty(\mu)} \leq 2/n$. It then follows from (\ref{div1}), that
$$
\lim_{n \rightarrow \infty}(-L\chi_n,\chi_n)=\lim_{n \rightarrow \infty}\E(\chi_n,\chi_n)=\lim_{n \rightarrow \infty}\E^0(\chi_n,\chi_n)=0.
$$
\centerline{}
\subsubsection{A generic counterexample}\label{exam2}
We call the following counterexample generic, since it works for a large class of $\varphi$. We let hence 
$E=\R$, $\varphi:\R \to \R^+$ be locally bounded above and below by strictly positive constants with $\varphi ' \in L^2_{loc}(\R,dx)$, $d\mu=\varphi dx$ and $B(x)=\frac{b}{\varphi(x)}$ for some constant $b\neq 0$. Note that these general assumptions on $\varphi$ imply that $C_0^{\infty}(\R)\subset D(L^0)_{0,b}\subset D(L)_{0,b}$ and that 
$$
Lf=\frac12 f''+\left (\frac{\varphi'}{2\varphi}+B\right)f',\ f\in C_0^{\infty}(\R).
$$
These two facts are important for our arguments below. In particular, condition (C) is satisfied.\\
Using similar arguments as in Subsection \ref{exam1},  we can construct a generalized Dirichlet form $\E$ satisfying (H1)-(H3) and such that $\E$ is given as an extension of
$$
\E(f,g):=\frac{1}{2}\int_\R f'(x)g'(x)\mu(dx)-\int_\R B(x)f'(x)g(x)\mu(dx),\  f,g\in C_0^\infty(\R)
$$
on $L^2(\R,\mu)$. By the specialties of dimension one, $\E$ can be symmetrized, i.e. there exists a symmetric Dirichlet form $(\widetilde{\E},D(\widetilde{\E}))$ whose infinitesimal generator $(\widetilde{L},D(\widetilde{L}))$ coincides with ($L,D(L))$ locally. This will be realized in (\ref{gloc}) below.\\
For $n\geq 1$, let $V_n:=(-n,n)$ be the open interval from $-n$ to $n$ in $\R$ and $(\E^{0,V_n},D(\E^{0,V_n}))$ be the symmetric Dirichlet form given as the closure of
$$
\E^{0,V_n}(f,g)=\frac{1}{2}\int_{V_n} f'g'd\mu,\ f,g\in C_0^\infty(V_n).
$$
Let ($L^{0,V_n},D(L^{0,V_n}))$ be the closed linear operator on $L^2(V_n,\mu)$ corresponding to $(\E^{0,V_n},D(\E^{0,V_n}))$. Since $B$ satisfies (\ref{div1}), by \cite[Proposition 1.1]{St1}, we can construct a closed operator $(\overline{L}^{V_n},D(\overline{L}^{V_n}))$ which is the closure of $L^{V_n}u=L^{0,V_n}u+Bu'$, $u \in D(L^{0,V_n})_b$ on $L^1(V_n,\mu)$. Let $(L^{V_n},D(L^{V_n}))$ be the part of $(\overline{L}^{V_n},D(\overline{L}^{V_n}))$ on $L^2(V_n,\mu)$, then we have $D(L^{0,V_n})_b\subset D(L^{V_n})$,
\begin{equation}\label{lvn}
L^{V_n}f=\overline{L}^{V_n}f=L^{0,V_n}f+Bf',\ f\in D(L^{0,V_n})_b
\end{equation}
and
$$
\E^{0,V_n}(f,g)-\int_{V_n}Bf'gd\mu=-\int_{V_n} L^{V_n}fgd\mu,\  f\in D(L^{V_n})_b,\ g\in D(\E^{0,V_n}).
$$
Let ($G_\alpha^{V_n})_{\alpha>0}$ be the $C_0$-resolvent of contractions corresponding to $(L^{V_n},D(L^{V_n}))$. Since the $L^2(\mu)$- and $L^2(dx)$-norms are equivalent on $V_n$, $D(\E^{0,V_n})=H_0^1(V_n):=$ the closure of $C_0^\infty(V_n)$ with respect to the norm $\sqrt{\int_{V_n}(u^2+(u')^2)dx}$ in $L^2(V_n,dx)$. Thus, $u\in D(\E^{0,V_n})$, if and only if $u$ is equal a.e. to an absolutely continuous function which has a.e. an ordinary derivative belonging to $L^2(V_n,dx)$ and $u$ does not have a boundary value, i.e. for any $v\in C_0^\infty(\overline{V}_n)$,
$$
\int_{V_n} u'vdx =-\int_{V_n} uv'dx.
$$
Let
\begin{equation}\label{phitilde}
\widetilde{\varphi}(x):= \exp \Big (\int_0^x \frac{\varphi'(s)+2b}{\varphi(s)}ds \Big )
\end{equation}
and  $(\widetilde{\E}^{V_n},D({\E}^{0,V_n}))$ be the bilinear form on $L^2(V_n,\widetilde{\varphi}dx)$ defined by
$$
\widetilde{\E}^{V_n}(f,g):=\frac{1}{2}\int_{V_n} f'g'\widetilde{\varphi}dx,\ f,g\in D({\E}^{0,V_n}).
$$
Since the $L^2(\mu)$- and $L^2(\widetilde\varphi dx)$-norms are equivalent on $V_n$, $(\widetilde{\E}^{V_n},D({\E}^{0,V_n}))$ is a symmetric Dirichlet form on $L^2(V_n,\widetilde{\varphi}dx)$.
Let ($\widetilde{L}^{V_n},D(\widetilde{L}^{V_n}))$ be the closed linear operator on $L^2(V_n,\widetilde{\varphi} dx)$ corresponding to $(\widetilde{\E}^{V_n},D({\E}^{0,V_n}))$.
\begin{lem}\label{cexam}
$D(\widetilde{L}^{V_n})=D(L^{0,V_n})$ and for $u\in D(\widetilde{L}^{V_n})$,
$$
\widetilde{L}^{V_n}u=L^{0,V_n}u+Bu'.
$$
\end{lem}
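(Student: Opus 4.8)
The plan is to characterize the generators of the two symmetric Dirichlet forms explicitly through the regularity of their respective fluxes, and then to transfer between them using a single logarithmic-derivative identity for $\widetilde{\varphi}/\varphi$. As a preliminary, I would record that on the relatively compact interval $V_n$ both $\varphi$ and $\widetilde{\varphi}$ are bounded above and below by strictly positive constants: for $\varphi$ this is assumed, and for $\widetilde{\varphi}$ it follows because $\varphi'\in L^2_{loc}(\R,dx)$ together with $\varphi$ bounded below forces the exponent $\int_0^x (\varphi'+2b)/\varphi\,ds$ in (\ref{phitilde}) to be an absolutely continuous, hence bounded, function on $\overline{V}_n$. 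Consequently $L^2(V_n,\mu)$, $L^2(V_n,\widetilde{\varphi}\,dx)$ and $L^2(V_n,dx)$ all coincide as sets with mutually equivalent norms, and $D(\E^{0,V_n})=H_0^1(V_n)$ is the common form domain for both forms.

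Next, using the characterization of the generator via continuity of $v\mapsto \E(u,v)$ in the $L^2$-norm (as in \cite[I. Proposition 2.16]{MR}), I would show that $u\in D(L^{0,V_n})$ if and only if $u\in H_0^1(V_n)$ and the flux $u'\varphi$ lies in $H^1(V_n)$, in which case $L^{0,V_n}u=\frac{1}{2\varphi}(u'\varphi)'$. This comes from the distributional identity $\frac12\int_{V_n}u'v'\varphi\,dx=-\frac12\int_{V_n}(u'\varphi)'v\,dx$ for $v\in C_0^\infty(V_n)$, where no boundary terms arise because the test functions are compactly supported while the homogeneous boundary behaviour of $u$ is encoded in $H_0^1(V_n)$. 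An identical argument gives $u\in D(\widetilde{L}^{V_n})$ if and only if $u\in H_0^1(V_n)$ and $u'\widetilde{\varphi}\in H^1(V_n)$, with $\widetilde{L}^{V_n}u=\frac{1}{2\widetilde{\varphi}}(u'\widetilde{\varphi})'$. This reduces the lemma to comparing the two flux-regularity conditions.

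The key is the identity $\frac{d}{dx}\log\frac{\widetilde{\varphi}}{\varphi}=\frac{\widetilde{\varphi}'}{\widetilde{\varphi}}-\frac{\varphi'}{\varphi}=\frac{2b}{\varphi}$, which is immediate from (\ref{phitilde}) and shows that $\widetilde{\varphi}/\varphi$ is Lipschitz on $V_n$ with bounded derivative $2b\widetilde{\varphi}/\varphi^2$, and likewise for its reciprocal $\varphi/\widetilde{\varphi}$. For $u\in D(L^{0,V_n})$ I would then write $u'\widetilde{\varphi}=(u'\varphi)\cdot(\widetilde{\varphi}/\varphi)$ as a product of the $H^1$-function $u'\varphi$ and the Lipschitz function $\widetilde{\varphi}/\varphi$; the product rule for $H^1\cdot W^{1,\infty}$ gives $u'\widetilde{\varphi}\in H^1(V_n)$, hence $u\in D(\widetilde{L}^{V_n})$, and computing
\[
\widetilde{L}^{V_n}u=\frac{1}{2\widetilde{\varphi}}\Big((u'\varphi)'\tfrac{\widetilde{\varphi}}{\varphi}+(u'\varphi)\tfrac{2b\widetilde{\varphi}}{\varphi^2}\Big)=\frac{(u'\varphi)'}{2\varphi}+\frac{bu'}{\varphi}=L^{0,V_n}u+Bu'.
\]
The reverse inclusion is symmetric, factoring $u'\varphi=(u'\widetilde{\varphi})\cdot(\varphi/\widetilde{\varphi})$ and using that $\varphi/\widetilde{\varphi}$ is Lipschitz as well.

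The main obstacle is the flux characterization of the two generators in the second paragraph: one must justify the distributional integration by parts carefully and verify that the resulting condition $u'\varphi\in H^1(V_n)$ (respectively $u'\widetilde{\varphi}\in H^1(V_n)$) is exactly equivalent to membership in the operator domain, rather than merely sufficient. Once this is established, the equality of domains and of operators reduces to the one-line logarithmic-derivative identity together with the elementary $H^1\cdot W^{1,\infty}$ product rule, so the remainder of the proof is routine.
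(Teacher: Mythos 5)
Your proposal is correct, but it proves the lemma by a genuinely different mechanism than the paper. The paper never describes the generator domains explicitly: it works entirely on the form level, setting $\psi:=\exp\bigl(\int_0^x \tfrac{2b}{\varphi(s)}ds\bigr)$ (so that $\widetilde{\varphi}=\psi\varphi$ and $\psi'=\tfrac{2b}{\varphi}\psi$) and exploiting that $v\mapsto v\psi$ and $v\mapsto v/\psi$ preserve $D(\E^{0,V_n})$. Testing $\E^{0,V_n}(u,v\psi)$ and expanding by the product rule yields $\widetilde{\E}^{V_n}(u,v)=-\int_{V_n}(L^{0,V_n}u+Bu')\,v\,\widetilde{\varphi}\,dx$, so that $L^2$-continuity of $v\mapsto\widetilde{\E}^{V_n}(u,v)$ is read off directly via \cite[I. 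Proposition 2.16]{MR}; the converse direction tests against $v/\psi$. You instead characterize both domains concretely in Sturm--Liouville form ($u\in H^1_0(V_n)$ with flux $u'\varphi$, resp. $u'\widetilde{\varphi}$, in $H^1(V_n)$, the generator being $\tfrac{1}{2\varphi}(u'\varphi)'$, resp. $\tfrac{1}{2\widetilde{\varphi}}(u'\widetilde{\varphi})'$) and then transfer between the two flux conditions by multiplying with the Lipschitz function $\widetilde{\varphi}/\varphi$ (or its reciprocal), whose logarithmic derivative is $\tfrac{2b}{\varphi}$. In effect the paper puts the multiplier $\psi$ on the test function, while you put it on the flux; both hinge on the same identity $\psi'=\tfrac{2b}{\varphi}\psi$ and on $\psi,1/\psi$ being bounded with bounded derivative on $V_n$. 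Your route buys an explicit, quintessentially one-dimensional description of the common domain $D(L^{0,V_n})=D(\widetilde{L}^{V_n})$, which is more informative but requires you to verify the flux characterization is an equivalence (the density/integration-by-parts step you rightly flag); the paper's route is shorter given the abstract generator criterion and is more robust, since it would survive in settings (e.g.\ higher dimensions) where no flux characterization of the domain is available, needing only stability of the form domain under multiplication by $\psi$.
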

\begin{proof}
Suppose that $u\in D(L^{0,V_n})$. We first show that $u \in D(\widetilde{L}^{V_n})$, i.e. $v \longmapsto \widetilde\E^{V_n}(u,v)$ is continuous with respect to 
$\sqrt{(v,v)_{L^2(V_n, \widetilde\varphi dx)}}$ 
on $D(\E^{0,V_n})$. Let
$$
\psi(x):=\exp\Big ( \int_0^x \frac{2b}{\varphi(s)}ds\Big ).
$$
It is easy to see that $\widetilde{\varphi}=\psi\varphi$ and that $v\psi\in D(\E^{0,V_n})$ for any $v\in D(\E^{0,V_n})$. Then
$$
\E^{0,V_n}(u,v\psi)=\frac{1}{2}\int_{V_n} u'(v\psi)'\varphi dx =\frac{1}{2}\int_{V_n} u'v'\psi\varphi dx+\frac{1}{2}\int_{V_n} u'\psi' v \varphi dx.
$$
Consequently, we obtain
$$
\widetilde{\E}^{V_n}(u,v)=-\int_{V_n} L^{0,V_n}u\cdot v\psi \varphi dx-\int_{V_n} \frac{b}{\varphi}u'v\psi \varphi dx,
$$
hence $u\in D(\widetilde{L}^{V_n})$ and
$$
\widetilde{L}^{V_n}u=L^{0,V_n}u+Bu'.
$$
Conversely, suppose that $u \in D(\widetilde{L}^{V_n})$ and that $v\in D(\E^{0,V_n})$. Then $\frac{v}{\psi} \in D(\E^{0,V_n})$ and
$$
\widetilde{\E}^{V_n}(u,\frac{v}{\psi})=\frac{1}{2}\int_{V_n}u'\Big (\frac{v}{\psi}\Big )'\widetilde\varphi dx=\frac{1}{2}\int_{V_n}u'v'\varphi dx-\frac{1}{2}\int_{V_n}u'v\frac{\psi '}{\psi^2}\widetilde\varphi dx.
$$
Consequently, we obtain
$$
\E^{0,V_n}(u,v)=-\int_{V_n}\widetilde{L}^{V_n}u\cdot \frac{v}{\psi}\widetilde{\varphi}dx+\int_{V_n}\frac{b}{\varphi}u'\frac{v}{\psi}\widetilde{\varphi}dx
$$
and so $u\in D(L^{0,V_n})$.
\end{proof}\\
\centerline{}
Let ($\widetilde{G}^{V_n}_\alpha)_{\alpha>0}$ be the $C_0$-resolvent of contractions corresponding to ($\widetilde{L}^{V_n},D(\widetilde{L}^{V_n}))$. By Lemma \ref{cexam} and (\ref{lvn}), we obtain $D(\widetilde{L}^{V_n})=D(L^{0,V_n})$ and for any $u \in D(L^{0,V_n})_b$,
$$
L^{V_n}u=\widetilde{L}^{V_n} u.
$$
Since ($\widetilde{L}^{V_n},D(\widetilde{L}^{V_n}))$ is a Dirichlet operator on $L^2(V_n,\widetilde\varphi dx)$, we get $D(\widetilde{L}^{V_n})_b\subset D(\widetilde{L}^{V_n})$ densely and so by Lemma \ref{cexam},
$$
(1-L^{V_n})D(L^{0,V_n})_b =(1-\widetilde{L}^{V_n})D(\widetilde{L}^{V_n})_b\subset L^2(V_n,\widetilde\varphi dx)=L^2(V_n,\mu)
$$
densely. Consequently, we obtain $D(\widetilde{L}^{V_n})=D(L^{V_n})$ and for $u\in D(L^{V_n})$,
$$
L^{V_n}u=\widetilde{L}^{V_n}u.
$$
It follows that for $f \in L^2(V_n,\mu)$,
\begin{equation}\label{tga}
G_\alpha^{V_n}f=(\alpha-L^{V_n})^{-1}f=(\alpha-\widetilde{L}^{V_n})^{-1}f=\widetilde{G}_\alpha^{V_n}f,\ \mu\text{-a.e.}
\end{equation}
The $C_0$-resolvent of contractions $(\overline{G}_\alpha)_{\alpha>0}$ of $(\overline{L},D(\overline{L}))$ is defined by
$$
\overline{G}_\alpha f=\lim_{n \to \infty}\overline{G}_\alpha ^{V_n}(f\cdot 1_{V_n}),\ \mu\text{-a.e.}\ f\in L^1(\R,\mu)
$$
(cf. proof of Lemma \ref{prop}). For the $C_0$-resolvent of contractions $\ga$ of $(L,D(L))$, it holds (see right after Lemma \ref{prop}) 
$$
G_\alpha f=\overline{G}_\alpha f= \lim_{n \to \infty}\overline{G}^{V_n}_\alpha (f\cdot 1_{V_n})=\lim_{n\to \infty} G_\alpha^{V_n}(f\cdot 1_{V_n}),\ \mu\text{-a.e.}\ f\in L^1(\R,\mu)\cap L^2(\R,\mu),
$$
hence $G_\alpha f=\lim_{n \to \infty}G_\alpha^{V_n}(f \cdot 1_{V_n})$, $f\in L^2(\R,\mu)$.\\
Next, we will construct a symmetric Dirichlet form on $L^2(\R,\widetilde\varphi dx)$ which extends $(\widetilde\E^{V_n},D(\E^{0,V_n}))$ for any $n\geq 1$. We have already constructed a sub-Markovian $C_0$-resolvent of contractions $(\widetilde{G}_\alpha^{V_n})_{\alpha>0}$ on $L^2(V_n,\widetilde\varphi dx)$. For $f\in L^2(\R,\widetilde\varphi dx)$, $\alpha>0$
$$
\widetilde{G}_\alpha f:=\lim_{n \to \infty} \widetilde{G}_\alpha^{V_n}(f\cdot 1_{V_n})
$$
exists $\widetilde\varphi dx$-a.e. and $(\widetilde{G}_\alpha)_{\alpha>0}$ is a sub-Markovian $C_0$-resolvent of contractions on $L^2(\R,\widetilde\varphi dx)$ (cf. proof of Lemma \ref{prop}). Since for each $n\geq 1$, ($\widetilde{G}_\alpha^{V_n})_{\alpha >0}$ is symmetric, so is $(\widetilde{G}_\alpha)_{\alpha>0}$. Let $(\widetilde\E,D(\widetilde\E))$ be the symmetric Dirichlet form corresponding to $(\widetilde{G}_\alpha)_{\alpha>0}$. Then $(\widetilde\E,D(\widetilde\E))$ is a closed extension of 
$$
\frac{1}{2}\int_\R f'g'\widetilde \varphi dx,\ f,g \in C_0^\infty(\R).
$$
For $f\in L^2(\R,\mu)\cap L^2(\R,\widetilde{\varphi}dx)$, using the above and (\ref{tga}) it holds
\begin{equation}\label{gloc}
G_\alpha f=\lim_{n \to \infty} G_\alpha^{V_n} (f\cdot 1_{V_n})=\lim_{n \to \infty}\widetilde{G}^{V_n}_\alpha (f\cdot 1_{V_n})=\widetilde{G}_\alpha f, \ \mu\text{-a.e.}
\end{equation}
Therefore, the potential operators of $(G_\alpha)_{\alpha>0}$ and ($\widetilde{G}_\alpha)_{\alpha>0}$ are the same on $L^1(\R,\mu)\cap L^1(\R,\widetilde{\varphi}dx)$ and the recurrence or transience of $(G_\alpha)_{\alpha>0}$ and ($\widetilde{G}_\alpha)_{\alpha>0}$ are equivalent.
\begin{rem}\label{finalrem}
If we choose $\varphi$ and $b$ so that either it holds
\begin{equation}\label{criterion}
\int_0^\infty \frac{1}{\widetilde{\varphi}(x)}dx<\infty \ \ \text{or} \ \ \int_{-\infty}^0 \frac{1}{\widetilde{\varphi}(x)}dx<\infty,
\end{equation}
where $\widetilde{\varphi}$ is as in (\ref{phitilde}), then it follows similarly to \cite[Theorem 3.11]{ORT} that $\widetilde{\E}$ is not recurrent. Consequently, $\E$ is also not recurrent. However, as in Subsection \ref{exam1}, there exists a sequence of functions $(\chi_n)_{n \geq 1} \subset C_0^\infty(\R)$, such that $0\leq \chi_n\leq 1$, $\lim_{n\rightarrow \infty} \chi_n=1$ $\mu$-a.e. and
$$
\lim_{n \rightarrow \infty}\E(\chi_n,\chi_n)=0.
$$
\end{rem}
For instance, if $\varphi(x)=e^{-|x|}$, $b=\frac12$, then $\widetilde{\varphi}(x)=e^{-x} \exp ((e^x-1))$ for $x\geq 0$. Consequently
$$
\int_0^\infty \frac{1}{\widetilde{\varphi}(x)}dx=\int_0^\infty \frac{e^x}{\exp(e^x-1)}dx=1,
$$
and so the criterion (\ref{criterion}) of Remark \ref{finalrem} is satisfied. Moreover, it is easy to see that for this choice of $\varphi$ and $b$, $\E$ has the following additional properties: $\E$ is not conservative and $\E$ does not satisfy the weak sector condition, i.e. it holds
\[
\sup_{u,v\in C_0^\infty(\R)\setminus \{0\}}\frac{|(-Lu,v)|}{\|u\|_{D(\E^{0})}\|v\|_{D(\E^{0})} }=\infty.
\]
Replacing $\varphi(x)=e^{-|x|}$ by $\varphi(x)=\min\{1,\frac{1}{|x|}\}$ the criterion (\ref{criterion}) of Remark \ref{finalrem} is still satisfied, but $\E$ becomes conservative and does not satisfy the strong 
sector condition, i.e. it holds
\[
\sup_{u,v\in C_0^\infty(\R)\setminus \{0\}}\frac{|(-Lu,v)|}{\sqrt{\E^{0}(u,u)}\sqrt{\E^{0}(v,v)}}=\infty.
\]
However, in this case, it is not easy to see whether $\E$ satisfies the weak sector condition or not.

\begin{exam}\label{exam22}
Choosing $\varphi(x)\equiv 1$ and $B(x)\equiv b$ for some constant $b\neq 0$, gives another example where 
the criterion (\ref{criterion}) of Remark \ref{finalrem} is satisfied. Hence $\E$ is not recurrent, but there exists a sequence of functions $(\chi_n)_{n \geq 1} \subset C_0^\infty(\R)$ such that $0\leq \chi_n\leq 1$, $\lim_{n\rightarrow \infty} \chi_n=1$ $\mu$-a.e. and
\begin{equation}\label{en0}
\lim_{n \rightarrow \infty}\E(\chi_n,\chi_n)=0.
\end{equation}
However, by \cite[Proposition 1.10(c)]{St1}, $dx$ is $(\overline{T}_t)$-invariant. This example shows that even though (\ref{en0}) holds and the reference measure $dx$ is $(\overline{T}_t)$-invariant, $\tt$ does not need to be recurrent. Obviously, in this example $\E$ satisfies the weak sector condition, but not the strong sector condition, i.e. $\E$ is not sectorial in the sense of this paper.
\end{exam}
\subsubsection{Muckenhoupt weights}\label{exam3}
In this Subsection, we present a class of examples of $\varphi$ and $B$ applying Corollary \ref{result} and Corollary \ref{cor}(a). We consider the case where $E=\R^d$ with $d\geq 2$ and $(\E^0,D(\E^0))$ is given as the closure of
$$
\E^0(f,g):=\int_{\R^d} \langle \nabla f, \nabla g\rangle d\mu,\  f,g\in C_0^\infty(\R^d)
$$
on $L^2(\R^d,\mu)$, where $d\mu:=\varphi dx$ and $\varphi$ is an $\cal{A}$$_\beta$-weight, $\beta\in[1,2]$ (cf. \cite[Definition 1.2.2]{Tu}). Note that for $\varphi\in\cal{A}_\beta$ (short for $\varphi$ is an $\cal{A}_\beta$-weight), $\beta\in[1,2]$, the closability follows since $\cal{A}_\beta\subset {\cal A}$$_{2}$ and $\frac{1}{\varphi}\in L^1_{loc}(\R^d,dx)$ for any $\varphi\in\cal{A}$$_2$ (cf. \cite[Remark 1.2.4]{Tu}). Assume that $B\in L^2_{loc}(\R^d,\R^d,\mu)$ satisfies $(\ref{div1})$ and that $D(L^0)_{0,b}$ is a dense subset of $L^1(\R^d,\mu)$, i.e. condition (C) is satisfied. 
\begin{rem}\label{condC} For instance, if $\varphi=\xi^2$, $\xi\in H_{loc}^{1,2}(\R^d,dx)$, $\varphi>0$ $dx$-a.e. where $H^{1,2}(\R^d,dx)$ denotes the usual Sobolev space of order one in $L^2(\R^d,dx)$ and $H_{loc}^{1,2}(\R^d,dx):=\{f: f\cdot \chi\in H^{1,2}(\R^d,dx)\text{ for any } \chi \in C_0^\infty(\R^d) \}$, then $C_0^\infty(\R^d)\subset D(L^0)$ and (C) holds. Another example is given in \ref{exam3}(c) below, where the drift coefficient may even not be in $L^1_{loc}(\R^d,\mu)$, i.e. in the non-semimartingale case.
\end{rem}
Under the present assumptions, we can construct as before a generalized Dirichlet form $\E$ satisfying (H1)-(H3) and which is an extension of
$$
\int_{\R^d} \langle \nabla f, \nabla g \rangle\, d\mu
-\int_{\R^d} \langle B, \nabla f \rangle g(x)\, d\mu ,\  f,g\in \{ f\in D(L^0)_{0,b}\,:\,\langle B,\nabla f\rangle \in L^2(E,\mu)\}.
$$
We consider the following condition on $B$:\\
\centerline{}
There exist constants $M>0$ and $\alpha\in \R$ such that
$$
|\langle B(x),x \rangle|\leq M(1+|x|)^\alpha
$$
for $\mu$-a.e. sufficiently large $|x|$. \\
\centerline{}
(a) Let $\varphi$ be a Muckenhoupt $\cal{A}$$_1$-weight and $d=2$. By  \cite[Proposition 1.2.7]{Tu}, for $r>1$ and some constant $A$
$$
v_1(r)\leq A r^2.
$$
Since $\varphi\in \cal{A}$$_1$, there exists $p>1$ such that $\varphi^p\in \cal{A}$$_1$ (cf. \cite[IX. Theorem 3.5 (Reverse H\"older)]{Ta}). We may assume that $\alpha\cdot \frac{p}{p-1}+2\neq 0$ (otherwise choose a slightly bigger $p$). Note that for sufficiently large $r>1$,
\begin{align*}
v_2(r)=\int_{B_r}|\langle B(x),x \rangle |\varphi(x)dx &\leq \Big (\int_{B_r} \varphi^p dx\Big )^\frac{1}{p} \Big (\int_{B_r} |\langle B(x),x \rangle | ^\frac{p}{p-1} dx\Big )^\frac{p-1}{p}\\
&\displaystyle\leq C (1+r)^{\alpha +\frac{2}{p}+\frac{2p-2}{p}}=C(1+r)^{\alpha +2},
\end{align*}
where $C$ is some positive constant. By Corollary \ref{result}, if $\alpha\leq -2$, then $\E$ is not transient.\\
\centerline{}
(b) Let $\varphi$ be a Muckenhoupt $\cal{A}$$_\beta$-weight with $1\leq \beta \leq 2$. Then the assumptions (A), (B) and (C) in \cite{Stu2} are satisfied on $\R^d$ for $(\E^0,D(\E^0))$ (cf. \cite[Lemma 5.2]{RST}). Furthermore, by \cite[Proposition 2.3]{Stu2} and \cite[Section 4]{Stu3}, there exists a measurable function $(p^0_t(x,y))_{t>0,x,y\in \R^d}$ and some constant $C>0$ depending on $\beta$, $d$ and $A$ such that
$$
T^0_t f(x)=\int_{\R^d} p^0_t(x,y) f(y) \mu (dy)\ \mu\text{-a.e.}\ x\in \R^d,
$$
where $f\in L^2(\R^d,\mu)$ and ($T^0_t)_{t>0}$ denotes the $C_0$-semigroup of contractions on $L^2(\R^d,\mu)$ corresponding to $(\E^0,D(\E^0))$ and for any $t>0$  $x,y\in \R^d$,
$$
\ \frac{1}{C\cdot \mu (B_{\sqrt{t}} (y)) } \exp\Big (- \frac{C |x-y|^2}{t} \Big)\leq p^0_t(x,y)\leq \frac{C(1+\frac{|x-y|^2}{t})^{\frac{\beta d+\log_2 A}{2}}}{\sqrt{\mu (B_{\sqrt{t}} (x))\mu (B_{\sqrt{t}} (y))}}\exp \Big(-\frac{|x-y|^2}{4t}\Big ).
$$
By Remark \ref{rec2}(c), $(\E^0,D(\E^0))$ is irreducible. Consequently, by \cite[Corollary 4.12]{Stu3}, 
Remark \ref{rec2}(b) and (d)
\begin{equation}\label{transiencesym}
\int_1^\infty \frac{r}{v_1(r)}dr<\infty,
\end{equation}
if and only if $(\E^0,D(\E^0))$ is transient. Hence, by Corollary \ref{cor}(a),  (\ref{transiencesym}) is a sufficient 
criterion for transience of $\E$.\\
\centerline{}
(c) Let $\varphi(x):=|x|^\eta$ with $-d<\eta$. Note that then $C_0^{\infty}(\R^d\setminus\{0\})\subset D(L^0)_{0,b}$ for any $\eta>-d$, hence (C) holds. Then
$$
v_1(r)=\int_{B_r} |x|^\eta dx=C_1 r^{d+\eta},
$$
where $C_1$ depends on $d$ and for sufficiently large $r>1$,
$$
v_2(r)=\int_{B_r}|\langle B(x),x \rangle|\cdot |x|^\eta dx \leq \begin{cases}\ C_2(1+r)^{\alpha+d+\eta} \qquad &\  \alpha+d+\eta\neq 0,\\
\ C_2 \log{(1+r)} \qquad &\ \alpha+d+\eta= 0,
\end{cases}
$$
where $C_2$ depends on $d$ and $M$. For $-d<\eta<d$, it is well-known that $\varphi \in \cal{A}$$_2$. By (b) (cf. (\ref{transiencesym})), if $-d+2<\eta<d$, then $\E$ is transient. Moreover by Corollary \ref{result}, if one of the following conditions is satisfied, then $\E$ is not transient.
\begin{itemize}
	\item[(c1)] $d+\eta=2$ and $\alpha\leq -2$.
	\item[(c2)] $d+\eta\in(0,2)$ and $\alpha+d +\eta<2$.
\end{itemize}
Similarly to \cite[Section 3]{St1} one can show that there exists a diffusion process associated with $\E$ and similarly to \cite[Theorem 4.5]{T2} one can then derive a semimartingale characterization of this process. In particular, if $d+\eta\in(0,1]$, then the associated process will not be semimartingale. Thus (c2) asserts that we are able to determine non-transience or recurrence of this process even in the non-semimartingale case.
\section{Proofs of Lemmas \ref{prop}, \ref{con1} and \ref{con2}}\label{4}
\begin{proof}(of Lemma \ref{prop}) Let $V\subset \subset E$. Then $(\E^0,D(\E^{0,V}))$ is a regular sectorial Dirichlet form on $L^2(V,\mu)$. Denote by $(L^{0,V},D(L^{0,V}))$ its generator on $L^2(V,\mu)$. The following results can be derived similarly to in \cite[Proposition 1.1, Theorem 1.5 and Lemma 1.6]{St1}. We obtain:
\begin{itemize}
	\item[(a)] $L^V u:= L^{0,V}u+ \langle B, \nabla u \rangle$, $u\in D(L^{0,V})_b$ is 		closable on $L^1(V,\mu)$. The closure $(\overline{L}^V,D(\overline{L}^V))$ generates a sub-Markovian $C_0$-resolvent of contractions $(\overline{G}^V_\alpha )_{\alpha>0}$.\	\
	\item[(b)] $D(\overline{L}^V)_b\subset D(\E^{0,V})$ and for $u\in D(\overline{L}^V)_b$, $v\in D(\E^{0,V})_b$, we have\\
$$
\E^0(u,v)-\int_V \langle B,\nabla u \rangle vd\mu=-\int_V \overline{L}^Vu  v d\mu
$$
and
$$
\E^0(u,u)=-\int_V \overline{L}^Vu  u d\mu.
$$
\end{itemize}
Define for $f\in L^1(E,\mu)$, $\alpha>0$
$$
\overline{G}^V_\alpha f:= \overline{G}^V_\alpha (f \cdot 1_V).
$$
Then $(\overline{G}^V_\alpha)_{\alpha>0}$ can be extended to a sub-Markovian $C_0$-resolvent of contractions on $L^1(E,\mu)$. Indeed, if $V_1$ and $V_2$ are relatively compact open sets in $E$ and $V_1 \subset V_2$, then for $f\in  L^1(E,\mu) $ with $f\geq 0$ $\mu$-a.e. it holds (cf. \cite[Lemma 1.6]{St1})\\
\begin{equation}\label{gv1}
\overline{G}_\alpha ^{V_1} f \leq \overline{G}_\alpha^{V_2} f,\ \mu\text{-a.e.}
\end{equation}
Let $(V_n)_{n \geq 1}$ be relatively compact open sets in $E$ such that $\overline{V}_n \subset V_{n+1}$ for all $n\geq 1$ and $\cup_{n\geq 1} V_n=E$. If $f\in L^1(E,\mu)$ with $f \geq 0$ $\mu$-a.e., then
$$
\overline{G}_\alpha f:=\lim_{n\rightarrow \infty} \overline{G}_\alpha ^{V_n} f
$$
exists $\mu$-a.e. and is independent of the choice of relatively open sets $(V_n)_{n \geq 1}$ by (\ref{gv1}). For general $f\in L^1(E,\mu)$, let $\overline{G}_\alpha f:= \overline{G}_\alpha f^+ - \overline{G}_\alpha f^-$. Then $(\overline{G}_\alpha)_{\alpha>0}$ is a sub-Markovian $C_0$-resolvent of contractions on $L^1(E,\mu)$ (cf. \cite[Theorem 1.5(a)]{St1}). If $(\overline{L},D(\overline{L}))$ is the generator of  $(\overline{G}_\alpha)_{\alpha>0}$, then it satisfies conditions of Lemma \ref{prop} (see, \cite[Theorem 1.5]{St1}).
\end{proof}
\centerline{}
Note that $(\overline{L},D(\overline{L}))$ is a closed extension of $(L,D(L^0)_{0,b})$ on $L^1(E,\mu)$, but not necessarily the closure.\\
\centerline{}
\begin{proof}(of Lemma \ref{con1}) Let $V\subset \subset E$. Since $\E_1^0$- and $\E_1^{0,h}$-norms are equivalent on $D(\E^{0,V})$, $(\E^{0,h},D(\E^{0,V}))$ is also a regular sectorial Dirichlet form on $L^2(V,\mu)$. Denote by $(L^{0,h,V},D(L^{0,h,V}))$ its generator on $L^2(V,\mu)$. Then we obtain $D(L^{0,V})=D(L^{0,h,V})$ and $L^{0,h,V}u=L^{0,V}u-h\cdot u$ for $u\in D(L^{0,V})=D(L^{0,h,V})$. Furthermore, similarly as in Lemma \ref{prop}, we obtain:
\begin{itemize}
	\item[(c)] $L^{h,V} u:= L^{0,h,V}u+ \langle B, \nabla u \rangle$, $u\in D(L^{0,h,V})_b$ is closable on $L^1(V,\mu)$. The closure $(\overline{L}^{h,V},D(\overline{L}^{h,V}))$ generates a sub-Markovian $C_0$-resolvent of contractions $(\overline{G}^{h,V}_\alpha )_{\alpha>0}$.
	\item[(d)] $D(\overline{L}^{h,V})_b\subset D(\E^{0,V})$ and for $u\in D(\overline{L}^{h,V})_b$, $v\in D(\E^{0,V})_b$, we have\\
$$
\E^{0,h}(u,v)-\int_V \langle B,\nabla u \rangle vd\mu=-\int_V \overline{L}^{h,V} u  v d\mu
$$
and\\
$$
\E^{0,h}(u,u)=-\int_V \overline{L}^{h,V}u  u d\mu.
$$
	\item[(e)] $D(\overline{L}^{h,V})=D(\overline{L}^V)$ and for $u\in D(\overline{L}^{h,V})$,
$$
\overline{L}^{h,V} u= \overline{L}^Vu-h\cdot u.
$$
\end{itemize}
Since the graph norms of $L^{h,V}$ and $L^V$ are equivalent on $D(L^{0,V})$, we obtain the last statement (e).\\
Define for $f\in L^1(E,\mu)$, $\alpha>0$
$$
\overline{G}^{h,V}_\alpha f:= \overline{G}^{h,V}_\alpha (f \cdot 1_V).
$$
Then similarly to the above $(\overline{G}^{h,V}_\alpha)_{\alpha>0}$ can be extended to a sub-Markovian $C_0$-resolvent of contractions on $L^1(E,\mu)$. As in the $(\overline{G}_\alpha)_{\alpha>0}$ case, choose relatively compact open sets $(V_n)_{n \geq 1}$ such that $\overline{V}_n \subset V_{n+1}$ for all $n\geq 1$ and $\cup_{n\geq 1} V_n=E$. Then for $f\in L^1(E,\mu)$ with $f\geq 0$ $\mu$-a.e. $\overline{G}_\alpha^{h,V_n} f$ is increasing in $n$ and
$$
\overline{G}^h_\alpha f:=\lim_{n\rightarrow \infty} \overline{G}_\alpha ^{h,V_n} f
$$
exists $\mu$-a.e. and is independent of the choice of relatively compact open sets $(V_n)_{n \geq 1}$. For a general $f\in L^1(E,\mu)$, let $\overline{G}^h_\alpha f:= \overline{G}^h_\alpha f^+ - \overline{G}^h_\alpha f^-$. Then $(\overline{G}_\alpha^h)_{\alpha>0}$ is a sub-Markovian $C_0$-resolvent of contractions on $L^1(E,\mu)$ and its generator $(\overline{L}^h,D(\overline{L}^h))$ satisfies properties (a) and (b) of Lemma \ref{con1}.\\
Next, we show that $D(\overline{L}^h)=D(\overline{L}).$ By definition, if $u\in D(\overline{L})$, then there exists $f\in  L^1(E,\mu)$ such that
$$
u=\overline{G}_\alpha f= \lim_{n\rightarrow \infty} \overline{G}_\alpha ^{V_n} (f\cdot 1_{V_n})
$$
where $V_n \subset\subset E$, $\overline{V}_n\subset V_{n+1}$ and $V_n \nearrow E$. Without loss of generality, we may assume that $f\geq 0$. Since $V_n$ is relatively compact open for $n\geq 1$, $D(\overline{L}^{V_n})=D(\overline{L}^{h,V_n})$. So there exists a sequence of functions $(g_n)_{n\geq 1}\subset L^1(E,\mu)$ with supp($g_n)\subset \overline V_n$ such that
$$
\overline{G}_\alpha^{V_n} (f\cdot 1_{V_n})=\overline{G}_\alpha^{h,V_n}g_n
$$
for all $n\geq 1$. In particular, $(\alpha -\overline{L}^{h,V_n}) \overline{G}_\alpha^{V_n} (f\cdot 1_{V_n})=g_n$. Since $-\overline{L}^{h,V_n}u= -\overline{L}^{V_n} u +h\cdot u$ for any $u\in D(\overline{L}^{h,V_n})=D(\overline{L}^{V_n})$,
$$
g_n=f \cdot 1_{V_n} +h\cdot \overline{G}_\alpha ^{V_n}(f\cdot 1_{V_n})\geq 0.
$$
Since $\overline{G}_\alpha ^{V_n} (f \cdot 1_{V_n})$ is increasing in $n$ and converges to $\overline{G}_\alpha f$ $\mu$-a.e. and in $L^1(E,\mu)$, g:=$\lim_{n \rightarrow \infty} g_n=f+h\cdot \overline{G}_\alpha f$ exists $\mu$-a.e.\\
We claim that $\displaystyle\lim_{n \rightarrow \infty} \overline{G}_\alpha^{h,V_n}(g\cdot 1_{V_n})=\lim_{n \rightarrow \infty}\overline{G}_\alpha ^{h,V_n} g_n$. Since $\overline{G}_\alpha^{V_n}(f 1_{V_n})\leq \overline{G}_\alpha f$, we have $g 1_{V_n}\geq g_n$, hence $\overline{G}_\alpha^{h,V_n}(g\cdot 1_{V_n})\geq \overline{G}_\alpha^{h,V_n} g_n$ and
\begin{align*}
\|\overline{G}_\alpha^{h,V_n}(g\cdot 1_{V_n})-\overline{G}_\alpha ^{h,V_n}g_n\|_{L^1(\mu)} &=\| \overline{G}_\alpha ^{h,V_n}\Big[ 1_{V_n}h \Big(\overline{G}_\alpha f -\overline{G}_\alpha^{V_n}(f \cdot 1_{V_n})\Big) \Big] \|_{L^1(\mu)} \\
&\leq \frac{1}{\alpha} \|h\|_{L^\infty(\mu)} \| \overline{G}_\alpha f- \overline{G}_\alpha ^{V_n}(f \cdot 1_{V_n})\|_{L^1(\mu)}.
\end{align*}
Since $\lim_{n \rightarrow \infty} \overline{G}_\alpha ^{V_n} (f \cdot 1_{V_n})=\overline{G}_\alpha f $ in $L^1 (E,\mu)$ and $\lim_{n \rightarrow \infty} \overline{G}_\alpha ^{h,V_n}(g \cdot 1_{V_n})=\overline{G}_\alpha^h g$ in $L^1(E,\mu)$, we have $\lim_{n \rightarrow \infty} \overline{G}_\alpha^{h,V_n} g_n = \overline{G}_\alpha^h g $ in $L^1(E,\mu)$. Since $\overline{G}_\alpha^{h,V_n} g_n$ is increasing in $n$, it converges $\mu$-a.e. hence, $\lim_{n\rightarrow \infty} \overline{G}_\alpha^{h,V_n} g_n = \overline{G}_\alpha ^h g $ $\mu$-a.e. and in $L^1(E,\mu)$. Therefore
$$
\overline{G}_\alpha f=\lim_{n \rightarrow \infty} \overline{G}_\alpha ^{h,V_n}g_n =\lim_{n \rightarrow \infty} \overline{G}_\alpha ^{h,V_n}(g \cdot 1_{V_n}) =\overline{G}_\alpha^h g
$$
with $g=f+h\cdot \overline{G}_\alpha f$ and so $D(\overline{L}) \subset D(\overline{L}^h)$.\\
Likewise, if $u\in D(\overline{L}^h)$ such that $u=\overline{G}_\alpha^h f=\lim_{n \rightarrow \infty} \overline{G}_\alpha^{h,V_n}(f \cdot 1_{V_n})$ where $f\in L^1(E,\mu)$ with $f\geq 0$, then $u\in D(\overline{L})$ and $\overline{G}_\alpha^h f=\overline{G}_\alpha g$, where $g=f-h\cdot \overline{G}_\alpha^h f.$
\end{proof}
\centerline{}
\begin{proof}(of Lemma \ref{con2}) Consider the real Hilbert space $L^2(E,(h+\e)\mu)$. Then it is easy to see that $(\E^{0,V},D(\E^{0,V}))$ is a regular sectorial Dirichlet form on $L^2(V,(h+\e)\mu)$. Denote by $(L^{0,\e,V},D(L^{0,\e,V}))$ the $L^2(V,(h+\varepsilon)\mu)$-generator of $(\E^{0,V},D(\E^{0,V}))$ on $L^2(V,(h+\e)\mu)$. Then we can show $D(L^{0,\e,V})=D(L^{0,V})$ as before. Furthermore, we obtain:
\begin{itemize}
	\item[(f)] $L^{\e,V}u:=L^{0,\e,V}u+\langle B^\e,\nabla u \rangle,$ $u\in D(L^{0,\e,V})_b$ is closable on $L^1(V,(h+\e)\mu).$ The closure $(\overline{L}^{\e,V},D(\overline{L}^{\e,V}))$ generates a sub-Markovian $C_0$-resolvent of contractions $(\overline{G}_\alpha^{\e,V})_{\alpha > 0}$.
	\item[(g)] $D(\overline{L}^{\e,V})_b \subset D(\E^{0,\e,V})$ and for $u\in D(\overline{L}^{\e,V})_b$, $v\in D(\E^{0,\e,V})_b$, we have\\
$$
\E^0(u,v)-\int_V \langle B^\e,\nabla u \rangle v (h+\e)d\mu=-\int_V \overline{L}^{\e,V} uv (h+\e)d\mu
$$
and
$$
\E^0(u,u)=-\int_V \overline{L}^{\e,V}uu (h+\e)d\mu.
$$
	\item[(h)] $D(\overline{L}^{\e,V})=D(\overline{L}^V)$ and for $u\in D(\overline{L}^{\e,V})$,
$$
\overline{L}^{\e,V}u=\frac{1}{h+\e}\overline{L}^V u.
$$
\end{itemize}
If we define for $f\in L^1(E,(h+\e)\mu),$ $\alpha>0$,
$$
\overline{G}_\alpha^{\e,V}f:=\overline{G}_\alpha^{\e,V}(f \cdot 1_{V}),
$$
then $(\overline{G}_\alpha^{\e,V})_{\alpha>0}$ can be extended to a sub-Markovian $C_0$-resolvent of contractions on $L^1(E,(h+\e)\mu)$. As in the $(\overline{G}_\alpha)_{\alpha>0}$ case, choose relatively compact open sets $(V_n)_{n \geq 1}$ such that $\overline{V}_n \subset V_{n+1}$ for all $n\geq 1$ and $\cup_{n\geq 1} V_n=E$. Then for $f\in L^1(E,(h+\e)\mu)$ with $f\geq 0$ $\mu$-a.e., $\overline{G}_\alpha^{\e,V_n} f$ is increasing in $n$ and
$$
\overline{G}^\e_\alpha f:=\lim_{n\rightarrow \infty} \overline{G}_\alpha ^{\e,V_n}f
$$
exists $\mu$-a.e. and is independent of the choice of relatively compact open sets $(V_n)_{n \geq 1}$. For a general $f\in L^1(E,(h+\e)\mu)$, let $\overline{G}^\e_\alpha f:= \overline{G}^\e_\alpha f^+ - \overline{G}^\e_\alpha f^-$. Then $(\overline{G}_\alpha^\e)_{\alpha>0}$ is a sub-Markovian $C_0$-resolvent of contractions on $L^1(E,(h+\e)\mu)$ and its generator $(\overline{L}^\e,D(\overline{L}^\e))$ satisfies properties (a) and (b) of Lemma \ref{con2}.
\centerline{}
Next we show $D(\overline{L}^\e)=D(\overline{L})$. If $u\in D(\overline{L}^\e)$, then there exists $f\in L^1(E,(h+\e)\mu)$ such that $u=\overline{G}_\alpha^\e f= \lim_{n\rightarrow \infty} \overline{G}_\alpha ^{\e,V_n}(f \cdot 1_{V_n})$ where $V_n \subset \subset E$ and $V_n \nearrow E$. Without loss of generality, we may assume that $f\geq 0$. Since $V_n$ is relatively compact open in $E$ for all $n\geq 1$, $D(\overline{L}^{\e,V_n})=D(\overline{L}^{V_n})$. So there exists a sequence of functions $(g_n)_{n\geq 1}\subset L^1(E,\mu)$ with supp$(g_n)\subset \overline V_n$ such that
$$
\overline{G}_\alpha^{\e,V_n}(f \cdot 1_{V_n})= \overline{G}_\alpha^{V_n} g_n
$$
for all $n\geq 1$. So we have $(\alpha - \overline{L}^{V_n})\overline{G}_\alpha^{\e,V_n}(f \cdot 1_{V_n})=g_n$. Since $\overline{L}^{\e,V_n}u=\frac{1}{h+\e}\overline{L}^{V_n}u$ for $u\in D(\overline{L}^{V_n})=D(\overline{L}^{\e,V_n})$, $g_n=(h+\e)f \cdot 1_{V_n} +\alpha(1-(h+\e))\overline{G}_\alpha^{\e,V_n}(f \cdot 1_{V_n})$. Since $\lim_{n\rightarrow \infty} \overline{G}_\alpha^{\e,V_n}(f \cdot 1_{V_n})=\overline{G}_\alpha^\e f$ $(h+\e)\mu$-a.e. and in $L^1(E,(h+\e)\mu)$, $g:=\lim_{n \rightarrow \infty}g_n =(h+\e)f+\alpha(1-(h+\e))\overline{G}_\alpha^\e f$ exists $\mu$-a.e.\\
We claim that $\lim_{n\rightarrow \infty} \overline{G}_\alpha^{V_n} g_n = \lim_{n\rightarrow \infty} \overline{G}_\alpha^{V_n}(g \cdot 1_{V_n})$.\\
Indeed,
\begin{align*}
\| \overline{G}_\alpha^{V_n}(g \cdot 1_{V_n})-\overline{G}_\alpha^{V_n}g_n \|_{L^1(\mu)} &\leq (1+ \|h\|_{L^\infty(\mu)}+\e)\cdot \| \overline{G}_\alpha^\e f-\overline{G}_\alpha^{\e,V_n}(f \cdot 1_{V_n})\| _{L^1(\mu)} \\
&\leq \frac{1}{\e}(1+ \|h\|_{L^\infty(\mu)}+\e) \cdot \| \overline{G}_\alpha^\e f-\overline{G}_\alpha^{\e,V_n}(f \cdot 1_{V_n})\| _{L^1((h+\e)\mu)},
\end{align*}
and since $\lim_{n \rightarrow \infty} \overline{G}_\alpha ^{\e,V_n}(f \cdot 1_{V_n})=\overline{G}_\alpha^\e f $ in $L^1(E,(h+\e)\mu)$ and $\lim_{n \rightarrow \infty} \overline{G}^{V_n}_\alpha (g \cdot 1_{V_n})=\overline{G}_\alpha g$ in $L^1(E,\mu)$, we have $\lim_{n \rightarrow \infty} \overline{G}^{V_n}_\alpha g _n=\overline{G}_\alpha g$ in $L^1(E,\mu)$. Since $\overline{G}_\alpha^{V_n} g_n $ is increasing in $n$, it converges $\mu$-a.e. moreover $\lim_{n \rightarrow \infty} \overline{G}^{V_n}_\alpha g_n=\overline{G}_\alpha g$ $\mu$-a.e. and in $L^1(E,\mu)$. Therefore,
$$
\overline{G}^\e_\alpha f=\lim_{n \rightarrow \infty}\overline{G}_\alpha^{\e,V_n}(f \cdot 1_{V_n}) =\lim_{n \rightarrow \infty}\overline{G}_\alpha^{V_n}(g \cdot 1_{V_n})= \overline{G}_\alpha g.
$$
Likewise, we can show converse that if $f\in D(\overline{L}),$ then there exists a function $g\in L^1(E,(h+\e)\mu)$ such that $\overline{G}_\alpha f= \overline{G}_\alpha^\e g.$
\end{proof}
\centerline{}
{\bf Acknowledgments.} The second named author would like to thank Wilhelm Stannat for leaving him his notes on recurrence for personal use several years ago. The research of Minjung Gim was supported by Global PH.D Fellowship Program through the National Research Foundation of Korea(NRF) funded by the Ministry of Education (2012-H1A2A1004352).
The research of Gerald Trutnau was supported by NRF-DFG Collaborative Research program and Basic Science Research Program through the National Research Foundation of Korea (NRF-2012K2A5A6047864 and NRF-2012R1A1A2006987) and by DFG through Grant Ro 1195/10-1.

\bigskip
\noindent
{\it Gerald Trutnau\\Department of Mathematical Sciences and\\ 
Research Institute of Mathematics of Seoul National University\\
599 Gwanak-Ro, Gwanak-Gu, Seoul 08826, South Korea\\
E-mail: trutnau@snu.ac.kr\\ \\
Minjung Gim \\
National Institute for Mathematical Sciences\\ 
70 Yuseong-daero 1689 beon-gil, Yuseong-gu, Daejeon 34047, South Korea\\
E-mail: mjgim@nims.re.kr}

\end{document}